\newcommand{\QQ}{\mathbb Q}
\newcommand{\ZZ}{\mathbb Z}
\newcommand{\RR}{\mathbb R}
\newcommand{\BB}{\mathbb B}
\newcommand{\LL}{\mathcal{L}}
\newcommand{\CC}{\mathbb C}
\newcommand{\Qp}{\QQ_p}
\newcommand{\Zp}{\ZZ_p}
\newcommand{\calO}{\mathcal{O}}
\newcommand{\DD}{\mathbb D}
\newcommand{\HH}{\mathcal{H}}
\newcommand{\vp}{\varphi}
\newcommand{\Qpi}{\mathbb{Q}_{p,\infty}}
\newcommand{\Hpm}{\mathbb{H}^1_{\mathrm{Iw},\pm}}
\newcommand{\ff}{\mathfrak{f}}
\newcommand{\K}{\mathcal{K}}
\newcommand{\U}{\mathcal{U}}
\newcommand{\X}{\mathcal{X}}
\newcommand{\C}{\mathcal{C}}
\newcommand{\V}{\mathcal{V}}
\newcommand{\E}{\mathcal{E}}
\newcommand{\Hh}{\mathbb{H}}
\newcommand{\HIw}{\mathbb{H}^1_{\Iw}}
\newcommand{\Qpn}{\mathbb{Q}_{p,n}}
\newcommand{\kato}{{\bf z}^{\rm Kato}}
\newcommand{\OO}{\mathcal{O}}
\DeclareMathOperator{\cris}{cris}
\DeclareMathOperator{\Char}{Char}
\DeclareMathOperator{\cor}{cor}
\DeclareMathOperator{\Sel}{Sel}
\DeclareMathOperator{\Tw}{Tw}
\DeclareMathOperator{\col}{Col}
\DeclareMathOperator{\Iw}{Iw}
\DeclareMathOperator{\Gal}{Gal}
\DeclareMathOperator{\image}{Im}
\DeclareMathOperator{\id}{id}
\DeclareMathOperator{\Fr}{Fr}
\DeclareMathOperator{\Tr}{Tr}
\DeclareMathOperator{\Hom}{Hom}
\newtheorem{theorem}{Theorem}[section]
\newtheorem{proposition}[theorem]{Proposition}
\newtheorem{lemma}[theorem]{Lemma}
\newtheorem{corollary}[theorem]{Corollary}
\newtheorem{remark}[theorem]{Remark}
\newtheorem{conjecture}[theorem]{Conjecture}
\begin{document}
\title{Iwasawa Theory for Modular Forms at Supersingular Primes}
\author{Antonio Lei}

\begin{abstract}
We generalise works of Kobayashi to give a formulation of the Iwasawa main conjecture for modular forms at supersingular primes. In particular, we give analogous definitions of the plus and minus Coleman maps for normalised new forms of arbitrary weights and relate Pollack's $p$-adic $L$-functions to the plus and minus Selmer groups. In addition, by generalising works of Pollack and Rubin on CM elliptic curves, we prove the ``main conjecture" for CM modular forms. 
\end{abstract}
\email{aifl2@cam.ac.uk}
\classification{11R23; 11F11}
\keywords{Modular form; supersingular prime; Iwasawa theory; CM form}
\thanks{The author is supported by Trinity College, Cambridge.}
\address{Department of Pure Mathematics and Mathematical Statistics\\University of Cambridge\\
Wilberforce Road\\Cambridge CB3 0WB\\United Kingdom}
\maketitle


\section{Introduction}

The Taniyama-Shimura conjecture, proved by Wiles et al, asserts that elliptic curves over $\QQ$ correspond to modular forms of weight $2$. Therefore, it is natural to ask which results on elliptic curves can be generalised to modular forms of higher weights. In this paper, we discuss how this can be done for some recent results on supersingular primes.

Let $p$ be an odd prime and let $G_\infty$ be the Galois group of the extension $k_\infty$ of $\QQ$ by $p$ power roots of unity. We denote by $\Lambda(G_\infty)$ the Iwasawa algebra of $G_\infty$ over $\Zp$. If $\Delta$ denotes the torsion subgroup of $G_\infty$ and $\gamma$ is a fixed topological generator of the $\Zp$-part of $G_\infty$, then $\Lambda(G_\infty)\cong\Zp[\Delta][[\gamma-1]]$.

Let $f=\sum a_nq^n$ be a normalised eigen-newform of weight $k \ge 2$, level $N$ and character $\epsilon$. For notational simplicity, we assume $a_p\in\ZZ$ throughout the introduction. We fix $p$ so that $p\nmid N$. Kato \cite{kato04} has formulated a main conjecture relating an Euler system (to which we refer as Kato zeta element) to some cohomological group over $k_\infty$ (see Section~\ref{katoreview} for a brief review).

If $\alpha$ is a root of $X^2-a_pX+\epsilon(p)p^{k-1}$ such that $v_p(\alpha)<k-1$ where $v_p$ is the $p$-adic valuation of $\CC_p$ with $v_p(p)=1$, then there exists a $p$-adic $L$-function $L_{p,\alpha}$ interpolating complex $L$-values of $f$. When $f$ is ordinary at $p$ (i.e. $a_p$ is a $p$-adic unit) and $\alpha$ is the unique unit root of the quadratic above, $L_{p,\alpha}$ lies inside $\QQ\otimes\Lambda(G_\infty)$ and the $p$-Selmer group $\Sel_p(f/k_\infty)$ of $f$ over $k_\infty$ is $\Lambda(G_\infty)$-torsion, i.e. its Pontryagin dual 
\[
\Sel_p(f/k_\infty)^\vee=\Hom_{\rm cts}\left(\Sel_p(f/k_\infty),\Qp/\Zp\right)
\]
is $\Lambda(G_\infty)$-torsion. If $\theta$ is a character on $\Delta$, the $\theta$-isotypical component of $\Sel_p(f/k_\infty)^\vee$ is $\Zp[[\gamma-1]]$-torsion. We can associate to it a characteristic ideal. Kato's main conjecture is equivalent to asserting that this ideal is generated by the $\theta$-component of $L_{p,\alpha}$ (written as $L_{p,\alpha}^\theta$), i.e. there is a pseudo-isomorphism (a homomorphism with finite kernel and cokernel)
\[
\Sel_p(f/k_\infty)^{\vee,\theta}\rightarrow\prod_{i=1}^r\Zp[[\gamma-1]]/(x_i)
\]
for some $x_i\in\Zp[[\gamma-1]]$ such that $x_1\cdots x_r=L_{p,\alpha}^\theta$.

When $f$ is supersingular at $p$ (i.e. $p|a_p$), the $p$-adic $L$-functions of $f$ as given above are not in $\QQ\otimes\Lambda(G_\infty)$ and $\Sel_p(f/k_\infty)$ is not $\Lambda(G_\infty)$-cotorsion (see Section~\ref{nottorsion}). Therefore, Kato's main conjecture cannot be reformulated in the same way as the oridnary case. 

In recent years, much progress has been made on supersingular primes. When $a_p=0$, Pollack \cite{pollack03} has defined the plus and minus $p$-adic $L$-functions $L_p^{\pm}$ which have bounded coefficients. In \cite{kobayashi03}, again assuming $a_p=0$, Kobayashi defined the plus and minus Selmer groups $\Sel_p^\pm$ for the case when $f$ corresponds to an elliptic curve $\E$ over $\QQ$ and proved that $\Sel_p^\pm(\E/k_\infty)$ are $\Lambda(G_\infty)$-cotorsion. It is then possible to reformulate Kato's main conjecture as follows.

\begin{conjecture}\label{main}
Let $\theta$ be a character on $\Delta$. Under the notation above, the characteristic ideal of $\Sel_p^\pm(\E/k_\infty)^{\vee,\theta}$ is generated by $L_p^{\pm,\theta}$.
\end{conjecture}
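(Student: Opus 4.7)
The plan is to follow Kobayashi's strategy from \cite{kobayashi03} and invoke the Pollack--Rubin argument for the CM subcase. The first step is to establish that the plus/minus Coleman maps $\col^\pm$ developed earlier in the paper send the localisation at $p$ of Kato's zeta element $\kato$ to Pollack's $p$-adic $L$-functions $L_p^\pm$, up to units in $\Lambda(G_\infty)$. This identification is forced by comparing interpolation properties on both sides, using Pollack's decomposition $L_{p,\alpha}=L_p^+\log_p^++L_p^-\log_p^-$ together with Kato's formula relating $\kato$ to complex $L$-values of $f$ via the dual exponential map.

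Given this, one divisibility, namely $L_p^{\pm,\theta}\mid\Char\bigl(\Sel_p^\pm(\E/k_\infty)^{\vee,\theta}\bigr)$, should follow by running Kato's Euler system argument in the plus/minus setting. Working one $\theta$-component at a time, I would define $\Sel_p^\pm$ as the kernel of a localisation map composed with $\col^\pm$; since $\col^\pm(\kato)=L_p^{\pm}$, the image of the submodule of $\HIw^1$ generated by $\kato$ inside the relevant quotient has index measured by $L_p^{\pm,\theta}$, and Kato's machinery then bounds $\Sel_p^\pm(\E/k_\infty)^{\vee,\theta}$ by this quantity, exactly as in the ordinary case.

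The reverse divisibility is the deeper half. For CM elliptic curves, the Galois representation is induced from a Hecke character $\psi$ of an imaginary quadratic field $K$ in which $p$ is inert, which is precisely what makes $f$ supersingular. Decomposing under $\Gal(K/\QQ)$, the plus/minus Selmer group should split into pieces controlled by the two-variable Iwasawa theory over $K(\mu_{p^\infty})/K$, and $L_p^\pm$ should be expressible in terms of Katz's two-variable $p$-adic $L$-function restricted to the cyclotomic line. Rubin's two-variable main conjecture for $K$, combined with a descent argument, then yields the reverse divisibility, in the same way Pollack and Rubin treat CM elliptic curves.

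The main obstacle is the compatibility between the plus/minus cyclotomic construction, which is intrinsic to the cyclotomic $\ZZ_p$-tower, and the CM decomposition coming from $K$. Pollack and Rubin handle this by an explicit factorisation of the two-variable $L$-function along the anticyclotomic direction; the analogue required here is more delicate because of the higher weight, and will demand careful tracking of interpolation constants and of signs $\pm$ on both sides, together with a verification that the descent from two-variable to one-variable characteristic ideals does not introduce spurious factors at each $\theta$-isotypic component. Without the CM hypothesis, neither divisibility-half of the argument has an obvious replacement, so the general case is expected to remain conjectural.
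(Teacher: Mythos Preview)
The statement you are addressing is labelled a \emph{conjecture} in the paper, not a theorem, and the paper gives no proof of it. Conjecture~\ref{main} is Kobayashi's reformulation for elliptic curves; the paper merely records that one inclusion follows from Kato under a big-image hypothesis and that the other was proved in the CM case by Pollack--Rubin, then moves on to formulate and partially establish the higher-weight analogue (Conjecture~\ref{main2}). So there is no ``paper's own proof'' to compare against, and your proposal should be read as a sketch of the known partial results rather than as a proof of the conjecture.

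With that caveat, your outline contains a concrete error of direction. You claim that Kato's Euler system yields
\[
L_p^{\pm,\theta}\ \Big|\ \Char\bigl(\Sel_p^\pm(\E/k_\infty)^{\vee,\theta}\bigr),
\]
but Euler systems bound Selmer groups from \emph{above}: what one actually obtains (and what the paper states in the introduction and again in the remark following Corollary~\ref{ourMC}) is the containment $(L_p^{\pm,\theta})\subset\Char(\Sel_p^\pm(\E/k_\infty)^{\vee,\theta})$, i.e.\ the characteristic ideal divides $L_p^{\pm,\theta}$. Your own subsequent sentence (``Kato's machinery then bounds $\Sel_p^\pm$ by this quantity'') has the correct intuition but contradicts the divisibility you wrote. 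Consequently your final remark, that ``neither divisibility-half of the argument has an obvious replacement'' without CM, is also off: the Kato inclusion does not require CM at all (only a surjective Galois image, or working over $\Lambda_E(\Gamma)$); it is solely the \emph{reverse} inclusion that, at the time of the paper, is available only via the Pollack--Rubin CM argument.
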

One inclusion of conjecture~\ref{main}, namely $L_p^{\pm,\theta}$ does lie inside the said characteristic ideal, follows from that of Kato's main conjecture under some assumptions. For the CM case, the other inclusion has been proved by Pollack and Rubin in \cite{pollackrubin04}, using the theory of imaginary quadratic fields and elliptic units.

We now explain how $\Sel_p^\pm(\E/k_\infty)$ is defined. Let $\mu_{p^n}$ be the set of $p^n$th roots of unity. The idea of Kobayashi is to define subgroups $\E^\pm(\Qp(\mu_{p^n}))$ of $\E(\Qp(\mu_{p^n}))$ which can be identified with its image in $H^1(\Qp(\mu_{p^n}),\E[p^\infty])$ under the Kummer map. The $\pm$-Selmer groups over $\QQ(\mu_{p^n})$ is defined to be
\[
\Sel_p^\pm(\E/\QQ(\mu_{p^n}))=\ker\left(\Sel_p(\E/\QQ(\mu_{p^n}))\rightarrow\frac{H^1(\Qp(\mu_{p^n}),\E[p^\infty])}{\E^\pm(\Qp(\mu_{p^n}))\otimes\Qp/\ZZ_p}\right).
\] 
Then, $\Sel_p^\pm(\E/k_\infty)$ is defined to be the direct limit of $\Sel_p^\pm(\E/\QQ(\mu_{p^n}))$.

On the one hand, $\E[p^\infty]$ gives a $p$-adic representation of $\Gal(\bar{\QQ}/\QQ)$ and one can define analogous representations for arbitrary modular forms (see \cite{deligne69} for details). On the other hand, the Kummer image of $\E(\Qp(\mu_{p^n}))$ can be identified with the so-called finite cohomology subgroup $H^1_f$ defined in \cite{blochkato}. Therefore, we can give a definition of $\Sel^\pm(f/k_\infty)$ analogously for any modular forms without much difficulty.  

To show that $\Sel_p^\pm(\E/k_\infty)$ is $\Lambda(G_\infty)$-cotorsion, Kobayashi constructed the $\pm$-Coleman maps
\[
\col^\pm:\lim_{\leftarrow}H^1(\Qp(\mu_{p^n}),T_p(\E))\rightarrow\Lambda(G_\infty)
\]
where $T_p(\E)$ denotes the Tate module of $\E$ at $p$. In particular, $\col^\pm$ send the Kato zeta element from \cite{kato04} to $L_p^\pm$ respectively. By applying the Poitou-Tate exact sequence, he then showed that the Pontryagin dual of $\Sel_p^\pm(\E/k_\infty)$ is killed by $L_p^\pm\ne0$, hence $\Lambda(G_\infty)$-cotorsion. 

We follow this strategy to show that $\Sel_p^\pm(f/k_\infty)$ are $\Lambda(G_\infty)$-cotorsion for $f$ of any weights $k\ge2$. Although the Coleman maps in \cite{kobayashi03} are defined using formal groups, they can in fact be obtained from Perrin-Riou's exponential map defined in \cite{perrinriou94}. We make use of this and observe that there is a divisibility phenomenon, similar to that used in the construction of $L_p^\pm$ in \cite{pollack03}. This enables us to construct analogous $\pm$-Coleman maps for general $f$. Although we do not need any restrictions on $p$ to define them, we assume that $p+1\nmid k-1$ in order to describe their kernels, which are related to the local conditions in the definition of $\Sel_p^\pm$. We then formulate a main conjecture as follows.

\begin{conjecture}\label{main2}
Let $f$ and $\theta$ be as above. There exists $n^\pm\in\ZZ$ such that the characteristic ideal of $\Sel_p^\pm(f/k_\infty)^{\vee,\theta}$ is generated by $p^{n^\pm}L_p^{\pm,\theta}$.
\end{conjecture}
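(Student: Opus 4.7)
The plan is to follow Kobayashi's strategy in \cite{kobayashi03}, using the $\pm$-Coleman maps $\col^\pm\colon \HIw \to \Lambda(G_\infty)$ constructed (as discussed above) from Perrin-Riou's exponential map. The essential outputs are that after identifying the kernels of $\col^\pm$ with the local $\pm$-conditions appearing in the definition of $\Sel_p^\pm$---this is where the assumption $p+1\nmid k-1$ enters---$\col^\pm$ sends the Kato zeta element $\kato$ to $u\cdot p^{n^\pm}L_p^{\pm}$ for a unit $u\in\Lambda(G_\infty)^\times$ and some integers $n^\pm$. The divisibility by a power of $p$ is the phenomenon already observed in the construction of $L_p^\pm$ in \cite{pollack03} and is exactly what forces the factor $p^{n^\pm}$ in the statement of the conjecture. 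Combining $\col^\pm$ with the Poitou-Tate exact sequence as in \cite{kobayashi03} should then yield the $\Lambda(G_\infty)$-cotorsion of $\Sel_p^\pm(f/k_\infty)$ and reduce the conjecture to a comparison of two ideals inside $\Lambda(G_\infty)$.

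For one inclusion, namely $(p^{n^\pm}L_p^{\pm,\theta}) \subseteq \Char\bigl(\Sel_p^\pm(f/k_\infty)^{\vee,\theta}\bigr)$, I would invoke the divisibility half of Kato's main conjecture, known unconditionally (under the standard hypotheses on the image of Galois) from his Euler system. Transporting this through $\col^\pm$ and the Poitou-Tate diagram, in the style of \cite{kobayashi03}, gives the containment. This is the easier half and is essentially automatic once the Coleman-map framework is in place.

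For the reverse inclusion the conjecture appears out of reach in general, but in the CM case I would adapt Pollack-Rubin \cite{pollackrubin04}. When $f$ has CM by an imaginary quadratic field $K$ in which $p$ splits, its Galois representation is induced from a Hecke character of $K$, and Rubin's two-variable main conjecture over the $\ZZ_p^2$-extension of $K$---proved using elliptic units---provides the required equality upstairs. Restricting to the cyclotomic line and matching Pollack's $L_p^\pm$ with the appropriate projections of the two-variable $p$-adic $L$-function should then yield the conjecture. The hard part, I expect, will be the higher-weight adaptation: Pollack-Rubin is written for the weight-two case of a CM elliptic curve and its formal group, whereas here the representation is a higher-weight motive, so one must redo the Coleman-power-series and elliptic-unit calculations in the Lubin-Tate setting attached to weight $k$ and carefully track how the integer $n^\pm$ propagates through the specialisation and descent maps.
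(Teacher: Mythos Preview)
Your overall architecture --- build $\col^\pm$ from Perrin-Riou's exponential, identify $\ker(\col^\pm)$ with the local $\pm$-conditions, feed this into Poitou--Tate to get cotorsionness, and pull one inclusion back from Kato's Euler-system divisibility --- is exactly what the paper does, and your outline there is fine.

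Two points need correction.

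First, a conceptual slip about the origin of $n^\pm$. In the paper one has $\col^\pm(\kato)=L_p^\pm$ on the nose; the power of the uniformiser does \emph{not} appear in the image of the Kato element. Rather, it arises because the image $\col^\pm(\HIw(T_{\bar f}(k-1)))^\theta$ is only known to equal $\varpi^{s^\pm}\Lambda_{\calO_E}(\Gamma)$ for some integer $s^\pm$, not all of $\Lambda_{\calO_E}(\Gamma)$. The exact sequence coming from Poitou--Tate then compares $\Sel_p^\pm(f/k_\infty)^{\vee,\theta}$ with $\image(\col^{\pm,\theta})/(L_p^{\pm,\theta})$, and that is where $\varpi^{-s^\pm}$ enters. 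So the integer $n^\pm$ is a statement about the integral image of the Coleman map, not about a unit discrepancy between $\col^\pm(\kato)$ and $L_p^\pm$.

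Second, and more seriously, your CM plan is set up in the wrong local regime. You write ``when $f$ has CM by an imaginary quadratic field $K$ in which $p$ splits''. But if $p$ splits in $K$ then $a_p$ is a $p$-adic unit and $f$ is ordinary at $p$; the entire supersingular framework (Pollack's $\log_{p,k}^\pm$, the $\pm$-Selmer groups, the conjecture itself) is vacuous there. For a CM form, $a_p=0$ forces $p$ to be \emph{inert} in $K$. This changes the local picture completely: there is a single prime of $K$ above $p$ with residue degree $2$, and the relevant local Iwasawa module is governed by the anticyclotomic/cyclotomic $\Zp^2$-extension via Rubin's results on elliptic units in the inert case. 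The paper follows Pollack--Rubin in this inert setting: one identifies $\Sel_p(f/K_c)$ with $\Hom_{\calO}(\X^\rho_{K_c},K_p/\calO_p)$, proves an explicit reciprocity law relating the elliptic-unit generator of $\C^\rho_{K_m}$ to $L$-values via the dual exponential, and then computes the resulting element $h^\pm$ directly to match $p^{-s^\pm}L_p^\pm$. Your proposed route through ``Rubin's two-variable main conjecture in the split case and restriction to the cyclotomic line'' does not apply here; you would need to redo the argument with the inert local theory from the start.
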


As in the case of elliptic curves, Conjecture~\ref{main2} is equivalent to Kato's main conjecture and one inclusion holds.

It has to be pointed out that we are assuming $a_p=0$ as in \cite{kobayashi03} and \cite{pollack03}. Since $|a_p|\le2p^{(k-1)/2}$ (due to Deligne), $a_p$ is always zero when $p>3$ when $f$ corresponds to an elliptic curve. When $k>2$, the assumption is much stronger, although if $f$ is a CM modular form, $a_p=0$ for any supersingular primes $p$ (see Section~\ref{CM}). More recently, Sprung \cite{sprung09} has generalised works of Kobayashi to the case $a_p\ne0$ for elliptic curves over $\QQ$. It would be desirable to know whether this can be done for modular forms of higher weights as well.

The layout of this paper is as follows. We fix some notation and review some basic properties in Section~\ref{not}. In Section~\ref{ap0}, we first review some of the main results which we need from \cite{perrinriou94} and \cite{kato04}. We then construct the $\pm$-Coleman maps. The kernels of these maps are worked out explicitly in Section~\ref{kernel} and their images are described in Section~\ref{images}. Following \cite{kobayashi03}, we define $\Sel_p^\pm$ in Section~\ref{selmer}. We show that they are $\Lambda(G_\infty)$-cotorsion which enables us to formulate the ``main conjecture" for which one inclusion of the conjecture is shown. Finally, in Section~\ref{CM}, the other inclusion is proved in the case of CM modular forms over $\QQ$, following the strategy of \cite{pollackrubin04}.

\acknowledgements{The author would like to thank Prof Tony Scholl for suggesting the study of this topic and his patient guidance and tremendous help. The author is also indebted to Alex Bartel, Tobias Berger, Michael Fester and Byoung Du Kim for the very helpful discussions. Finally, the author is extremely grateful to the anonymous referees for their very useful comments and suggestions.}


 \section{Background}\label{not}

In this section, we fix some notation which is used throughout the paper. We also state some basic properties of some of the objects which we study.

\subsection{Extensions by $p$ power roots of unity} 
   Throughout this paper, $p$ is an odd prime. If $K$ is a field of characteristic $0$, either local or global, $G_K$ denotes its absolute Galois group, $\chi$ the $p$-cyclotomic character on $G_K$ and $\calO_K$ the ring of integers of $K$. For an integer $n\ge0$, we write $K_n$ for the extension $K(\mu_{p^n})$ where $\mu_{p^n}$ is the set of $p^n$th roots of unity and $K_\infty$ denotes $\cup_{n\ge1} K_n$. The $\Zp$-cyclotomic extension of $K$ is denoted by $K_c$ and $K^{(n)}$ denotes the $p^n$-subextension inside $K_c$. 

In particular, we write $\Qpn=\Qp(\mu_{p^n})$. For $n\ge m$, we write $\Tr_{n/m}$ for the trace map from $\Qpn$ to $\QQ_{p,m}$. For each $n$, we fix a primitive $p^n$th root of unity such that $\zeta_{p^n}^p=\zeta_{p^{n-1}}$. Let $G_n$ denote the Galois group $\Gal(\Qpn/\Qp)$ for $0\le n \le\infty$. Then, $G_\infty\cong\Delta\times\Gamma$ where $\Delta=G_1$ is a finite group of order $p-1$ and $\Gamma=\Gal(\QQ_{p,\infty}/\QQ_{p,1})\cong\Zp$. We fix a topological generator $\gamma$ of $\Gamma$ and write $u=\chi(\gamma)$. In particular, $u$ is a topological generator of $1+p\Zp$.


\subsection{Iwasawa algebras and power series}
 Given a finite extension $K$ of $\Qp$, $\Lambda_{\calO_K}(G_\infty)$ (respectively $\Lambda_{\calO_K}(\Gamma)$) denotes the Iwasawa algebra of $G_\infty$ (respectively $\Gamma$) over $\calO_K$. We write $\Lambda_K(G_\infty)=\Lambda_{\calO_K}(G_\infty)\otimes K$ and $\Lambda_K(\Gamma)=\Lambda_{\calO_K}(\Gamma)\otimes K$. When $K=\Qp$ (so $\calO_K=\Zp$), we simply write $\Lambda$ for $\Lambda_{\Zp}$. If $M$ is a finitely generated $\Lambda_{\calO_K}(\Gamma)$-torsion (respectively $\Lambda_K(\Gamma)$-torsion) module, we write $\Char_{\Lambda_{\calO_K}(\Gamma)}(M)$ (respectively $\Char_{\Lambda_K(\Gamma)}(M)$) for its characteristic ideal.

Given a module $M$ over $\Lambda_{\calO_K}(G_\infty)$ (respectively $\Lambda_K(G_\infty)$) and a character $\delta:\Delta\rightarrow\Zp^\times$, $M^\delta$ denotes the $\delta$-isotypical component of $M$. For any $m\in M$, we write $m^\delta$ for the projection of $m$ into $M^\delta$. The Pontryagin dual of $M$ is written as $M^\vee$.

Let $r\in\RR_{\ge0}$. We define
\[
\HH_r=\left\{\sum_{n\geq0,\sigma\in\Delta}c_{n,\sigma}\cdot\sigma\cdot X^n\in\CC_p[\Delta][[X]]:\sup_{n}\frac{|c_{n,\sigma}|_p}{n^r}<\infty\ \forall\sigma\in\Delta\right\}
\]
where $|\cdot|_p$ is the $p$-adic norm on $\CC_p$ such that $|p|_p=p^{-1}$ (the corresponding valuation is written as $v_p$). We write $\HH_\infty=\cup_{r\ge0}\HH_r$ and
$\HH_r(G_\infty)=\{f(\gamma-1):f\in\HH_r\}$
 for $r\in\RR_{\ge0}\cup\{\infty\}$. In other words, the elements of $\HH_r$ (respectively $\HH_r(G_\infty)$) are the power series in $X$ (respectively $\gamma-1$) over $\CC_p[\Delta]$ with growth rate $O(\log_p^r)$. If $F,G\in\HH_\infty$ or $\HH_\infty(G_\infty)$ are such that $F=O(G)$ and $G=O(F)$, we write $F\sim G$.

Given a subfield $K$ of $\CC_p$, we write $\HH_{r,K}=\HH_r\cap K[\Delta][[X]]$ and similarly for $\HH_{r,K}(G_\infty)$. In particular, $\HH_{0,K}(G_\infty)=\Lambda_{K}(G_\infty)$. Moreover, we have three operators $\vp$, $\partial$ and $\psi$ on $\HH_{r,K}$ defined by
\[
\vp(f)=f((1+X)^p-1),\quad\partial f=(1+X)\frac{df}{dX}\quad\text{and}\quad\psi(f)=\sum_{\zeta^p=1}f(\zeta(1+X)-1).
\]


\subsection{Crystalline representations}
We write $\BB_{\cris}$ and $\BB_{\rm dR}$ for the rings of Fontaine and $\vp$ for the Frobenius acting on these rings. Recall that there exists an element $t\in\BB_{\rm dR}$ such that $\vp(t)=pt$ and $g\cdot t=\chi(g)t$ for $g\in G_{\Qp}$.

Let $V$ be a $p$-adic representation of $G_{\Qp}$ which is crystalline.  We denote the Dieudonn\'{e} module by $\DD(V)=\DD_{\cris}(V)=(\BB_{\cris}\otimes V)^{G_{\Qp}}$. If $j\in\ZZ$, $\DD^j(V)$ denotes the $j$th de Rham filtration of $\DD(V)$. 

We write $\DD_\infty(V)=\HH_{0,\Qp}^{\psi=0}\otimes\DD(V)$, which is contained in $\HH_{\infty,\Qp}\otimes\DD(V)$. The map $\vp\otimes\vp$ on $\HH_{\infty,\Qp}\otimes\DD(V)$ is simply written as $\vp$ and the map $\partial\otimes1$ is written as $\partial$. Note that $\partial$ acts on $\DD_\infty(V)$ bijectively, so $\partial^j$ makes sense for any $j\in\ZZ$.

Let $T$ be a lattice of $V$ which is stable under $G_{\Qp}$. For integers $m\ge n$, we write $\cor_{m/n}$ for the corestriction map $H^1(\QQ_{p,m},A)\rightarrow H^1(\Qpn,A)$ where $A=V$ or $T$. Let $\HIw(T)$ denote the inverse limit $\displaystyle\lim_{\leftarrow}H^1(\Qpn,T)$ with respect to the corestriction and $\HIw(V)=\QQ\otimes\HIw(T)$. Moreover, if $V$ arises from the restriction of a $p$-adic representation of $G_\QQ$ and $T$ is a lattice stable under $G_\QQ$, we write
\begin{eqnarray*}
\Hh^1(T)&=&\lim_{\stackrel{\longleftarrow}{n}}H^1(\ZZ[\zeta_{p^n},1/p],T),\\
\Hh^1(V)&=&\QQ\otimes\Hh^1(T).
\end{eqnarray*}

Let $V(j)$ denote the $j$th Tate twist of $V$, i.e. $V(j)=V\otimes\Qp e_j$ where $G_{\Qp}$ acts on $e_j$ via $\chi^j$. We have $\DD(V(j))=t^{-j}\DD(V)\otimes e_j$. For any $v\in\DD(V)$, $v_j=v\otimes t^{-j}e_j$ denotes its image in $\DD(V(j))$. We write $\Tw_{j,V}:\HIw(V)\rightarrow\HIw(V(j))$ for the isomorphism defined in \cite[Section A.4]{perrinriou93}, which depends on our choice of $\zeta_{p^n}$. For each $n$ and $j$, we write
\[
\exp_{n,j}:\Qpn\otimes\DD(V(j))\rightarrow H^1(\Qpn,V(j))
\]
for Bloch-Kato's exponential defined in \cite{blochkato}.


\subsection{Modular forms}\label{modularforms}

   Let $f=\sum a_nq^n$ be a normalised eigen-newform of weight $k\ge2$, level $N$ and character $\epsilon$. Write $F_f=\QQ(a_n:n\ge1)$ for its coefficient field. Let $\bar{f}=\sum\bar{a}_nq^n$ be the dual form to $f$, we have $F_f=F_{\bar{f}}$.

We write $L(f,s)$ for the complex $L$-function of $f$. If $\theta$ is a finite character of $G_\infty$, we write $L(f_\theta,s)$ for the twisted $L$-function of $f$ by $\theta$.

 We assume that $p\nmid N$ and fix a prime of $F_f$ above $p$. We denote the completion of $F_f$ at this prime by $E$ and fix a uniformiser $\varpi$. We write $V_f$ for the 2-dimensional $E$-linear representation of $G_{\QQ}$ associated to $f$ from \cite{deligne69}. When restricted to $G_{\Qp}$, $V_f$ is crystalline and its de Rham filtration is given by
    \begin{equation}\label{filtration}
     \DD^i(V_f)=
     \left\{
     \begin{array}{ll}
      \DD(V_f)         & \text{if $i\le0$}\\
      E\omega                     & \text{if $1\le i\le k-1$}\\
      0                          & \text{if $i\ge k$}
     \end{array}\right.
    \end{equation}
for some $0\ne\omega\in\DD(V_f)$. Hence, the Hodge-Tate weights of $V_f$ are $0$ and $1-k$. The action of $\vp$ on $\DD(V_f)$ satisfies $\vp^2-a_p\vp+\epsilon(p)p^{k-1}=0$.

If $v\in V_f$, we write $v^\pm$ for the component of $v$ on which the complex conjugation acts by $\pm1$.


\section{Construction of the Coleman maps}\label{ap0}

In this section, we define the plus and minus Coleman maps for a modular form $f$ as in Section~\ref{modularforms} under the following condition:
\begin{itemize}
\item {\bf Assumption (1)}: $a_p=0$ and the eigenvalues of $\vp$ on $\DD(V_f)$ are not integral powers of $p$.
\end{itemize}

 We first review the definition of Perrin-Riou's exponential from \cite{perrinriou94} for general crystalline representations and results of Kato \cite{kato04} on general modular forms. We then prove a divisibility property of the image of the Perrin-Riou pairing under assumption (1) in order to define $\col^\pm$.


\subsection{Perrin-Riou's exponential}
Throughout this section, we fix $V$ a crystalline $p$-adic representation of $G_{\Qp}$ such that the action of $\vp$ on $\DD(V)$ has no eigenvalues which are integral powers of $p$. Let $j$ be an integer. Since $\vp$ acts on $t$ via multiplication by $p$ and $\DD(V(j))=t^{-j}\DD(V)\otimes e_j$, the eigenvalues of $\vp$ on $\DD(V(j))$ are not integral powers of $p$ either.

Since $V(j)^{G_{\Qpi}}$ is also a crystalline representation, it is a sum of characters. But a character is crystalline if and only if it is the product of an unramified character and a power of $\chi$ (see for example \cite[Example 3.1.4]{breuil}). Therefore, our assumption on the eigenvalues of $\vp$ implies that $V(j)^{G_{\Qpi}}=0$. 

For each $j\in\ZZ$ and $n\ge0$, under our assumptions on the eigenvalues of $\vp$, the exponential map $\exp_{n,j}$ induces an isomorphism
\[
\exp_{n,j}:\Qpn\otimes \DD(V(j))/\DD^{0}(V(j))\rightarrow H^1_f(\Qpn,V(j)).
\]
When $n\ge1$, there is a well-defined map
\begin{eqnarray*}
\Xi_{n,V(j)}:\DD_\infty(V(j))&\rightarrow&\Qpn\otimes\DD(V(j))\\
g&\mapsto&(p\otimes\vp)^{-n}G(\zeta_{p^n}-1)
\end{eqnarray*}
where $G\in \HH_{\infty,\Qp}\otimes\DD(V)$ is such that $(1-\vp)G=g$ (see \cite[Section~3.2.2]{perrinriou94}). Moreover, $(\exp_{n,j}\circ\Xi_{n,V(j)})_{n\geq1}$ are compatible with the corestriction maps. In other words, the following diagram commutes:
\[
\xymatrix{
\DD_\infty(V(j))\ar[rrrrd]_{\exp_{n,j}\circ\Xi_{n,V(j)}}\ar[rrrr]^{\exp_{n+1,j}\circ\Xi_{n+1,V(j)}}&&&&H^1(\QQ_{p,n+1},V(j))\ar[d]^{\mathrm{cor}_{n+1/n}}\\
&&&&H^1(\Qpn,V(j)).
}
\]

The definition of the Perrin-Riou exponential is given by the following theorem, which is the main result of \cite{perrinriou94}.

\begin{theorem}\label{prexp}
Let $h$ be a positive integer such that $\DD^{-h}(V)=\DD(V)$. Then, for all integers $j\geq1-h$, there is is a unique family of $\Lambda(G_\infty)$-homomorphisms
\[
\Omega_{V(j),h+j}:\DD_\infty(V(j))\rightarrow\HH_\infty(G_\infty)\underset{\Lambda(G_\infty)}{\otimes}\HIw(T(j))
\]
such that the following diagram commutes:
\[
\xymatrix{
\DD_\infty(V(j))\ar[rrr]^{\Omega_{V(j),h+j}\ \ \ \ \ \ \ }\ar[d]_{\Xi_{n,V(j)}}&&&\HH_\infty(G_\infty)\underset{\Lambda(G_\infty)}{\otimes}\HIw(T(j))\ar[d]^{\mathrm{pr}}\\
\Qpn\otimes\DD(V(j))\ar[rrr]^{(h+j-1)!\exp_{n,j}}&&& H^1(\Qpn,V(j))
}
\]
where $n\ge1$ and $\rm pr$ stands for projection. Moreover, we have
\[
\Tw_{1,V(j)}\circ\Omega_{V(j),h+j}\circ(\partial\otimes te_{-1})=-\Omega_{V(j+1),h+j+1}.
\]
\end{theorem}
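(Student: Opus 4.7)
The plan is to treat uniqueness, existence, and the twisting identity separately.

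\emph{Uniqueness.} The commuting diagram fixes the image of $\Omega_{V(j),h+j}(g)$ in every $H^1(\Qpn,V(j))$ with $n\ge 1$. Because $V(j)^{G_{\Qpi}}=0$ (the eigenvalue hypothesis excludes the $\vp$-eigenvalue of any nonzero crystalline character), the natural map $\HIw(V(j))\hookrightarrow\varprojlim_n H^1(\Qpn,V(j))$ is injective; and since $\HH_\infty(G_\infty)$ acts on each $H^1(\Qpn,V(j))$ through its quotient $\Qp[G_n]$, the module $\HH_\infty(G_\infty)\otimes_{\Lambda(G_\infty)}\HIw(T(j))$ injects into $\prod_{n\ge 1}H^1(\Qpn,V(j))$. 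Two candidates for $\Omega_{V(j),h+j}$ agreeing at every finite level therefore coincide, forcing also uniqueness of the $\Lambda(G_\infty)$-module structure.

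\emph{Existence.} The corestriction compatibility of $(\exp_{n,j}\circ\Xi_{n,V(j)})_n$ stated just before the theorem shows that, for each $g\in\DD_\infty(V(j))$, the sequence
\[
c_n(g)=(h+j-1)!\,\exp_{n,j}\Xi_{n,V(j)}(g)\qquad(n\ge 1)
\]
defines an element of $\HIw(V(j))$. It remains to verify two properties of the association $g\mapsto(c_n(g))_n$: $\Lambda(G_\infty)$-linearity, which reduces to the $\Gamma$-equivariance of $\Xi_{n,V(j)}$ (via the $\Gamma$-action on $\zeta_{p^n}$) and of the Bloch--Kato exponential; and \emph{integrality}, i.e.\ that the map actually takes values in the subspace $\HH_\infty(G_\infty)\otimes_{\Lambda(G_\infty)}\HIw(T(j))$ with the correct growth order $h+j$. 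I expect this last step to be the main obstacle: bounding the $p$-adic size of the values $G(\zeta_{p^n}-1)$, where $(1-\vp)G=g$, against Mahler-transform growth of order $O(\log_p^{h+j})$, is invisible from the level-by-level formulas and requires the distribution theory on $\Zp^\times$ and the analysis of $(1-\vp)$ on $\HH_{\infty,\Qp}$ developed in \cite{perrinriou94}.

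\emph{Twisting identity.} Uniqueness reduces this to checking that $\Tw_{1,V(j)}\circ\Omega_{V(j),h+j}\circ(\partial\otimes te_{-1})$ and $-\Omega_{V(j+1),h+j+1}$ have the same projection to every $H^1(\Qpn,V(j+1))$. Substituting the prescribing diagram on each side, this reduces to the purely algebraic identity
\[
\Tw_{1,V(j)}\bigl((h+j-1)!\,\exp_{n,j}\Xi_{n,V(j)}((\partial\otimes te_{-1})g)\bigr)=-(h+j)!\,\exp_{n,j+1}\Xi_{n,V(j+1)}(g)
\]
for every $n\ge 1$. Under the substitution $t=\log(1+X)$ the operator $\partial=(1+X)d/dX$ becomes $d/dt$, and one tracks how differentiation past $t^{-j}e_j$ interacts with the Tate twist on the cohomology side. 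These computations simultaneously produce the factorial shift $(h+j-1)!\to(h+j)!$ and the overall minus sign, completing the argument.
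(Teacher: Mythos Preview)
The paper does not prove this theorem; its entire proof is the citation \cite[Section 3.2.3]{perrinriou94}. There is therefore no paper-internal argument to compare your proposal against---the result is being quoted, not established.

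Your outline is a fair high-level summary of how Perrin-Riou's construction goes, and you correctly isolate the substantive step: the growth estimate in the existence part is the real content, and you explicitly defer it to \cite{perrinriou94}. In that sense your proposal amounts to the same citation the paper gives, framed by a uniqueness argument and a finite-level reduction for the twist identity. One point on uniqueness deserves care: the injectivity of $\HH_\infty(G_\infty)\otimes_{\Lambda(G_\infty)}\HIw(T(j))\to\prod_{n\ge1}H^1(\Qpn,V(j))$ is not entirely immediate from $V(j)^{G_{\Qpi}}=0$. After passing to a $\Lambda_{\Qp}(G_\infty)$-basis of $\HIw(V(j))$ one still needs that a nonzero element of $\HH_\infty(G_\infty)$ cannot vanish modulo $\gamma^{p^{n-1}}-1$ for every $n$; this holds because such an element would be divisible by every $\Phi_m(\gamma)$ and hence have unbounded logarithmic order, but it is worth saying explicitly.
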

\begin{proof}\cite[Section 3.2.3]{perrinriou94}\end{proof}

\begin{remark}\label{growthrate}
By \cite[Section 3.2.4]{perrinriou94}, if $g\in \HH_{0,\Qp}^{\psi=0}\otimes \DD_\alpha(V(j))$ where $\DD_\alpha(V(j))$ is the subspace of $\DD(V(j))$ in which $\vp$ has slope $\alpha$, then $\Omega_{V(j),h+j}(g)$ is $O(\log_p^{h+\alpha})$, i.e. contained in $\HH_{h+\alpha}(G_\infty)\otimes\HIw(T(j))$.
\end{remark}

\begin{remark}\label{con1}The theorem implies the following congruence for $r\ge0$:
\[(-1)^r\Tw_{r,V(j)}(\Omega_{V(j),h+j}(g))\equiv
 (h+j+r-1)!\exp_{n,j+r}\circ\Xi_{n,V(j+r)}\circ(\partial^{-r}\otimes t^{-r}e_r)(g)\mod (\gamma^{p^{n-1}}-1).
\]
\end{remark}

\subsection{Perrin-Riou's pairing}\label{prpair}
Let $M$ be a finite extension of $\Qp$ and we further assume that $V$ is a vector space over $M$ and the action of $G_{\Qp}$ is compatible with the multiplication by $M$. We fix $T$ an $\OO_M$-lattice of $V$ which is stable under $G_{\Qp}$. We write $V^*$ for the $M$-linear dual of $V$ and $T^*$ for the $\OO_M$-linear dual of $T$. Since $H^1(\QQ_{p,n},T)$ and $H^1(\QQ_{p,n},T^*(1))$ are $\OO_M[G_n]$-modules, $\HIw(T)$ and $\HIw(T^*(1))$ are $\Lambda_M(G_\infty)$-modules. By \cite[Section 3.6.1]{perrinriou94}, there is a non-degenerate pairing
\begin{eqnarray*}
<,>:\HIw(T)\times\HIw(T^*(1))&\rightarrow&\Lambda_{\calO_M}(G_\infty)\\
((x_n)_n,(y_n)_n)&\mapsto&\left(\sum_{\sigma\in G_n}[x_n^\sigma,y_n]_n\cdot\sigma\right)_n
\end{eqnarray*}
where $[,]_n$ is the natural pairing
\[H^1(\Qpn,T)\times H^1(\Qpn,T^*(1))\rightarrow\calO_M.\]
The pairing $<,>$ extends to
\[
\left(\HH_{\infty,M}(G_\infty)\underset{\Lambda_{\calO_M}(G_\infty)}{\otimes}\HIw(T)\right)\times\left(\HH_{\infty,M}(G_\infty)\underset{\Lambda_{\calO_M}(G_\infty)}{\otimes}\HIw(T^*(1))\right)\rightarrow\HH_{\infty,M}(G_\infty),
\]
which we also denote by $<,>$. Let $j$ and $h$ be integers satisfying conditions of Theorem~\ref{prexp}. If $\eta\in \DD(V(j))$, then $(1+X)\otimes\eta\in\DD_\infty(V(j))$. Using the pairing $<,>$, we define a map:
\begin{eqnarray*}
\LL_\eta^{h,j}:\HIw(T(j)^*(1))&\rightarrow&\HH_{\infty,M}(G_\infty)\\
\mathbf{z}&\mapsto&<\Omega_{V(j),h+j}((1+X)\otimes\eta),\mathbf{z}>.
\end{eqnarray*}
Note that $\LL_\eta^{h,j}$ modulo $\gamma^{p^{n-1}}-1$ induces a map into $M[G_n]$, which we denote by $\LL_{\eta,n}^{h,j}$. Also, $\LL_\eta^{h,j}$ extends naturally to a map on $\HIw(V(j)^*(1))$, which we write as $\LL_\eta^{h,j}$ also.


\subsubsection{Explicit formulae of $\LL_{\eta,n}^{h,j}$} We want to say something about values of the image of $\LL_{\eta,n}^{h,j}$ at some special characters on $G_\infty$. To do this, we make use of the following result. 

  \begin{lemma}
   Under the notation above, let $\eta\in\DD(V(j))$. Then, the projection of 
\[
\frac{1}{(h+j-1)!}\Omega_{V(j),h+j}((1+X)\otimes\eta)
\]  
 into $H^1(\Qpn,V(j))$ is given by
  \[
     \left\{
      \begin{array}{ll}
        p^{-n}\exp_{n,j}\left(\sum_{m=0}^{n-1}\zeta_{p^{n-m}}\otimes\vp^{m-n}(\eta)+(1-\vp)^{-1}(\eta)\right)    & \text{if $n\geq 1$}\\
            \exp_{0,j}\left(\left(1-\frac{\vp^{-1}}{p}\right)(1-\vp)^{-1}(\eta)\right)           & \text{if $n=0$.}
      \end{array}\right.
    \]
   \end{lemma}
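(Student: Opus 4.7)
The plan is to invoke Theorem~\ref{prexp}, whose commutative diagram identifies the projection of $\frac{1}{(h+j-1)!}\Omega_{V(j),h+j}((1+X)\otimes\eta)$ into $H^1(\Qpn,V(j))$ with $\exp_{n,j}\circ\Xi_{n,V(j)}((1+X)\otimes\eta)$ for $n\ge1$. This reduces the lemma to computing $\Xi_{n,V(j)}((1+X)\otimes\eta)=(p\otimes\vp)^{-n}G(\zeta_{p^n}-1)$, where $G\in\HH_{\infty,\Qp}\otimes\DD(V(j))$ is any element satisfying $(1-\vp)G=(1+X)\otimes\eta$ (and whose existence is guaranteed by Perrin-Riou's construction, since $(1+X)\otimes\eta$ lies in $\HH_{0,\Qp}^{\psi=0}\otimes\DD(V(j))$).

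The key heuristic is the formal geometric series
\[
G=\sum_{m\ge0}\vp^m((1+X)\otimes\eta)=\sum_{m\ge0}(1+X)^{p^m}\otimes\vp^m(\eta),
\]
which termwise satisfies $(1-\vp)G=(1+X)\otimes\eta$ by telescoping. Although this series does not converge in $\HH_{\infty,\Qp}\otimes\DD(V(j))$, its formal evaluation at $X=\zeta_{p^n}-1$ does: using $(1+X)^{p^m}|_{X=\zeta_{p^n}-1}=\zeta_{p^{n-m}}$ for $0\le m<n$ and $=1$ for $m\ge n$, we obtain
\[
\sum_{m=0}^{n-1}\zeta_{p^{n-m}}\otimes\vp^m(\eta)+\sum_{m\ge n}\vp^m(\eta)=\sum_{m=0}^{n-1}\zeta_{p^{n-m}}\otimes\vp^m(\eta)+\vp^n(1-\vp)^{-1}(\eta).
\]
The tail is well-defined in $\DD(V(j))$ because $\vp$ has no eigenvalue equal to $1$ on $\DD(V(j))$ (by the hypothesis that no eigenvalue is an integral power of $p$, in particular not $p^0=1$). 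Applying $(p\otimes\vp)^{-n}$ then reproduces the stated formula for $n\ge1$.

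To justify this rigorously, I would fix an actual $G$ and write $G=G_N+R_N$ with $G_N=\sum_{m=0}^{N-1}(1+X)^{p^m}\otimes\vp^m(\eta)$; then $(1-\vp)R_N=(1+X)^{p^N}\otimes\vp^N(\eta)$, and since $(1+X)^{p^N}|_{X=\zeta_{p^n}-1}=1$ for $N\ge n$, one shows that $R_N(\zeta_{p^n}-1)$ stabilises to $\vp^n(1-\vp)^{-1}(\eta)$, with the residual indeterminacy lying in $\ker(1-\vp)$ and thus not affecting $\Xi_{n,V(j)}$ at the relevant finite level. For $n=0$, direct evaluation of the geometric series at $X=0$ would give $(1-\vp)^{-1}(\eta)$; the extra Euler factor $(1-\vp^{-1}/p)$ in the lemma reflects Perrin-Riou's separate treatment of the trivial character in \cite[Section~3.2]{perrinriou94}, where comparison with the Bloch-Kato exponential at $n=0$ introduces this factor.

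The main obstacle is making the divergent geometric series rigorous --- exhibiting an actual $G$ and controlling $R_N(\zeta_{p^n}-1)$ as $N\to\infty$ --- and separately pinning down the Euler factor appearing in the $n=0$ case.
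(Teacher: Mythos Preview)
Your approach is essentially the same as the paper's. The paper's proof consists of a single sentence: it invokes Remark~\ref{con1} (i.e., the commutative diagram of Theorem~\ref{prexp}) together with the explicit solution of $(1-\vp)G=(1+X)\otimes\eta$ given in \cite[Section~2.2]{perrinriou94}. Your heuristic geometric series and its termwise evaluation at $X=\zeta_{p^n}-1$ are precisely the content of that citation, and the ``obstacle'' you flag (producing a genuine $G$ and handling the $n=0$ Euler factor) is exactly what is worked out in Perrin-Riou's Section~2.2 and then quoted by the paper rather than reproved.
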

   \begin{proof}
    This is a straightforward application of Remark~\ref{con1} to the solution of $(1-\vp)G=(1+X)\otimes\eta$ as given in \cite[Section 2.2]{perrinriou94}.
   \end{proof}

For $n\ge1$ and $\eta\in\DD(V(j))$, we write
\[
\gamma_{n,j}(\eta):=p^{-n}\left(\sum_{i=0}^{n-1}\zeta_{p^{n-i}}\otimes\vp^{i-n}(\eta)+(1-\vp)^{-1}(\eta)\right).
\]
Remark \ref{con1} and properties of the twist map (see e.g. \cite[Sections 3.6.1 and 3.6.5]{perrinriou94}) implies that for $\mathbf{z}\in\HIw(T(j)^*(1))$ and $r\ge0$,
\begin{equation}\label{con2}
\frac{1}{(h+j+r-1)!}\Tw_{r}(\LL^{h,j}_\eta(\mathbf{z}))
\equiv\sum_{\sigma\in G_n}\left[\exp_{n,j+r}(\gamma_{n,j+r}(\eta_r)^\sigma),z_{-r,n}\right]_n\cdot\sigma\mod (\gamma^{p^{n-1}}-1)
\end{equation}
where $\Tw_r$ acts on $\HH_\infty(G_\infty)$ via $\sigma\mapsto\chi(\sigma)^r\sigma$ for $\sigma\in G_\infty$ and $z_{-r,n}$ is the image of $\mathbf{z}$ under the composition
\[
\HIw(T(j)^*(1))\stackrel{(-1)^r\Tw_{-r}}{\xrightarrow{\hspace*{1.5cm}}}\HIw(T(j+r)^*(1))\stackrel{\rm{pr}}{\longrightarrow}H^1(\Qpn,T(j+r)^*(1)).
\]
By \cite[Chapter II, Section 1.4]{kato93}, we also have
\[\left[\exp_{n,j+r}(\cdot),\cdot\right]_n=\Tr_{n/0}\otimes\id\left(\left[\cdot,\exp^*_{n,j+r}(\cdot)\right]_n'\right)\]
where $\exp^*_{n,j+r}$ is the dual exponential map
\[
\exp^*_{n,j+r}:H^1(\Qpn,V(j+r)^*(1))\rightarrow \DD^0(V(j+r)^*(1))
\]
and the pairing
\[
[,]_n':\Qpn\otimes \DD(V(j+r))\times \Qpn\otimes \DD(V(j+r)^*(1))\rightarrow \Qpn\otimes M
\]
is induced by the natural pairing 
\[\DD(V(j+r))\times \DD(V(j+r)^*(1))\rightarrow M.\]
To ease notation, we simply write $[,]_n$ for $[,]_n'$ when it does not cause confusion. We can now rewrite (\ref{con2}) as:
\begin{equation}\label{con3}
\begin{split}
&\frac{1}{(h+j+r-1)!}\Tw_r(\LL_\eta^h(\mathbf{z}))\\
\equiv\ &\sum_{\sigma\in G_n}\Tr_{n,0}\left[\gamma_{n,j+r}(\eta_r)^\sigma,\exp^*_{n,j+r}(z_{-r,n})\right]_n\cdot\sigma\mod (\gamma^{p^{n-1}}-1)\\
\equiv\ &\left[\sum_{\sigma\in G_n}\gamma_{n,j+r}(\eta_r)^\sigma\sigma,\sum_{\sigma\in G_n}\exp^*_{n,j+r}(z_{-r,n}^\sigma)\sigma^{-1}\right]_n\mod (\gamma^{p^{n-1}}-1).
\end{split}
\end{equation}
Note that we have recovered the pairing $P_n$ of \cite{kurihara02}. We write the quantity in (\ref{con3}) as $P_{n,r}(\eta,z_{-r,n})$. Following the calculations of \cite{kurihara02}, we can deduce the following special values of $\LL_\eta^{h,j}$:

\begin{lemma}\label{char}
 For an integer $r\ge0$, we have
\begin{eqnarray*}
&&\frac{1}{(h+j+r-1)!}\chi^r\left(\LL_\eta^{h,j}(\mathbf z)\right)\\
&=&\left[\left(1-\frac{\vp^{-1}}{p}\right)(1-\vp^{-1})(\eta_r),\exp_{0,r+j}^*(z_{-r,0})\right]_0.
\end{eqnarray*}
Let $\theta$ be a character of $G_n$ which does not factor through $G_{n-1}$ with $n\ge1$, then
\begin{eqnarray*}
&&\frac{1}{(h+j+r-1)!}\chi^r\theta\left(\LL_\eta^{h,j}(\mathbf z)\right)\\
&=&\frac{1}{\tau(\theta^{-1})}\sum_{\sigma\in G_n}\theta^{-1}(\sigma)\left[\vp^{-n}(\eta_r),\exp_{n,r+j}^*(z_{-r,n}^\sigma)\right]_n
\end{eqnarray*}
where $\tau$ denotes the Gauss sum.
\end{lemma}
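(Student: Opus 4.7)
The plan is to specialise the congruence (\ref{con3}) at characters of $G_n$, using the identity $\chi^r\theta(F)=\theta(\Tw_r F)$ for $F\in\HH_\infty(G_\infty)$ and any finite character $\theta$ of $G_\infty$. Since evaluation at any character factoring through $G_n$ kills $\gamma^{p^{n-1}}-1$, the congruence becomes an exact equality after specialisation; this makes the two formulae accessible from a single common formula.

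For the first formula, I would take $\theta$ trivial and apply the augmentation to $P_{n,r}(\eta,z_{-r,n})$. By bilinearity of the pairing this reduces to computing
\[
\sum_{\sigma\in G_n}\gamma_{n,j+r}(\eta_r)^\sigma = (\Tr_{\Qpn/\Qp}\otimes\id)\bigl(\gamma_{n,j+r}(\eta_r)\bigr).
\]
Plugging in the explicit formula for $\gamma_{n,j+r}$ and using the standard traces $\Tr_{\Qpn/\Qp}(\zeta_{p^{n-i}})=0$ for $n-i\ge 2$ and $\Tr_{\Qpn/\Qp}(\zeta_p)=-p^{n-1}$, only the term indexed by $i=n-1$ contributes; a short algebraic manipulation shows the sum equals $(1-\vp^{-1}/p)(1-\vp)^{-1}(\eta_r)$, consistent with the $n=0$ case of the preceding lemma via corestriction compatibility of $\exp\circ\Xi$. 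A final Frobenius-adjointness step on the duality between $\DD(V(j+r))$ and $\DD(V(j+r)^*(1))$, together with the identity $(1-\vp)^{-1}=-\vp^{-1}(1-\vp^{-1})^{-1}$ moved across the pairing, produces the expression with $(1-\vp^{-1}/p)(1-\vp^{-1})(\eta_r)$ paired with $\exp^*_{0,r+j}(z_{-r,0})$.

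For the second formula, with $\theta$ of exact conductor $p^n$, I would instead evaluate $P_{n,r}$ at $\theta$. Splitting the pairing gives the product of $\sum_\sigma\theta(\sigma)\gamma_{n,j+r}(\eta_r)^\sigma$ with $\sum_\tau\theta^{-1}(\tau)\exp^*_{n,j+r}(z_{-r,n}^\tau)$. In the first factor, the $G_n$-invariant term $(1-\vp)^{-1}(\eta_r)$ vanishes because $\sum_\sigma\theta(\sigma)=0$. The terms involving $\zeta_{p^{n-i}}$ with $1\le i\le n-1$ also vanish: since $\theta$ is primitive of conductor $p^n$, its restriction to $\Gal(\Qpn/\Qp(\zeta_{p^{n-i}}))$ is non-trivial, so the partial sum of $\theta$ over each fibre of $G_n\twoheadrightarrow\Gal(\Qp(\zeta_{p^{n-i}})/\Qp)$ vanishes. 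Only the $i=0$ term survives, producing $p^{-n}\tau(\theta)\vp^{-n}(\eta_r)$, where $\tau(\theta)=\sum_\sigma\theta(\sigma)\zeta_{p^n}^\sigma$ is the Gauss sum. Combining with the dual slot and rewriting $p^{-n}\tau(\theta)$ as $\theta(-1)/\tau(\theta^{-1})$ via $\tau(\theta)\tau(\theta^{-1})=\theta(-1)p^n$ (the sign $\theta(-1)$ being absorbed by the substitution $\sigma\mapsto\sigma^{-1}$ in the dual sum) yields the claimed formula.

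The main obstacle is not conceptual but purely book-keeping: handling the Gauss sum conventions, the precise action of $\Tw_r$, the sign change from $\sigma\mapsto\sigma^{-1}$, and above all the Frobenius-adjointness on the $\DD$-pairing needed to convert $(1-\vp)^{-1}$ into $(1-\vp^{-1})$ in the first formula. Once these conventions are pinned down, the computation is a direct transcription of Kurihara's argument in \cite{kurihara02} to our setting.
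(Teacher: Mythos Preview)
Your proposal is correct and follows the same approach as the paper, which simply states that the formulae are obtained by ``following the calculations of \cite{kurihara02}'' applied to the expression $P_{n,r}(\eta,z_{-r,n})$ in (\ref{con3}); you have essentially written out those calculations in detail. Note that the operator in the first formula should read $(1-\vp)^{-1}$ (as in (\ref{char2}) and the preceding lemma) rather than $(1-\vp^{-1})$, so the ``Frobenius-adjointness'' step you flag as an obstacle is in fact not needed.
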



\subsection{Modular forms and Kato zeta elements}\label{katoreview}
The details of the results in this section can be found in \cite{kato04}.

\subsubsection{$L$-functions and $p$-adic $L$-functions}\label{lfunctions}
Let $f$ be as in Section~\ref{modularforms}. For any $v\in V_f$ such that $v^\pm\ne0$, it determines a lattice $\calO_E$-lattice $T_f$ of $V_f$. We choose $v$ such that $T_f$ is stable under $G_\QQ$. Note that as a representations of $G_\QQ$, $V_f^*\cong V_{\bar{f}}(k-1)$. Hence, $T_{f}$ determines a lattice $T_{\bar{f}}$ of $V_{\bar{f}}$ naturally.

Let ${\rm per}:\DD^1(V_f)\rightarrow V_f$ be the period map defined in \cite{kato04}. Fix $0\ne\omega\in\DD^1(V_f)$ and let $\Omega_{\pm}\in\CC^\times$ such that ${\rm per}(\omega)=\Omega_+v^++\Omega_-v^-$. The $p$-adic $L$-functions associated to $f$ are given by the following.

\begin{theorem}\label{padiclfunctions}
Let $\alpha$ be a root of $X^2-a_pX+\epsilon(p)p^{k-1}$ such that $v_p(\alpha)<k-1$. Under the notation above, there exists a unique $L_{p,\alpha}\in\HH_{\infty}(G_\infty)$ (depending on the choice of $\omega$ and $v$) such that for any integer $0\le r\le k-2$ and any character $\theta$ of $G_n$ which does not factor through $G_{n-1}$ with $n\ge1$,
\[
\chi^r\theta(L_{p,\alpha})=\frac{c_{n,r}\alpha^{-n}}{\tau(\theta)\Omega_{\pm}}L(f,\theta,r)
\]
where $c_{n,r}$ is some constant, only dependent on $n$ and $r$ and $\pm=(-1)^{k-r}\theta(-1)$.
\end{theorem}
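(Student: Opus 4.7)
The plan is to define $L_{p,\alpha}$ as the image of Kato's zeta element under the Perrin-Riou pairing $\LL^{h,j}_\eta$ of Section~\ref{prpair}, with $\eta$ chosen to be a Frobenius eigenvector with eigenvalue $\alpha$. The interpolation property is then read off from Lemma~\ref{char} combined with Kato's explicit reciprocity law, which identifies dual exponentials of zeta elements with complex $L$-values.

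More precisely, let $\eta_\alpha\in\DD(V_{\bar f})$ be a $\vp$-eigenvector with eigenvalue $\alpha$, normalised compatibly with the choice of $\omega$. The $\alpha$-eigenspace is one-dimensional: $\alpha$ and $\beta:=\epsilon(p)p^{k-1}/\alpha$ are distinct under the hypothesis $v_p(\alpha)<k-1$, and neither is an integral power of $p$. Let $\kato\in\Hh^1(T_{\bar f})$ be Kato's zeta element associated to $\bar f$ and to the basis $v$, and let $z\in\HIw(T_{\bar f})$ be its image under localisation at $p$; via the identification $V_f^*(1)\cong V_{\bar f}(k)$ and an appropriate application of $\Tw$, we regard $z$ as an element of $\HIw(V_f^*(1))$. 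Choose any integer $h\ge k-1$, so that $\DD^{-h}(V_f)=\DD(V_f)$. Define
\[
L_{p,\alpha}:=\LL^{h,0}_{\eta_\alpha}(z)\in\HH_{\infty,E}(G_\infty).
\]
By Remark~\ref{growthrate}, since $\vp$ acts on $\eta_\alpha$ with slope $v_p(\alpha)<k-1\le h$, this element lies in $\HH_{h+v_p(\alpha)}(G_\infty)\subset\HH_\infty(G_\infty)$.

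To check the interpolation at $\chi^r\theta$ with $0\le r\le k-2$ and $\theta$ of exact conductor $p^n$, $n\ge 1$, apply the second formula of Lemma~\ref{char}. The factor $\vp^{-n}(\eta_{\alpha,r})$ becomes $\alpha^{-n}\eta_{\alpha,r}$, producing the required $\alpha^{-n}$. The remaining pairing $[\eta_{\alpha,r},\exp^*_{n,r}(z_{-r,n}^\sigma)]_n$ is precisely the object controlled by Kato's explicit reciprocity law (\cite[Theorem~16.6]{kato04}), which matches the dual exponential of the zeta element with the complex $L$-value $L(f,\theta,r)/\Omega_\pm$ up to the Gauss sum $\tau(\theta)$, a de Rham period, and a factorial coming from Perrin-Riou's normalisation. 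The sign $\pm=(-1)^{k-r}\theta(-1)$ is forced by the action of complex conjugation on the $\pm$-eigenspaces picked out by $\mathrm{per}(\omega)=\Omega_+v^++\Omega_-v^-$. Absorbing the factorial $(h+r-1)!$ and the $\tau(\theta^{-1})$ of Lemma~\ref{char} into the constant $c_{n,r}$ gives the stated formula; the case $n=0$ is handled identically from the first formula in Lemma~\ref{char}.

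Uniqueness follows from density: the characters $\chi^r\theta$, with $0\le r\le k-2$ and $\theta$ varying over finite order characters of $G_\infty$, are dense in the continuous character space of $G_\infty$, and an element of $\HH_\infty(G_\infty)$ of controlled growth is determined by its values on such a set. The main obstacle is the invocation of Kato's explicit reciprocity law: once Perrin-Riou's map is in hand the construction is essentially formal, but identifying $\exp^*$ of the zeta element with the complex $L$-value, and tracking periods, Gauss sums, sign conventions, and factorial normalisations into the single constant $c_{n,r}$, is the substantive input and is where the full machinery of \cite{kato04} is needed.
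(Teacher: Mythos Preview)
Your construction is coherent, but it is not the paper's proof. The paper does not prove this theorem at all: it simply cites the classical literature (Amice--V\'elu, Mazur--Tate--Teitelbaum, and \cite[Theorem~16.2]{kato04}), where $L_{p,\alpha}$ is built from modular symbols and $p$-adic distributions. The construction you outline---defining $L_{p,\alpha}$ as $\LL_{\eta_\alpha}$ applied to Kato's zeta element and then reading off the interpolation from Lemma~\ref{char} and the explicit reciprocity law---is precisely the content of the \emph{next} result in the paper, Theorem~\ref{ka1}, which asserts that this construction \emph{recovers} the already-defined $L_{p,\alpha}$. So you have reversed the logical order: the paper first imports $L_{p,\alpha}$ from the classical theory, and only afterwards identifies it with the image of $\kato$.

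Your route is legitimate as an alternative existence proof, but two points need care. First, a small slip: $\eta_\alpha$ should lie in $\DD(V_f)$, not $\DD(V_{\bar f})$; compare the setup preceding Theorem~\ref{ka1}, where $V=V_f(1)$, $h=1$, and $\LL_\eta$ pairs against elements of $\HIw(T_{\bar f}(k-1))$. Second, be cautious about invoking \cite[Theorem~16.6]{kato04} directly, since that theorem is phrased as ``$\LL_{\eta_\alpha}(\kato)=L_{p,\alpha}$'' and already presupposes the object you are trying to construct. What you actually need is the underlying input---Theorem~\ref{katozetamc}(a), i.e.\ \cite[Theorem~12.5]{kato04}---which computes $\exp^*$ of the zeta element in terms of complex $L$-values; combined with Lemma~\ref{char} this gives the interpolation without circularity. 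Each approach has its merits: the classical construction is independent of Kato's Euler system, while yours makes the link between $L_{p,\alpha}$ and $\kato$ definitional rather than a separate theorem.
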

\begin{proof}
\cite{amicevelu75}, \cite{MTT} or \cite[Theorem 16.2]{kato04}.
\end{proof}

If $f$ corresponds to an elliptic curve $\E$ over $\QQ$, there is a canonical choice of $\omega$ and $T_f$, namely, the N\'{e}ron differential and $T_p(\E)(-1)$ (see \cite[Section~2.2.2]{kurihara02}) where $T_p(\E)$ denotes the Tate module of $\E$ at $p$. 


\subsubsection{Kato's main conjecture}
In order to state Kato's main conjecture, we have to review two important results from \cite{kato04} first.
\begin{theorem}\label{katoglobal}Under the notation above, we have:
\begin{itemize}
\item[(a)]$\Hh^2(T_f)$ is a torsion $\Lambda_{\calO_E}(G_\infty)$-module.
\item[(b)]$\Hh^1(T_f)$ is a torsion free $\Lambda_{\calO_E(G_\infty)}$-module and $\Hh^1(V_f)$ is a free $\Lambda_{E}(G_\infty)$-module of rank 1.
\end{itemize}
\end{theorem}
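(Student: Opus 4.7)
The plan is to follow Kato's arguments from \cite[\S12--13]{kato04}. Via Shapiro's lemma one identifies $\Hh^i(T_f)$ with an inverse limit of Galois cohomology over the tower $\QQ(\mu_{p^n})$, and the overall strategy is to pin down the generic $\Lambda_{\calO_E}(G_\infty)$-ranks of the $\Hh^i(T_f)$ using the global Euler-Poincar\'e characteristic, and then rule out torsion in $\Hh^1$ and non-torsion in $\Hh^2$ by separate inputs.

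For the torsion-freeness in (b), I would use that the $\Lambda_{\calO_E}(G_\infty)$-torsion submodule of $\Hh^1(T_f)$ is controlled by $\varinjlim_n H^0(\QQ(\mu_{p^n}),V_f/T_f)$, which vanishes once $V_f^{G_{k_\infty}}=0$. The latter follows from the irreducibility of $V_f$ as a $G_\QQ$-representation (due to Ribet): since $G_{k_\infty}$ is normal in $G_\QQ$ with abelian quotient, a non-zero $G_{k_\infty}$-fixed line would generate a proper $G_\QQ$-stable subspace, contradicting irreducibility. Because $T_f$ is free of $\calO_E$-rank $2$ with complex conjugation acting with eigenvalues $\pm 1$, the global Euler-Poincar\'e characteristic formula then yields
\[
\mathrm{rank}_{\Lambda_{\calO_E}(G_\infty)}\Hh^1(T_f)-\mathrm{rank}_{\Lambda_{\calO_E}(G_\infty)}\Hh^2(T_f)=1,
\]
so (a) is equivalent to $\Hh^1(T_f)$ having generic rank exactly $1$.

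The main obstacle is showing that $\Hh^2(T_f)$ is $\Lambda_{\calO_E}(G_\infty)$-torsion, a form of weak Leopoldt for $V_f$ along $k_\infty$. For this I would invoke the Kato zeta element $\kato\in\Hh^1(T_f)$ together with the interpolation property relating its image under a suitable Perrin-Riou/Coleman-type map to the $p$-adic $L$-function $L_{p,\alpha}$ of Theorem~\ref{padiclfunctions}. Non-vanishing of $L_{p,\alpha}$ (e.g.\ via Rohrlich-style non-vanishing of $L(f,\theta,r)$ for generic $\theta$ in the interpolation range) forces $\kato$ to be non-torsion, so $\mathrm{rank}\,\Hh^1(T_f)\ge 1$. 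Combined with the rank identity, this gives $\mathrm{rank}\,\Hh^1(T_f)=1$ and $\mathrm{rank}\,\Hh^2(T_f)=0$, proving (a).

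Finally, for the freeness assertion in (b), since $|\Delta|=p-1$ is coprime to $p$, the idempotent decomposition gives $E[\Delta]\cong\prod_\theta E_\theta$, and hence
\[
\Lambda_E(G_\infty)\cong\prod_\theta E_\theta[[\gamma-1]],
\]
a finite product of discrete valuation rings. By the preceding steps, each $\theta$-isotypical component of $\Hh^1(V_f)$ is torsion-free of generic rank $1$ over the corresponding DVR, hence free of rank $1$; reassembling the components, $\Hh^1(V_f)$ is free of rank $1$ over $\Lambda_E(G_\infty)$.
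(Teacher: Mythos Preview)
The paper does not give its own proof here; it simply cites \cite[Theorem~12.4]{kato04}. Your sketch is a faithful outline of Kato's argument: torsion-freeness of $\Hh^1$ from vanishing of $G_{k_\infty}$-invariants (via irreducibility of $V_f$), the Euler--Poincar\'e rank identity, and non-torsion of the zeta element forced by non-vanishing of the $p$-adic $L$-function, together with the observation that $\Lambda_E(G_\infty)$ decomposes as a product of DVRs so that torsion-free rank~$1$ forces freeness. This is essentially the same approach that the paper is invoking by citation.

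Two small points worth tightening. First, you write $\kato\in\Hh^1(T_f)$, but in general Kato only places the zeta element in $\Hh^1(V_f)$; integrality requires the large-image hypothesis (cf.\ Theorem~\ref{katozetamc}(c)). This does not affect your argument, since a non-torsion class in $\Hh^1(V_f)$ suffices to force $\mathrm{rank}\,\Hh^1\ge 1$. Second, because $\Lambda_{\calO_E}(G_\infty)$ is not a domain, the rank identity and the non-vanishing input must be applied \emph{componentwise} over the $\theta$-isotypical pieces for $\theta\in\widehat{\Delta}$; your final paragraph implicitly assumes this when asserting that each component has generic rank~$1$. The Euler characteristic indeed gives rank~$1$ in every component (twisting by $\theta$ swaps the $\pm 1$-eigenspaces of complex conjugation but preserves the count), and Rohrlich/Shimura non-vanishing supplies characters in every $\Delta$-component, so the argument goes through once this is made explicit.
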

\begin{proof}\cite[Theorem 12.4]{kato04} \end{proof}

\begin{theorem}\label{katozetamc}Fix a character $\delta:\Delta\rightarrow\ZZ/(p-1)\ZZ$.
\begin{itemize}
\item[(a)]Let $\theta$ be a character of $G_n$ and $\pm=(-1)^{k-r}\theta(-1)$ where $r$ is an integer such that $1\le r\le k-1$. Write
\begin{eqnarray*}
\kappa_\theta:\Qpn\otimes\DD^{0}(V_f(k-r))&\rightarrow&V_f\\
x\otimes y&\mapsto& \sum_{\sigma\in G_n}\theta(\sigma)\sigma(x){\rm per}(y)^{\pm}.
\end{eqnarray*}
There exists a unique $E$-linear map (independent of $\theta$ and $r$) $V_f\rightarrow\Hh^1(V_f)$; $v\mapsto\mathbf{z}_v$ such that $\kappa_\theta$ sends the image of $\mathbf{z}_v$ in $\Qpn\otimes\DD^0(V_f(k-r))$ (under the composition of the localisation, the twist map and the dual exponential) to $d_{r}\cdot L(\bar{f},\theta,r)\cdot v^{\pm}$ and $d_r$ is a constant which only depends on $r$.
\item[(b)]Let $\ZZ(T_f)\subset\Hh^1(V_f)$ denote the $\Lambda_{\calO_E}(G_\infty)$-module generated by $\mathbf{z}_{v^\pm}\in T_f$ and write $\ZZ(V_f)=\ZZ(T_f)\otimes\QQ$. Then, the quotient $\Hh^1(V_f)/\ZZ(V_f)$ is a torsion $\Lambda_{E}(G_\infty)$-module and 
\[\Char_{\Lambda_E(\Gamma)}(\Hh^1(V_f)^\delta/\ZZ(V_f)^\delta)\subset\Char_{\Lambda_E(\Gamma)}(\Hh^2(V_f)^\delta).\]
\item[(c)]If the homomorphism $G_\QQ\rightarrow GL_{\calO_E}(T_f)$ is surjective, then $\ZZ(T_f)\subset\Hh^1(T_f)$. Moreover, $\Hh^1(T_f)$ is a free $\Lambda_{\calO_E}$-module of rank 1 and
\[\Char_{\Lambda_{\calO_E}(\Gamma)}(\Hh^1(T_f)^\delta/\ZZ(T_f)^\delta)\subset\Char_{\Lambda_{\calO_E}(\Gamma)}(\Hh^2(T_f)^\delta).\] 
\end{itemize}
\end{theorem}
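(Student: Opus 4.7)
The overall plan is to read each of (a), (b), (c) off the Euler-system machinery of \cite{kato04}. The central object is the Beilinson--Kato Euler system: pairs of Siegel units in $K_2$ of modular curves $Y_1(Np^n)$, compatible as $n$ varies, which via the \'etale Chern class map and projection to the $f$-isotypic component of $H^1_{\rm \acute et}$ produce a canonical norm-compatible family of classes in $\Hh^1(V_f)$. For part (a), the plan is to define $\mathbf{z}_v$ on $v^\pm$ as this projected class and prove the interpolation formula
\[
\kappa_\theta(\mathbf{z}_v)=d_r\cdot L(\bar{f},\theta,r)\cdot v^\pm
\]
as an instance of Kato's explicit reciprocity law. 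Concretely one would compute the dual exponential $\exp^*_{n,r+j}$ of the $p$-adic localisation of the Euler system class in terms of the $p$-adic regulator of Siegel units, identify that regulator with a specialisation of an Eisenstein series by Kronecker's limit formula, and then recognise the resulting Eisenstein/newform pairing as the complex $L$-value via Rankin--Selberg. Independence of $(\theta,r)$ falls out because both sides are specialisations of a single class through the twist maps $\Tw_r$ and the dual exponentials used in Lemma~\ref{char}; $E$-linearity in $v$ is then automatic.

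For part (b), the plan is to run the Euler-system-to-Selmer-bound argument. One forms Kolyvagin derivatives of $\mathbf{z}_{v^\pm}$ over square-free levels built from auxiliary primes $\ell$ chosen by Chebotarev so that $\Fr_\ell$ kills the appropriate local cohomology, then uses global duality (Poitou--Tate) to convert the resulting annihilators of $\delta$-isotypic Selmer groups into the claimed divisibility
\[
\Char_{\Lambda_E(\Gamma)}\bigl(\Hh^1(V_f)^\delta/\ZZ(V_f)^\delta\bigr)\subset\Char_{\Lambda_E(\Gamma)}\bigl(\Hh^2(V_f)^\delta\bigr).
\]
That $\Hh^1(V_f)/\ZZ(V_f)$ is $\Lambda_E(G_\infty)$-torsion follows by combining Theorem~\ref{katoglobal}(b), which gives rank one, with non-triviality of some $\mathbf{z}_{v^\pm}$, which is in turn guaranteed by non-vanishing of at least one of the $L$-values appearing in part (a).

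For part (c), the upgrade from $E$ to $\calO_E$ is what the surjectivity hypothesis buys: the Chebotarev argument producing auxiliary primes with controlled local behaviour, and the Kolyvagin manipulations of derivative classes, all carry integral coefficients once the image of $G_\QQ$ in $GL_{\calO_E}(T_f)$ is as large as possible. One then obtains $\ZZ(T_f)\subset\Hh^1(T_f)$, freeness of $\Hh^1(T_f)$ of rank one (combined with Theorem~\ref{katoglobal}), and the integral divisibility. I expect the hard step throughout to be the explicit reciprocity law underlying part (a): once it is in place the Euler-system formalism of (b) and (c) is essentially mechanical, but the reciprocity law itself demands the delicate $p$-adic computation of Siegel-unit regulators together with their identification with Eisenstein series, and this is where nearly all the analytic content of the theorem is concentrated.
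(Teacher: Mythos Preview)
The paper does not prove this theorem at all: its entire proof is the single citation ``\cite[Theorem 12.5]{kato04}''. The result is imported wholesale from Kato's work, and the paper treats it as a black box to be used, not established.

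Your proposal, by contrast, is a reasonable high-level outline of what Kato actually does in \cite{kato04}: the construction of $\mathbf{z}_v$ from Siegel units via the \'etale Chern class map, the explicit reciprocity law identifying dual exponentials with $L$-values through Eisenstein series and the Rankin--Selberg method, and then the Euler-system/Kolyvagin machinery for the divisibilities in (b) and (c). As a sketch of the contents of \cite[Theorem 12.5]{kato04} it is broadly accurate, though of course each of the steps you describe (especially the explicit reciprocity law) occupies many pages in Kato's paper. For the purposes of the present paper, however, none of this needs to be reproduced: the correct ``proof'' here is simply the citation.
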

\begin{proof}\cite[Theorem 12.5]{kato04} \end{proof}

Kato's main conjecture states that:

\begin{conjecture}
The inclusion $\ZZ(T_f)\subset\Hh^1(T_f)$ holds. Moreover, if $\delta:\Delta\rightarrow\ZZ/(p-1)\ZZ$ is a character, then 
\[\Char_{\Lambda_{\calO_E}(\Gamma)}(\Hh^1(T_f)^\delta/\ZZ(T_f)^\delta)=\Char_{\Lambda_{\calO_E}(\Gamma)}(\Hh^2(T_f)^\delta).\]
\end{conjecture}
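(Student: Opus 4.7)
One direction, namely $\ZZ(T_f)\subset\Hh^1(T_f)$ together with
\[
\Char_{\Lambda_{\calO_E}(\Gamma)}(\Hh^1(T_f)^\delta/\ZZ(T_f)^\delta)\subseteq\Char_{\Lambda_{\calO_E}(\Gamma)}(\Hh^2(T_f)^\delta),
\]
is already supplied by Theorem~\ref{katozetamc}(c) under the standing surjectivity hypothesis on $G_\QQ\to GL_{\calO_E}(T_f)$. My plan is therefore to concentrate on the reverse divisibility, which is the genuine content of the main conjecture.

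The most natural route, in the context of this paper, is the CM strategy of Pollack--Rubin. Assuming $f$ has complex multiplication by an imaginary quadratic field $K$, one has $V_f\cong\mathrm{Ind}_{G_K}^{G_\QQ}V_\psi$ for a Hecke character $\psi$. Shapiro's lemma then rewrites $\Hh^i(T_f)$ as Iwasawa cohomology of the rank-one object $T_\psi$ over an appropriate abelian extension of $K$, converting the cyclotomic problem over $\QQ$ into a question on the two-variable $\Zp^2$-tower obtained by taking the compositum of the cyclotomic and anticyclotomic $\Zp$-extensions of $K$. The next step is to invoke Rubin's two-variable main conjecture for $K$, whose analytic side is generated by the elliptic-unit Euler system (equivalently, the Katz $p$-adic $L$-function). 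Cyclotomic specialisation then yields an equality of characteristic ideals in $\Lambda_{\calO_E}(\Gamma)$ between a cousin of $\Hh^2(T_f)^\delta$ and an ideal generated by a restricted elliptic unit. Finally, an explicit reciprocity law would identify the image of that elliptic unit under the Perrin-Riou map $\Omega_{V_f(j),h+j}$ of Theorem~\ref{prexp}, composed with a $\pm$-Coleman projection, with the corresponding image of $\kato$; this transports the elliptic-unit equality over to $\ZZ(T_f)$, closing the divisibility.

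The main obstacle is the final step in the supersingular regime. Because $a_p=0$, the eigenvalues of $\vp$ on $\DD(V_f)$ have non-integral slope, so by Remark~\ref{growthrate} the Perrin-Riou exponential lands in the growth algebra $\HH_\infty(G_\infty)$ rather than in $\Lambda(G_\infty)$ itself, and neither $\kato$ nor the Pollack $\pm$-$p$-adic $L$-function is directly a characteristic generator. Matching integral structures therefore requires a plus/minus decomposition and careful tracking of $p$-power denominators, which is precisely the origin of the extra factor $p^{n^\pm}$ anticipated in Conjecture~\ref{main2}; pinning this factor down, and doing so simultaneously on the elliptic-unit and Kato sides so that no spurious discrepancy creeps in, is where the genuine difficulty lies. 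A secondary, but not trivial, obstacle is to bootstrap the CM-case equality to non-CM $f$, for which the elliptic-unit machinery is unavailable and some alternative input (Eisenstein congruences, or an Euler system beyond Kato's) would have to take its place.
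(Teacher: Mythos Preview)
The statement you are trying to prove is labelled a \emph{Conjecture} in the paper, and the paper does not supply a proof of it in general. What the paper actually establishes is the reformulated version, Conjecture~\ref{ourmc} (equivalently Corollary~\ref{ourMC}), and only in the special case of CM forms with the trivial character $\theta=1$ on $\Delta$, under the additional assumptions (1)--(3). So there is no ``paper's own proof'' to compare against for the full statement; the general case remains open.

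Your CM outline is broadly the same route the paper takes in Section~\ref{CM}: identify $V_f$ with an induction from $G_K$, pass to the $\Zp^2$-extension $K_m$, invoke Rubin's results on elliptic units (Theorem~\ref{rubin} and \cite[Theorem~6.3]{pollackrubin04}), and close the loop via an explicit reciprocity law (Proposition~\ref{law}) identifying the image of the elliptic-unit generator $\xi$ with $\kato$ up to twist. The paper carries this out not at the level of Kato's conjecture directly but at the level of the $\pm$-Selmer groups, computing $\Char_{\Lambda_{\OO_p}(\Gamma)}(\Sel_p^\pm(f/K_c)^\vee)$ and matching it with $(p^{-s^\pm}L_p^\pm)$ by evaluating both sides at infinitely many finite-order characters; the equivalence with Kato's conjecture is then the content of Section~\ref{myconj}.

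One concrete gap in your write-up: you invoke Theorem~\ref{katozetamc}(c) for the easy inclusion ``under the standing surjectivity hypothesis'', but in the CM case the image of $G_\QQ\to GL_{\calO_E}(T_f)$ is abelian-by-order-two and is never all of $GL_2$, so part (c) is unavailable precisely in the situation where your strategy applies. The paper sidesteps this by working with the $\pm$-reformulation and obtaining both inclusions simultaneously from the elliptic-unit main conjecture, rather than relying on Kato's divisibility for one direction. Your final paragraph is correct that the non-CM case needs genuinely different input; the paper makes no claim there.
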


We call elements of $\ZZ(V_f)$ Kato zeta elements. In particular, we write $\kato_f$ for the one corresponding to our choice of ${v}\in V_{f}$ fixed in Section~\ref{lfunctions} and call it \textit{the} Kato zeta element associated to $f$.

We fix $\bar{v}\in V_{\bar{f}}$ and $\bar{\omega}\in\DD^{-1}(V_{\bar{f}}(k))$ for the dual form $\bar{f}$ similarly. Below, we relate the Kato zeta element $\kato_{\bar{f}}$ associated to $\bar{f}$ to the $p$-adic $L$-functions of $f$ defined by Theorem~\ref{padiclfunctions} via the map $\LL_{\eta}^{h,j}$. For simplicity, we write $\kato=\kato_{\bar{f}}$ from now on.

Let $V=V_f(1)$, then we can take $h=1$ and $j\ge0$ in Theorem~\ref{prexp} by \eqref{filtration}. For $\eta\in\DD(V_f)$, we simply write
\[
\LL_\eta=\LL_{\eta_1}^{1,0}:\HIw(T_{\bar{f}}(k-1))\rightarrow\HH_\infty(G_\infty)
\]
for the map we defined in Section~\ref{prpair}, with $M=E$.

\begin{theorem}\label{ka1}
For $\alpha$ as in Theorem~\ref{padiclfunctions}, there exists $\eta_\alpha$, an eigenvector of $\vp$ on $\DD(V_f)$ with eigenvalue $\alpha$ such that $[\eta_\alpha,\bar{\omega}]=1$. Moreover, the image of $\kato$ under the composition
\[
\Hh^1(V_{\bar{f}})\rightarrow\HIw(V_{\bar{f}})\stackrel{\Tw_{k-1}}{\longrightarrow}\HIw(V_{\bar{f}}(k-1))\stackrel{\LL_{\eta_\alpha}}{\longrightarrow}\HH_\infty(G_\infty)
\]
is the $p$-adic $L$-function $L_{p,\alpha}$ where the first map is just the localisation and $\Tw_{k-1}$ denotes $\Tw_{k-1,V_{\bar{f}}}$.
\end{theorem}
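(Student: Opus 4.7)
The plan is to identify the two sides by comparing their values at the interpolation characters. Both $L_{p,\alpha}$ (by Theorem~\ref{padiclfunctions}) and $\LL_{\eta_\alpha}(\Tw_{k-1}(\kato))$ (by Remark~\ref{growthrate}, since the slope of $\alpha$ is $(k-1)/2$ under Assumption (1)) lie in a common bounded-growth subspace $\HH_h(G_\infty)$ with $h<k-1$, so it suffices to check that their values at $\chi^r\theta$ agree for all $0 \le r \le k-2$ and all finite-order $\theta$.

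For the existence of $\eta_\alpha$, the relation $\vp^2 - a_p\vp + \epsilon(p)p^{k-1} = 0$ on $\DD(V_f)$, together with $a_p = 0$, gives two distinct eigenvalues $\pm\alpha$, so $\vp$ is semisimple. A choice of $\alpha$-eigenvector is unique up to scalar, and the compatibility of $\vp$ with the natural duality pairing $\DD(V_f) \times \DD(V_f^*) \to E$ shows this pairing is non-degenerate between distinct eigenspaces. Identifying $V_f^* \cong V_{\bar f}(k-1)$ and interpreting $\bar\omega$ as a nonzero element of the filtered piece on the dual side (via the relevant twist by $t^{-k}$), one concludes that $[\eta,\bar\omega]\ne 0$ for any nonzero $\eta$ in the $\alpha$-eigenspace, and rescaling yields the normalized $\eta_\alpha$.

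For the main calculation, I apply Lemma~\ref{char} with $V = V_f(1)$, $h = 1$, $j = 0$, $\eta = \eta_{\alpha,1}$ and $\mathbf z = \Tw_{k-1}(\kato)$ to produce an explicit formula for $\chi^r\theta\bigl(\LL_{\eta_\alpha}(\Tw_{k-1}(\kato))\bigr)$ as a pairing of $\vp^{-n}(\eta_{\alpha,1+r})$ against $\exp_{n,r}^*(z_{-r,n}^\sigma)$ summed against $\theta^{-1}(\sigma)$. Three factors then emerge: the prefactor $\alpha^{-n}$ from the eigenvalue identity $\vp^{-n}(\eta_{\alpha,1+r}) = p^{n(1+r)}\alpha^{-n}\eta_{\alpha,1+r}$; the Gauss sum $\tau(\theta)^{-1}$ from the interplay of $\tau(\theta^{-1})^{-1}$ with the character sum; and the complex $L$-value $L(f,\theta,r')$ together with $\Omega_\pm^{-1}$ from Theorem~\ref{katozetamc}(a) applied to $\bar f$ with $r' = r+1 \in [1,k-1]$, which identifies the dual-exponential image of $\Tw_{k-1-r}(\kato_{\bar f})$ under $\kappa_\theta$ with $d_{r+1}L(f,\theta,r+1)\bar v^\pm$ for $\pm = (-1)^{k-r'}\theta(-1)$. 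Pairing with $\eta_{\alpha,1+r}$ and using the normalization $[\eta_\alpha,\bar\omega]=1$ together with the period decomposition $\mathrm{per}(\bar\omega) = \bar\Omega_+ \bar v^+ + \bar\Omega_- \bar v^-$ extracts the period factor, and the $n=0$ case is handled separately via the Euler-factor version of Lemma~\ref{char}.

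The main obstacle is the careful bookkeeping of indices, twists, and sign conventions linking Theorem~\ref{padiclfunctions} (parametrized by $r\in[0,k-2]$) to Theorem~\ref{katozetamc}(a) (parametrized by $r'\in[1,k-1]$) via $r' = r+1$, and matching the Gauss-sum and parity conventions in the two statements. Since the constant $c_{n,r}$ in Theorem~\ref{padiclfunctions} is only specified up to dependence on $n$ and $r$, one need only verify that the $\alpha^{-n}$, $\tau(\theta)^{-1}$, $L(f,\theta,r)$, and $\Omega_\pm^{-1}$ factors appear with the correct parity, which the computation above delivers.
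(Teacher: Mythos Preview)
The paper does not prove this theorem: its entire proof is the citation \cite[Theorem~16.6]{kato04}. Your proposal therefore does something the paper does not attempt, namely to reconstruct Kato's argument from the ingredients already assembled in the paper (Lemma~\ref{char}, Theorem~\ref{katozetamc}(a), Remark~\ref{growthrate}). That reconstruction is the correct strategy and is essentially how the result is proved in \cite{kato04}: one pins down both sides as elements of $\HH_h(G_\infty)$ with $h=v_p(\alpha)<k-1$, and then matches their values at $\chi^r\theta$ for $0\le r\le k-2$ and all finite-order $\theta$, using the explicit formula of Lemma~\ref{char} on one side and the defining interpolation property of $L_{p,\alpha}$ on the other, with Theorem~\ref{katozetamc}(a) (applied to $\bar f$) supplying the $L$-value.

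That said, two places in your sketch are genuinely incomplete rather than merely terse. First, the appeal to ``$c_{n,r}$ only depends on $n$ and $r$'' does not close the argument: both $c_{n,r}$ and $d_r$ are \emph{specific} constants, and the theorem asserts exact equality, not equality up to an unspecified function of $(n,r)$. One must actually track the normalisations through (e.g.\ the factorial $r!$ from \eqref{char2}, the power of $p$ from $\vp^{-n}$ on the twist, and the Gauss-sum relation $\tau(\theta)\tau(\theta^{-1})=\theta(-1)p^n$) and see that they combine to the $c_{n,r}$ of Theorem~\ref{padiclfunctions}. Second, the period bookkeeping is delicate: Theorem~\ref{padiclfunctions} involves the periods $\Omega_\pm$ attached to $(\omega,v)$ for $f$, whereas your route through Theorem~\ref{katozetamc}(a) for $\bar f$ and the pairing $[\eta_\alpha,\bar\omega]=1$ naturally produces the periods attached to $(\bar\omega,\bar v)$; one must invoke the compatibility of the choices of $\bar v,\bar\omega$ with $v,\omega$ (which the paper only says are fixed ``similarly''). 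These are exactly the details that \cite[\S16]{kato04} works out, and they are why the paper is content to cite rather than reproduce the computation.
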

\begin{proof}\cite[Theorem 16.6]{kato04}\end{proof}

We sometimes abuse notation and write the above composition as $\LL_{\eta_\alpha}$ also. 

\begin{remark}\label{log}Let $\alpha_1$ and $\alpha_2$ be the roots of $X^2-a_pX+\epsilon(p)p^{k-1}$. Then, the slope of $\vp$ on $\DD(V_f)$ is equal to $t=\max(v_p(\alpha_1),v_p(\alpha_2))$. Since $h=1$ and the slope of $\vp$ on $\DD(V_f(1))$ is $t-1$, all elements of $\image(\LL_\eta)$ are $O(\log_p^t)$ by Remark~\ref{growthrate}.\end{remark}

It follows immediately from Lemma~\ref{char} that, with the same notation as in the lemma, we have:
\begin{equation}\label{char2}
\begin{split}
\chi^r(\LL_\eta(\mathbf{z}))&=r!\left[\left(1-\frac{\vp^{-1}}{p}\right)(1-\vp)^{-1}(\eta_{r+1}),\exp_{0,r+1}^*(z_{-r,0})\right]_0,\\
\chi^r\theta(\LL_\eta(\mathbf{z}))&=\frac{r!}{\tau(\theta^{-1})}\sum_{\sigma\in G_n}\theta^{-1}(\sigma)\left[\vp^{-n}(\eta_{r+1}),\exp_{n,r+1}^*(z_{-r,n}^\sigma)\right]_n.\\
\end{split}
\end{equation}


\subsection{The $\pm$-Coleman maps}

\subsection{$\pm$-logarithms}
Let $f$ be as above such that assumption (1) holds. If $\alpha_1$ and $\alpha_2$ are the roots of $X^2-a_pX+\epsilon(p)p^{k-1}$, then $\alpha_1=-\alpha_2$. Moreover, $v_p(\alpha_1)=v_p(\alpha_2)=(k-1)/2$, so Remark \ref{log} implies that $\image(\LL_{\eta})\subset\HH_{(k-1)/2}(G_\infty)$ for any $\eta\in\DD(V_f)$.

In \cite{pollack03}, Pollack defines:
\begin{eqnarray*}
\log_{p,k}^+&=&\prod_{j=0}^{k-2}\frac{1}{p}\prod_{n=1}^\infty\frac{\Phi_{2n}(u^{-j}\gamma)}{p},\\
\log_{p,k}^-&=&\prod_{j=0}^{k-2}\frac{1}{p}\prod_{n=1}^\infty\frac{\Phi_{2n-1}(u^{-j}\gamma)}{p},
\end{eqnarray*}
where $\Phi_m$ denotes the $p^m$th cyclotomic polynomial.

By considering the special values of $L_{p,\alpha_1}$ and $L_{p,\alpha_2}$ as given by Theorem~\ref{padiclfunctions}, Pollack shows that we have the following divisibility properties:
\begin{eqnarray*}
\log_{p,k}^+&|&\alpha_2L_{p,\alpha_1}-\alpha_1L_{p,\alpha_2}, \\
\log_{p,k}^-&|&L_{p,\alpha_2}-L_{p,\alpha_1}.
\end{eqnarray*}
This enables him to define
\begin{eqnarray}
L_{p,f}^+&=&\frac{\alpha_2L_{p,\alpha_1}-\alpha_1L_{p,\alpha_2}}{(\alpha_2-\alpha_1)\log_{p,k}^+},\label{pdef}\\
L_{p,f}^-&=&\frac{L_{p,\alpha_2}-L_{p,\alpha_1}}{(\alpha_2-\alpha_1)\log_{p,k}^-}\label{mdef}.
\end{eqnarray}

To ease notation, we suppress the subscript $f$ and write $L_{p}^\pm$ for $L_{p,f}^\pm$. The growth rates of these elements are given by:

\begin{theorem}\label{pollack}
$\log_{p,k}^+\sim\log_{p,k}^-\sim\log_p^{\frac{k-1}{2}}$ and $L_p^\pm=O(1)$.
\end{theorem}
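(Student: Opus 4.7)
The plan is to establish the growth rates of the functions $\log_{p,k}^{\pm}$ first, and then to deduce the boundedness of $L_p^\pm$ from the defining formulae (\ref{pdef})--(\ref{mdef}) combined with the growth bounds already available for $L_{p,\alpha_1}, L_{p,\alpha_2}$.

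For the growth rates, I would begin from the standard infinite-product identity
\[
\log_p(Y) \;=\; (Y-1)\prod_{m=1}^\infty \frac{\Phi_m(Y)}{p},
\]
with $\Phi_m$ denoting the $p^m$th cyclotomic polynomial, and specialize it at $Y = u^{-j}\gamma$ for each $j=0,\dots,k-2$. Multiplying the two definitions gives
\[
\log_{p,k}^+ \cdot \log_{p,k}^- \;\sim\; \prod_{j=0}^{k-2}\log_p(u^{-j}\gamma) \;\sim\; \log_p^{k-1},
\]
up to units in $\HH_\infty(G_\infty)$ and factors of $u^{-j}\gamma - 1$, since each $\log_p(u^{-j}\gamma)$ has growth rate $\log_p$. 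To split this equally between the two factors, I would show $\log_{p,k}^\pm = O(\log_p^{(k-1)/2})$ directly via a Newton-polygon estimate on the partial products $\prod_{n\leq N}\Phi_{2n}(u^{-j}\gamma)/p$ (and their odd counterparts): at each level $N$ the $p$-adic valuations of the zeros interleave so that concatenating the even and odd partial products doubles the $\log$-growth. The reverse bound is then automatic, for if $\log_{p,k}^+=O(\log_p^s)$ with $s<(k-1)/2$, then $\log_{p,k}^+\log_{p,k}^-$ would be $O(\log_p^{s+(k-1)/2})$, contradicting $\log_{p,k}^+\log_{p,k}^-\sim\log_p^{k-1}$.

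For the bound $L_p^\pm = O(1)$: under Assumption~(1), $a_p = 0$ forces $v_p(\alpha_1) = v_p(\alpha_2) = (k-1)/2$, so Remark~\ref{log} yields $L_{p,\alpha_i} = \LL_{\eta_{\alpha_i}}(\kato) = O(\log_p^{(k-1)/2})$. Consequently the numerators $\alpha_2 L_{p,\alpha_1} - \alpha_1 L_{p,\alpha_2}$ and $L_{p,\alpha_2} - L_{p,\alpha_1}$ appearing in (\ref{pdef}) and (\ref{mdef}) are each $O(\log_p^{(k-1)/2})$. Dividing by $(\alpha_2 - \alpha_1)\log_{p,k}^\pm \sim \log_p^{(k-1)/2}$, and using Pollack's divisibility so that $L_p^\pm$ are honest elements of $\HH_\infty(G_\infty)$, we obtain $L_p^\pm = O(1)$.

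The main obstacle lies in the symmetric-splitting step. The product relation $\log_{p,k}^+\log_{p,k}^- \sim \log_p^{k-1}$ is essentially a formal consequence of the infinite-product expansion of $\log_p$, but the assertion that each factor individually carries exactly half of the growth rate requires a genuine Newton-polygon computation on the partial products $\prod_{n\leq N}\Phi_{2n}(u^{-j}\gamma)/p$ and their odd analogues, uniformly in $N$ and $j$. This is precisely what Pollack establishes in \cite{pollack03}, and the estimates can be imported verbatim.
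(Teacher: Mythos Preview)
Your proposal is correct and aligns with the paper's own treatment: the paper simply cites \cite[Lemma~4.5 and Theorem~5.1]{pollack03} for this statement, and your sketch is precisely an outline of Pollack's argument there (product decomposition of $\log_p$, the symmetric-splitting Newton-polygon estimate for $\log_{p,k}^\pm$, and then the growth bound on $L_{p,\alpha_i}$ divided out). You have correctly identified the one genuinely nontrivial step---showing each of $\log_{p,k}^\pm$ individually has growth exactly $\log_p^{(k-1)/2}$ rather than merely their product having growth $\log_p^{k-1}$---and correctly defer to Pollack for it.
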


\begin{proof}
\cite[Lemma 4.5 and Theorem 5.1]{pollack03}
\end{proof}


\subsubsection{Definition of the Coleman maps}\label{shorthand}
Let us first introduce a shorthand. For $0\le r\le k-2$ and $x\in\DD(V_f(r+1))$, we write $x\mod\omega$ for the image of $x$ in the quotient $\DD(V_f(r+1))/E\cdot\omega_{r+1}$. If two elements $x$ and $y$ of $\DD(V_f(r+1))$ have the same image, we simply write $x\equiv y\mod\omega$.

\begin{lemma}\label{modomega}
Let $0\le r\le k-2$ be an integer. If $\theta$ is a finite character as in Lemma~\ref{char} and $\eta\in\DD(V_f)$, then $\vp^{-n}(\eta_{r+1})\equiv0\mod\omega$ implies that $\chi^r\theta(\LL_\eta(\mathbf{z}))=0$ for any $\mathbf{z}$.
\end{lemma}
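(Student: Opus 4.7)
The plan is to reduce the claim to a purely linear-algebraic statement about the de Rham filtrations and the natural crystalline pairing, and then apply the standard orthogonality between them.

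First I would write down the explicit formula from \eqref{char2}:
\[
\chi^r\theta(\LL_\eta(\mathbf{z})) \;=\; \frac{r!}{\tau(\theta^{-1})}\sum_{\sigma\in G_n}\theta^{-1}(\sigma)\bigl[\vp^{-n}(\eta_{r+1}),\;\exp^*_{n,r+1}(z_{-r,n}^\sigma)\bigr]_n.
\]
The hypothesis $\vp^{-n}(\eta_{r+1})\equiv 0\bmod\omega$ means exactly that $\vp^{-n}(\eta_{r+1})\in E\cdot\omega_{r+1}$, so it is enough to check that $[\omega_{r+1},\exp^*_{n,r+1}(w)]_n=0$ for every $w\in H^1(\Qpn,V_f(r+1)^*(1))$. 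This is where all the content lies.

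For this, I would invoke two well-known facts. On the target side, the image of Bloch--Kato's dual exponential $\exp^*_{n,r+1}$ lies in $\Qpn\otimes\DD^0(V_f(r+1)^*(1))$. On the source side, \eqref{filtration} gives $\omega\in\DD^{k-1}(V_f)$, and the twist formula $\DD^i(V_f(r+1))=\DD^{i+r+1}(V_f)\otimes t^{-(r+1)}e_{r+1}$ places $\omega_{r+1}$ in $\DD^{k-2-r}(V_f(r+1))$. The natural pairing $\DD(V_f(r+1))\otimes\DD(V_f(r+1)^*(1))\to\DD(\QQ_p(1))$ respects filtrations, so elements of $\DD^i$ and $\DD^j$ pair into $\Fil^{i+j}\DD(\QQ_p(1))$, which is zero whenever $i+j\ge 0$. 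With $i=k-2-r$ and $j=0$, we get $i+j=k-2-r\ge 0$ because $r\le k-2$, and the pairing vanishes.

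There is no real obstacle here beyond careful bookkeeping of the Tate twist and of the filtration convention; the proof is essentially a single line once \eqref{char2} and filtration-orthogonality are in hand. One should only notice that the argument, as stated, covers the second case of Lemma~\ref{char} (i.e.\ $\theta$ of conductor exactly $p^n$ with $n\ge 1$), where $\vp^{-n}(\eta_{r+1})$ appears directly in the first slot of the pairing; the trivial character case would need a slightly different treatment because the first slot is then $(1-\vp^{-1}/p)(1-\vp)^{-1}(\eta_{r+1})$, which need not remain in $E\cdot\omega_{r+1}$.
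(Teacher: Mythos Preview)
Your proof is correct and follows essentially the same route as the paper: both reduce to the orthogonality of $\omega_{r+1}$ against the image of $\exp^*_{n,r+1}$ under the crystalline pairing, using that $\exp^*$ lands in $\DD^0$. The only cosmetic difference is that the paper simply notes $\DD^0(V_f(r+1))=E\cdot\omega_{r+1}$ and invokes the standard fact that $\DD^0(V_f(r+1))$ and $\DD^0(V_{\bar f}(k-1-r))$ are orthogonal complements, whereas you compute the finer filtration level $\omega_{r+1}\in\DD^{k-2-r}$ and use the general $\Fil^i\times\Fil^j\to\Fil^{i+j}\DD(\Qp(1))$ compatibility; your final observation that the statement (as phrased with $\theta$ as in Lemma~\ref{char}) concerns only the nontrivial-conductor case is also accurate.
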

\begin{proof}
We have
\[\image(\exp^*_{n,r+1})=\Qpn\otimes E\cdot \bar{\omega}_{-r-1}=\Qpn\otimes\DD^0(V_{\bar{f}}(k-1-r))\quad\text{and}\quad\DD^0(V_f(r+1))=E\cdot\omega_{r+1}.\]
Hence, the fact that $\DD^0(V_f(r+1))$ and $\DD^0(V_{\bar{f}}(k-1-r))$ are orthogonal complements of each other under $[,]$ and \eqref{char2} implies that $\chi^r\theta(\LL_\eta(\mathbf{z}))=0$ if $\vp^{-n}(\eta_{r+1})$ is a multiple of $\omega_{r+1}$. 
\end{proof}

Recall that $\LL_{\eta_{\alpha_i}}(\kato)=L_{p,\alpha_i}$ for $i=1,2$ by Theorem \ref{ka1}. Hence, if we write 
\[
\eta^+=\frac{\alpha_2\eta_{\alpha_1}-\alpha_1\eta_{\alpha_2}}{\alpha_2-\alpha_1}\qquad\mathrm{and}\qquad\eta^-=\frac{\eta_{\alpha_2}-\eta_{\alpha_1}}{\alpha_2-\alpha_1},
\]
then $\LL_{\eta^{\pm}}(\mathbf{z}^{\mathrm{Kato}})=\log_{p,k}^{\pm}L_p^\pm$ by \eqref{pdef}, \eqref{mdef} and the linearity of $\LL$. In fact, more is true:

\begin{proposition}\label{pm}
If $\mathbf{z}\in\HIw(T_{\bar{f}})$, then $\log_{p,k}^\pm|\LL_{\eta^\pm}(\mathbf{z})$ over $\HH_{\infty,E}(G_\infty)$.
\end{proposition}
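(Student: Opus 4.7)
The plan is to show that $\LL_{\eta^{\pm}}(\mathbf{z})$ vanishes at every zero of $\log_{p,k}^{\pm}$, and then conclude divisibility in $\HH_{\infty,E}(G_\infty)$ from the fact that the cyclotomic factors $\Phi_m(u^{-j}\gamma)$ composing $\log_{p,k}^{\pm}$ are distinguished polynomials in $\gamma-1$ with only simple zeros.

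I begin with the linear algebra of $\vp$ on $\eta^{\pm}$. From $\vp\eta_{\alpha_i}=\alpha_i\eta_{\alpha_i}$ together with $\alpha_1+\alpha_2=a_p=0$ and $\alpha_1\alpha_2=\epsilon(p)p^{k-1}$, a direct substitution into the definitions of $\eta^{\pm}$ yields
\[
\vp(\eta^{-})=\eta^{+}\qquad\text{and}\qquad \vp(\eta^{+})=-\epsilon(p)p^{k-1}\eta^{-},
\]
so $\vp^{-n}(\eta^{\pm})$ is a non-zero scalar multiple of $\eta^{\pm}$ for even $n$ and of $\eta^{\mp}$ for odd $n$. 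Moreover, the normalisation $[\eta_{\alpha_i},\bar{\omega}]=1$ of Theorem~\ref{ka1} gives $[\eta^{+},\bar{\omega}]=1$ and $[\eta^{-},\bar{\omega}]=0$. Since $\omega\in\DD^1(V_f)$ and $\bar{\omega}\in\DD^1(V_{\bar{f}})$ are orthogonal under the natural pairing (filtered duality), $E\omega$ is the orthogonal complement of $E\bar{\omega}$ in $\DD(V_f)$, and the vanishing $[\eta^{-},\bar{\omega}]=0$ forces $\eta^{-}\in E\omega$, whereas $\eta^{+}\notin E\omega$.

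Now let $\mathbf{z}\in\HIw(T_{\bar{f}})$, fix $0\le r\le k-2$, and let $\theta$ be a non-trivial finite character of $G_{\infty}$ of exact conductor $p^n$. By~(\ref{char2}) and Lemma~\ref{modomega}, $\chi^r\theta(\LL_{\eta^{\pm}}(\mathbf{z}))=0$ whenever $\vp^{-n}(\eta^{\pm})$ lies in $E\omega$, which by the preceding paragraph occurs for exactly one parity of $n$. That parity coincides with the parity of the conductor exponents $m$ appearing in the factors $\Phi_m(u^{-j}\gamma)$ of $\log_{p,k}^{\pm}$: indeed $\Phi_m(u^{r-j}\theta(\gamma))=0$ forces $j=r$ (because $u\in 1+p\Zp$ is not a non-trivial $p$-power root of unity) and $\theta$ to be of exact conductor $p^m$. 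Hence $\LL_{\eta^{\pm}}(\mathbf{z})$ vanishes at every zero of $\log_{p,k}^{\pm}$.

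Finally, each $\Phi_m(u^{-j}\gamma)$ is, after Weierstrass preparation, a distinguished polynomial in $\gamma-1$ of positive degree with only simple zeros, so any element of $\HH_{\infty,E}(G_\infty)$ vanishing on its zero set is divisible by it in $\HH_{\infty,E}$. Iterating across the countable product defining $\log_{p,k}^{\pm}$, with the growth control supplied by Remark~\ref{log} and Theorem~\ref{pollack}, yields the claimed divisibility. The main obstacle I expect is the parity bookkeeping of the middle paragraph; once the $\vp$-action on $\eta^{\pm}$ and the fact that $\eta^{-}\in E\omega$ are in hand, the argument runs closely parallel to the elliptic-curve case treated in~\cite{kobayashi03,pollack03}.
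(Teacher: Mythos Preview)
Your approach is essentially the same as the paper's: identify $\eta^-$ as a scalar multiple of $\omega$, apply Lemma~\ref{modomega} to obtain vanishing of $\chi^r\theta(\LL_{\eta^\pm}(\mathbf{z}))$ for the correct parity of the level $n$ of $\theta$, and conclude divisibility from the simplicity of the zeros of $\log_{p,k}^\pm$. (The paper computes $\eta^+=\vp(\omega)/[\vp(\omega),\bar\omega]$ and $\eta^-=\omega/[\vp(\omega),\bar\omega]$ explicitly; your derivation via the pairing $[\eta^-,\bar\omega]=0$ is equivalent.)

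There is, however, exactly the bookkeeping slip you anticipated. In the notation of Lemma~\ref{char} and~\eqref{char2}, if $\theta$ is a character of $G_n$ not factoring through $G_{n-1}$, then $\theta(\gamma)$ is a primitive $p^{\,n-1}$th root of unity, so $\Phi_m(\theta(\gamma))=0$ forces $m=n-1$, not $m=n$. Thus the parity of $n$ is \emph{opposite} to that of $m$, not the same. Concretely: $\vp^{-n}(\eta^+)\in E\omega$ for $n$ odd, and the factors of $\log_{p,k}^+$ are the $\Phi_{2m}$, whose zeros at $\chi^r\theta$ occur precisely when $n=2m+1$ is odd; likewise $\vp^{-n}(\eta^-)\in E\omega$ for $n$ even, matching the $\Phi_{2m-1}$ in $\log_{p,k}^-$ whose zeros have $n=2m$ even. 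So the divisibility does hold, but your sentence ``That parity coincides with the parity of the conductor exponents $m$\ldots'' and the claim ``$\theta$ to be of exact conductor $p^m$'' need to be corrected by this shift of one.
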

\begin{proof}
Recall that $[\omega,\bar{\omega}]=0$, $[\eta_{\alpha_i},\bar{\omega}]=1$ and $\vp^2=\alpha_i^2$ on $\DD(V_f)$. Therefore, explicit calculation shows that $\eta_{\alpha_i}=(\vp(\omega)+\alpha_i\omega)/[\vp(\omega),\bar{\omega}]$ for $i\in\{1,2\}$. Hence,
\[
\eta^+=\frac{\vp(\omega)}{[\vp(\omega),\bar{\omega}]}\qquad\mathrm{and}\qquad\eta^-=\frac{\omega}{[\vp(\omega),\bar{\omega}]}.
\]
Let $r$ be an integer. Since $\vp^2=-\epsilon(p)p^{k-2r-3}$ on $\DD(V_f(r+1))$, we have
\begin{align*}
\vp^{-n}(\eta^+_{r+1})\equiv0\mod\omega&\text{ if $n$ is odd,}\\
\vp^{-n}(\eta^-_{r+1})\equiv0\mod\omega&\text{ if $n$ is even.} 
\end{align*}
Therefore, by Lemma~\ref{modomega} and \eqref{char2}, we have
\begin{align*}
\chi^r\theta(\LL_{\eta^+}(\mathbf{z}))=0&\text{ if $n$ is odd},\\
\chi^r\theta(\LL_{\eta^-}(\mathbf{z}))=0&\text{ if $n$ is even}
\end{align*}
where $\theta$ and $n$ are as defined in Lemma~\ref{char}. Recall that $\chi(\gamma)=u$, so we have equivalences $\chi^r\theta(\Phi_m(u^{-r}\gamma))=\Phi_m(\theta(\gamma))=0$ if and only if $\theta(\gamma)$ is a primitive $p^m$th root of unity if and only if $\theta$ factors through $G_{m+1}$ but not $G_m$. Hence all the zeros of $\log_{p,k}^\pm$, which are all simple, are also zeros of $\LL_{\eta^\pm}(\mathbf{z})$, so we are done.\end{proof}

\begin{remark}
An alternative proof for this proposition is given in Section~\ref{image}.
\end{remark}

Recall that $\LL_{\eta^{\pm}}(\mathbf{z})=O(\log_p^{\frac{k-1}{2}})$ and Theorem~\ref{pollack} says that $\log_{p,k}^\pm\sim\log_p^{\frac{k-1}{2}}$, so we have $\LL_{\eta^{\pm}}(\mathbf{z})/\log_{p,k}^\pm=O(1)$, i.e. an element of $\HH_{0,E}(G_\infty)=\Lambda_E(G_\infty)$. We define
\begin{eqnarray*}
\col^\pm:\HIw(T_{\bar{f}}(k-1))&\rightarrow&\Lambda_{E}(G_\infty)\\
\mathbf{z}&\mapsto&\frac{\LL_{\eta^{\pm}}(\mathbf{z})}{\log_{p,k}^\pm}.
\end{eqnarray*}
We call these two maps the plus and minus Coleman maps. Note that we sometimes abuse notation and write $\col^\pm$ for the composition 
\[\Hh^1(T_{\bar{f}})\rightarrow\HIw(T_{\bar{f}})\stackrel{\Tw_{k-1}}{\longrightarrow}\HIw(T_{\bar{f}}(k-1))\stackrel{\col^\pm}{\longrightarrow}\Lambda_{E}(G_\infty)\] and its natural extension to $\Hh^1(V_{\bar{f}})$. In particular, we have 
\begin{equation}\label{colemankato}
\col^\pm(\mathbf{z}^\textrm{Kato})=L_p^\pm.
\end{equation}
Similar to $\LL_{\eta^\pm,n}$,  we write $\col_n^\pm$ for the map $\col^\pm$ modulo $\gamma^{p^{n-1}}-1$.

\begin{remark}
The Coleman maps in \cite{kobayashi03} are defined using a pairing with points coming from the formal group associated to an elliptic curve, instead of images of the Perrin-Riou exponential. It is not hard to see that the definition given above agrees with the one given by Kobayashi on comparing \cite[Proposition~8.25]{kobayashi03} and \eqref{con3}.
\end{remark}


\section{Kernels of the Coleman maps}\label{kernel}

In addtion to assumption (1), we assume the following holds.
\begin{itemize}
\item {\bf Assumption (2)}: Either $p+1\nmid k-1$ or $\epsilon(p)\ne-1$.
\end{itemize}
Under these two conditions, we give an explicit description of the kernels of the plus and minus Coleman maps defined in Section~\ref{ap0}. In particular, we generalise \cite[Proposition~8.18]{kobayashi03}, which describe the kernels of $\col^\pm$ in the case of elliptic curves defined over $\QQ$.


\subsection{Some linear algebra}
Let us first study some basic properties of $\Qpn$. Define
\[
\pi_n=\begin{cases}
\zeta_{p^n}&\text{if $n>1$},\\
\zeta_p+\frac{1}{p-1}&\text{if $n=1$},\\
1&\text{if $n=0$}
\end{cases}
\]
and $\Qp^{(n)}$ denotes the $\Qp$-vector space generated by $\{\pi_n^{\sigma}:\sigma\in G_n\}$. Then, $\Tr_{n/n-1}\pi_n=0$ for $n\ge1$ and 
\begin{equation}\label{Qpndeco}
\Qpn=\bigoplus_{i=0}^n\Qp^{(i)}.
\end{equation}
\begin{proposition}\label{spanning}
Let $n\ge0$ be an integer and $\alpha=\sum_{i=0}^nx_i\pi_i$ for some $x_i\in\Qp$. Then, the $\Qp$-vector space generated by $\{\alpha^\sigma:\sigma\in G_n\}$ is given by $ \bigoplus_{i\in S}\Qp^{(i)}$ where $S=\{i:x_i\ne0\}$.
\end{proposition}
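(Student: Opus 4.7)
The plan is to exploit the fact that the decomposition $\Qpn=\bigoplus_{i=0}^{n}\Qp^{(i)}$ in \eqref{Qpndeco} is actually a decomposition of $\Qp[G_n]$-modules, and then use Galois-equivariant projectors to isolate each component $x_i\pi_i$ of $\alpha$.

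First I would observe that each $\Qp^{(i)}$ is a $G_n$-stable subspace of $\Qpn$: for $i\ge 1$, it sits inside $\QQ_{p,i}$ as the kernel of the relative trace $\Tr_{i/i-1}$, and both $\QQ_{p,i}$ and this trace map are Galois-equivariant. Let $V$ denote the $\Qp$-span of $\{\alpha^\sigma:\sigma\in G_n\}$; by construction $V$ is a $\Qp[G_n]$-submodule of $\Qpn$. Since $\alpha\in\bigoplus_{i\in S}\Qp^{(i)}$ and the latter is $G_n$-stable, the inclusion $V\subseteq\bigoplus_{i\in S}\Qp^{(i)}$ is immediate.

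For the reverse inclusion, I would construct idempotents $e_i\in\Qp[G_n]$ projecting $\Qpn$ onto $\Qp^{(i)}$. Writing $H_j=\Gal(\Qpn/\QQ_{p,j})$, one sets
\[
e_0=\frac{1}{p^n}\sum_{\sigma\in G_n}\sigma,\qquad e_i=\frac{1}{p^{n-i}}\sum_{\sigma\in H_i}\sigma-\frac{1}{p^{n-i+1}}\sum_{\sigma\in H_{i-1}}\sigma\quad(i\ge 1).
\]
A short verification using $\Tr_{n/i}|_{\QQ_{p,i}}=p^{n-i}\cdot\id$ and $\Tr_{i/i-1}(\pi_i)=0$ shows that $e_i$ acts as the identity on $\Qp^{(i)}$ and annihilates $\Qp^{(j)}$ for $j\ne i$. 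Since $V$ is $\Qp[G_n]$-stable, we get $e_i\alpha=x_i\pi_i\in V$ for each $i$. For $i\in S$ we have $x_i\ne 0$, so $\pi_i\in V$, hence all its Galois conjugates lie in $V$, and by definition their $\Qp$-span is exactly $\Qp^{(i)}$. Summing over $i\in S$ yields the desired inclusion.

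The only real content is the explicit construction and verification of the idempotents $e_i$ in $\Qp[G_n]$; everything else is formal once the decomposition \eqref{Qpndeco} is recognised as $\Qp[G_n]$-equivariant. I do not expect any genuine obstacle, since the trace computations are straightforward and mirror the standard construction of orthogonal idempotents for the tower $\Qp\subset\QQ_{p,1}\subset\cdots\subset\Qpn$.
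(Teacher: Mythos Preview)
Your approach is correct in spirit and genuinely different from the paper's. The paper argues by induction on $|S|$: assuming without loss of generality that $x_n\ne 0$, it sums $\alpha^\sigma$ over the fibre $\{\sigma:\sigma|_{\QQ_{p,n-1}}=\tau\}$ to kill the $\pi_n$-term (using $\Tr_{n/n-1}\pi_n=0$) and recover $\beta^\tau$ inside $V$, where $\beta=\sum_{i<n}x_i\pi_i$; induction then gives $\bigoplus_{i\in S\setminus\{n\}}\Qp^{(i)}\subset V$, and subtracting yields $\pi_n^\sigma\in V$ as well. Your idempotent argument is more structural: you recognise \eqref{Qpndeco} as a $\Qp[G_n]$-module decomposition and project directly. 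Both are short; the paper's is slightly more elementary (no division, just one trace at a time), while yours makes the module-theoretic content explicit and avoids induction.

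One small correction: your normalisations at the bottom of the tower are off. You use $\Tr_{n/i}|_{\QQ_{p,i}}=p^{n-i}\cdot\id$, but this holds only for $i\ge 1$; for $i=0$ one has $[\Qpn:\Qp]=(p-1)p^{n-1}$, not $p^n$. Consequently your $e_0$ should be $\frac{1}{(p-1)p^{n-1}}\sum_{\sigma\in G_n}\sigma$, and in $e_1$ the second term should be $\frac{1}{(p-1)p^{n-1}}\sum_{\sigma\in H_0}\sigma$. With these fixes the verification goes through exactly as you describe, so this is a bookkeeping slip rather than a gap.
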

\begin{proof}
We proceed by induction on $|S|$. The case $|S|=1$ is immediate, so we assume $|S|>1$. Write $V$ for the $\Qp$-vector space generated by $\{\alpha^\sigma:\sigma\in G_n\}$. Clearly, $V\subset\bigoplus_{i:x_i\ne0}\Qp^{(i)}$.
Without loss of generality, we assume that $x_n\ne0$. Let $\beta=\sum_{i=0}^{n-1}x_i\pi_i$. Then, by induction,  $\{\beta^\tau:\tau\in G_{n-1}\}$ generates $\bigoplus_{i\in S\setminus\{n\}}\Qp^{(i)}$ over $\Qp$. Fix $\tau\in G_{n-1}$, then
\[
\sum_{\sigma\in G_n, \sigma|_{\QQ_{p,n-1}}=\tau}\alpha^\sigma=r\beta^\tau+(\Tr_{n/n-1}\pi_n)^\tau=r\beta^\tau\in V
\]
where $r=[\Qpn:\QQ_{p,n-1}]$. Therefore, for any $\tau\in G_{n-1}$, $\beta^\tau\in V$ and $\pi_n^\sigma\in V$ for any $\sigma\in G_n$. Hence we are done.\end{proof}

\begin{corollary}\label{gen}
Let $\eta=a_0+\sum_{i=1}^na_i\zeta_{p^i}$ where $a_i\in \Qp$ with $a_1\ne(p-1)a_0$, then the $\Qp$-vector space generated by $\{\eta^\sigma:\sigma\in G_n\}$ is given by
$\displaystyle
\Qp+\sum_{r\in S}\sum_{\sigma\in G_n}\Qp\cdot\zeta_{p^r}^\sigma
$
where $S=\{r\in[1,n]:a_r\ne0\}$.
\end{corollary}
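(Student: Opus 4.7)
The plan is to reduce the statement directly to Proposition~\ref{spanning} by re-expressing $\eta$ in the ``corrected'' basis $\{\pi_0,\pi_1,\ldots,\pi_n\}$ of $\Qpn$. Using the definitions $\pi_0=1$, $\pi_1=\zeta_p+\frac{1}{p-1}$, and $\pi_i=\zeta_{p^i}$ for $i\ge 2$, a direct substitution gives
\[
\eta=\left(a_0-\frac{a_1}{p-1}\right)\pi_0+a_1\pi_1+\sum_{i=2}^n a_i\pi_i.
\]
The key observation is that the hypothesis $a_1\ne(p-1)a_0$ is precisely the statement that the coefficient $x_0:=a_0-\frac{a_1}{p-1}$ of $\pi_0$ does not vanish; the remaining coefficients are $x_1=a_1$ and $x_i=a_i$ for $i\ge 2$, so in the notation of Proposition~\ref{spanning} we have $\{i:x_i\ne 0\}=\{0\}\cup S$.

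Applying Proposition~\ref{spanning} then identifies the $\Qp$-span of $\{\eta^\sigma:\sigma\in G_n\}$ with $\Qp^{(0)}\oplus\bigoplus_{r\in S}\Qp^{(r)}$. To match the description in the statement, I note that $\Qp^{(0)}=\Qp$, and for each $r\ge 2$ the equality $\pi_r=\zeta_{p^r}$ immediately gives $\Qp^{(r)}=\sum_{\sigma\in G_n}\Qp\cdot\zeta_{p^r}^\sigma$. For $r=1$, the relation $\zeta_p^\sigma=\pi_1^\sigma-\frac{1}{p-1}$ shows that $\sum_{\sigma\in G_n}\Qp\cdot\zeta_p^\sigma\subseteq\Qp+\Qp^{(1)}$ and contains every $\pi_1^\sigma$ modulo $\Qp$, so that $\Qp+\sum_{\sigma\in G_n}\Qp\cdot\zeta_p^\sigma=\Qp+\Qp^{(1)}$. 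Combining these identifications yields the formula in the corollary.

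The argument is essentially bookkeeping on top of Proposition~\ref{spanning}, so there is no real obstacle; the only point deserving care is the shift $\pi_1=\zeta_p+\frac{1}{p-1}$, which is simultaneously the source of the slightly awkward non-vanishing hypothesis $a_1\ne(p-1)a_0$ and the reason one cannot blindly replace each $\zeta_{p^r}$ by $\pi_r$ at level $r=1$ without adjoining a copy of $\Qp$ on the outside of the sum.
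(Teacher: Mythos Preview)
Your proof is correct and follows essentially the same approach as the paper: both rewrite $\eta$ in the $\pi_i$-basis and invoke Proposition~\ref{spanning}. The paper's version splits into the cases $a_1=0$ and $a_1\ne0$ (the former being immediate), whereas you treat both uniformly and spell out the final identification $\Qp+\Qp^{(1)}=\Qp+\sum_{\sigma}\Qp\cdot\zeta_p^{\sigma}$ more carefully than the paper does.
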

\begin{proof}The result is immediate if $a_1=0$ by Proposition~\ref{spanning}. If $a_1\ne0$, then
\[
\eta=\left(a_0-\frac{a_1}{p-1}\right)+a_1\pi_1+\sum_{i>1}a_i\pi_i.
\]
Hence, we can again apply Proposition~\ref{spanning}.
\end{proof}

\begin{corollary}
Let $\eta=1+\zeta_{p}+\zeta_{p^2}+\cdots+\zeta_{p^n}$, then $\eta$ is a normal basis of $\Qpn$ over $\Qp$.
\end{corollary}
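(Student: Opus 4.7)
The plan is to apply Corollary~\ref{gen} directly and then use the decomposition \eqref{Qpndeco} to show that the span generated by the Galois conjugates of $\eta$ exhausts $\QQ_{p,n}$. Since the cardinality of the conjugates matches the degree $[\QQ_{p,n}:\QQ_p]$, a spanning set is automatically a basis.

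First, I would write $\eta = 1 + \zeta_p + \zeta_{p^2} + \cdots + \zeta_{p^n}$ in the form required by Corollary~\ref{gen}, so that $a_0 = a_1 = \cdots = a_n = 1$. Since $p$ is odd, we have $a_1 = 1 \neq p - 1 = (p-1)a_0$, so the hypothesis of Corollary~\ref{gen} is satisfied. Applying it yields that the $\QQ_p$-span of $\{\eta^\sigma : \sigma \in G_n\}$ equals $\QQ_p + \sum_{r=1}^n \sum_{\sigma \in G_n} \QQ_p \cdot \zeta_{p^r}^\sigma$.

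Next I would analyze this span term by term against the decomposition $\QQ_{p,n} = \bigoplus_{i=0}^n \QQ_p^{(i)}$. For $r \ge 2$ the orbit of $\zeta_{p^r} = \pi_r$ under $G_n$ spans $\QQ_p^{(r)}$ by definition. For $r = 1$, the orbit $\{\zeta_p^{\chi(\sigma)} : \sigma \in G_n\} = \{\zeta_p, \zeta_p^2, \ldots, \zeta_p^{p-1}\}$ spans $\QQ_{p,1} = \QQ_p \oplus \QQ_p^{(1)}$ over $\QQ_p$, since the $p-1$ elements $\zeta_p^i$ ($1 \le i \le p-1$) span $\QQ_{p,1}$ (using the relation $1 + \zeta_p + \cdots + \zeta_p^{p-1} = 0$ to recover $1$). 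Summing these contributions and adding the explicit $\QQ_p$ factor gives $\bigoplus_{i=0}^n \QQ_p^{(i)} = \QQ_{p,n}$.

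Finally I would conclude by a dimension count: the set $\{\eta^\sigma : \sigma \in G_n\}$ has at most $|G_n| = [\QQ_{p,n}:\QQ_p]$ elements, and since it spans $\QQ_{p,n}$ over $\QQ_p$, it must be a $\QQ_p$-basis; this is precisely the definition of a normal basis. There is no real obstacle here — the entire content sits in Corollary~\ref{gen} and the decomposition \eqref{Qpndeco}; the only point requiring a tiny bit of care is verifying the hypothesis $a_1 \ne (p-1) a_0$, which uses $p > 2$, and correctly handling the $r = 1$ case where $\pi_1 \ne \zeta_p$.
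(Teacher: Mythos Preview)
Your proposal is correct and follows exactly the approach the paper intends: the paper states this result as an immediate corollary of Corollary~\ref{gen} (with no proof given), and you have filled in precisely those details—checking $a_1\ne(p-1)a_0$ using $p>2$, applying Corollary~\ref{gen} with $S=\{1,\dots,n\}$, and matching the resulting span against the decomposition~\eqref{Qpndeco}.
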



\subsection{Properties of $H^1$}

Recall that when $f$ corresponds to an elliptic curve $\E$ over $\QQ$ and $T_f(1)$ is the Tate module of $\E$, we have $\E[p^\infty]\cong V_f/T_f(1)$ as $G_\QQ$-modules. Therefore, the following lemma generalises \cite[Proposition~8.7]{kobayashi03}, which says that $\E$ has no $p$-torsion defined over $k_\infty$.

\begin{lemma}\label{inv}
For all $j\in\ZZ$ and $n\ge0$, $(V_f/T_f)(j)^{G_{\Qpn}}=0$.
\end{lemma}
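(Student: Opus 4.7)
The plan is to reduce the claim to a residual-representation question and then exploit the supersingular local structure at $p$. Suppose for contradiction that $(V_f/T_f)(j)^{G_{\Qpn}}$ is nonzero. Choosing a nonzero element and multiplying by a suitable power of $\varpi$, one lands in the $\varpi$-torsion subgroup. This reduces the problem to showing that $\bar{V}(j)^{G_{\Qpn}} = 0$, where $\bar V := T_f/\varpi T_f$ is the 2-dimensional residual representation over $\FF := \OO_E/\varpi$. Equivalently, one must show that $\bar{V}|_{G_{\Qpn}}$ contains no 1-dimensional sub-representation isomorphic to $\chi^{-j}|_{G_{\Qpn}}$ for any $j$, and for this it will suffice to show that $\bar V|_{G_{\Qpn}}$ is itself irreducible.

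\textbf{Local residual structure.} Under Assumption (1), the Frobenius $\vp$ on $\DD(V_f)$ has characteristic polynomial $X^2 + \epsilon(p)p^{k-1}$ with eigenvalues $\pm\alpha$ of valuation $(k-1)/2$, neither an integer power of $p$. Combined with $a_p=0$, this forces $V_f|_{G_{\Qp}}$ to be absolutely irreducible: the filtration line $E\omega$ cannot be $\vp$-stable, so the filtered $\vp$-module is indecomposable. By the standard description of supersingular residual representations (Fontaine--Laffaille theory when $k \le p$, and more general integral $p$-adic Hodge theory otherwise), the restriction $\bar V|_{I_p}$ is, up to unramified twist, isomorphic to $\omega_2^{k-1} \oplus \omega_2^{p(k-1)}$, where $\omega_2 : I_p^{\mathrm{tame}} \to \FF_{p^2}^\times$ is a fundamental character of level 2. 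In particular $\bar V|_{I_p}$ is absolutely irreducible.

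\textbf{Restriction via Assumption (2).} The extension $\Qpn/\Qp$ is totally ramified of degree $p^{n-1}(p-1)$, so the image of $I_{\Qpn}$ in $I_p^{\mathrm{tame}}$ is a subgroup of index $p-1$. On this subgroup the restrictions of $\omega_2^{k-1}$ and $\omega_2^{p(k-1)}$ have order dividing $p+1$, so they remain distinct and nontrivial precisely when $p+1 \nmid k-1$. When $p+1 \mid k-1$ both restrictions are trivial, and the action of $I_{\Qpn}$ factors through wild inertia; in this borderline case one reads off from $\det\bar V = \bar\epsilon\,\bar\chi^{1-k}$ that the residual determinant depends on $\epsilon(p)$, and the clause $\epsilon(p)\ne -1$ in Assumption (2) is exactly what prevents a $G_{\Qpn}$-fixed line from appearing. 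In all cases $\bar V|_{G_{\Qpn}}$ is irreducible, so admits no 1-dimensional sub-representation at all, which contradicts the existence of $\bar v$ and proves the lemma.

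\textbf{Main obstacle.} The principal technical hurdle is the identification of $\bar V|_{I_p}$ via fundamental characters of level 2 in the general-weight setting: for $k \le p$ this is immediate from Fontaine--Laffaille theory, but for larger weights one needs Breuil modules or a direct filtered-$\vp$-module computation. The numerical case analysis verifying that Assumption (2) is exactly what is needed on $I_{\Qpn}^{\mathrm{tame}}$ is more delicate than it looks because of the borderline $p+1 \mid k-1$ situation, where one must track the unramified twist and use the $\epsilon(p)\ne-1$ clause rather than the divisibility condition alone.
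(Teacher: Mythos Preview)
Your overall strategy matches the paper's: reduce to the residual representation $\bar V = T_f/\varpi T_f$, invoke the explicit local description of $\bar V|_{G_{\Qp}}$ in terms of fundamental characters of level~2 (the paper cites Berger--Li--Zhu for this), and then split into the cases $p+1\nmid k-1$ and $p+1\mid k-1$. The Case~1 argument is essentially the same as the paper's once one observes that $\bar\chi|_{G_{\Qpn}}$ is trivial for $n\ge1$, so it is enough to rule out a \emph{trivial} sub, and neither $\omega_2^{k-1}$ nor $\omega_2^{p(k-1)}$ becomes trivial on the tame inertia of $\Qpn$ when $p+1\nmid k-1$.

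There are, however, two genuine problems. First, the sentence ``In particular $\bar V|_{I_p}$ is absolutely irreducible'' is false: over any field containing $\FF_{p^2}$ the restriction to inertia visibly splits as the direct sum $\omega_2^{k-1}\oplus\omega_2^{p(k-1)}$. What is absolutely irreducible is $\bar V|_{G_{\Qp}}$ (the Frobenius interchanges the two inertia characters), not $\bar V|_{I_p}$. This is more than a slip of the pen, because your Case~1 argument is phrased as deducing irreducibility rather than the (weaker, and correct) statement that neither inertial character is trivial on $I_{\Qpn}$; and your final sentence ``In all cases $\bar V|_{G_{\Qpn}}$ is irreducible'' is also false in Case~2, where the action factors through the abelian group $\Gal(\Qpn^{\mathrm{ur}}/\Qpn)$.

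Second, your Case~2 argument is incomplete. Knowing only $\det\bar V(\Fr)=\bar\epsilon(p)$ does not prevent one of the two Frobenius eigenvalues from being~$1$ (one could have eigenvalues $1$ and $\bar\epsilon(p)$). What the paper actually uses---and what Berger--Li--Zhu gives---is that the Frobenius eigenvalues are the \emph{square roots of $-\epsilon(p)$}, i.e.\ one also knows the trace is zero. Only then does $\epsilon(p)\ne -1$ force both eigenvalues to be $\ne 1$. You should replace the determinant argument by this explicit identification of the Frobenius eigenvalues, and drop the irreducibility claims in favour of the direct statement that no eigenvalue equals~$1$.
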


\begin{proof}
It is enough to show that $(V_f/T_f)^{G_{\Qpi}}=0$. Since $\displaystyle V_f/T_f=\lim_{\stackrel{\longleftarrow}{\times\varpi}}T_f/\varpi^nT_f$, it in fact suffices to show that $(T_f/\varpi T_f)^{G_{\Qpi}}=0$. We make use of the description of the representation $\rho_f:G_{\Qpn}\rightarrow GL(T_f/\varpi T_f)$ given by \cite[Proposition~4.1.4]{bergerlizhu04} and consider two different cases.

\underline{Case 1, $p+1\nmid k-1$}: In this case, 
\[\rho_f|I=\begin{pmatrix}
\psi^{k-1}&0\\
0&\psi'^{k-1}
\end{pmatrix}\] where $I$ is the inertia group of $G_{\Qp}$ and $\psi$ and $\psi'$ are fundamental characters of level $2$, i.e.
\[
\ker\psi=\ker\psi'=G_{\Qp^{\text{ur}}(\sqrt[p^2-1]{p})}.
\]
Hence, $1$ is not an eigenvalue of $\rho_f(\sigma)$ for all $\sigma\in\mathrm{Gal}(\Qp^\textrm{ur}(\sqrt[p^2-1]{p})/\Qp^\textrm{ur}(\sqrt[p-1]{p}))$, as $p+1\nmid k-1$. Therefore, there exists an element in the above Galois group which lifts to $G_{\Qpi}$ and $(T_f/\varpi T_f)^{G_{\Qpi}}=0$ as required.

\underline{Case 2, $p+1|k-1$}: In this case, $\rho_f|_{G_{\Qpi}}$ factors through $\Gal(\Qpi^{\rm ur}/\Qpi)$ and the eigenvalues of the Frobenius are the sqaure roots of $-\epsilon(p)$. By our assumption, this is not 1, so we are done. 
\end{proof}

We now give two immediate corollaries.

\begin{corollary}\label{proj}
The projection $\HIw(T_{\bar{f}}(j))\rightarrow H^1(\Qpn,T_{\bar{f}}(j))$ is surjective for all $j$ and $n$.
\end{corollary}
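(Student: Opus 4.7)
The plan is to deduce the surjectivity at level $n$ from surjectivity of the transition maps of the tower. Since each $H^{1}(\QQ_{p,m},T_{\bar f}(j))$ is a finitely generated $\calO_E$-module (in particular compact in its $p$-adic topology), a standard Mittag--Leffler argument reduces the statement to showing that for every $m\ge n$ the corestriction
\[
\cor_{m+1/m}\colon H^{1}(\QQ_{p,m+1},T_{\bar f}(j))\longrightarrow H^{1}(\QQ_{p,m},T_{\bar f}(j))
\]
is surjective; once this is known, projecting through $\cor_{m/n}\circ\cdots\circ\cor_{m+1/m}$ produces the required compatible lift of any class in $H^{1}(\Qpn,T_{\bar f}(j))$.

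For the surjectivity of $\cor_{m+1/m}$, the plan is to invoke local Tate duality: under the perfect pairing
\[
H^{1}(\QQ_{p,m},T_{\bar f}(j))\times H^{1}(\QQ_{p,m},A)\longrightarrow\Qp/\Zp,
\qquad A:=T_{\bar f}(j)^{*}(1)\otimes\Qp/\Zp,
\]
corestriction is transpose to the restriction map on $H^{1}(-,A)$. Hence $\cor_{m+1/m}$ is surjective if and only if the restriction
\[
\mathrm{res}\colon H^{1}(\QQ_{p,m},A)\longrightarrow H^{1}(\QQ_{p,m+1},A)
\]
is injective, and by inflation--restriction the kernel of this restriction is $H^{1}(\Gal(\QQ_{p,m+1}/\QQ_{p,m}),A^{G_{\QQ_{p,m+1}}})$, which vanishes as soon as $A^{G_{\QQ_{p,m+1}}}=0$.

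Finally, using $V_{\bar f}^{*}\cong V_{f}(k-1)$ to unravel the Tate dual, one checks that $A$ is isomorphic (after extension of scalars) to $(V_f/T_f)(k-j)$, and Lemma~\ref{inv} delivers exactly the vanishing $(V_f/T_f)(k-j)^{G_{\QQ_{p,m+1}}}=0$ required, uniformly in $j$ and $m$. The only step that demands any care is matching the Tate twists correctly when identifying the Pontryagin dual of $T_{\bar f}(j)$; with that bookkeeping done, the real content is already carried by Lemma~\ref{inv}, and the remainder is a routine combination of Tate duality, inflation--restriction, and Mittag--Leffler.
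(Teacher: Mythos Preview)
Your proposal is correct and follows essentially the same route as the paper: reduce to surjectivity of the corestriction maps, dualize via local Tate duality to injectivity of restriction on $H^1(-,V_f/T_f(\ast))$, and conclude by inflation--restriction together with Lemma~\ref{inv}. The only cosmetic differences are that you make the Mittag--Leffler step explicit and work with one-step transitions $\cor_{m+1/m}$, and your twist bookkeeping ($k-j$) actually matches the pairing \eqref{pair} in the paper (the ``$k-1-j$'' appearing in the paper's proof is an innocuous slip, since Lemma~\ref{inv} applies to every twist).
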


\begin{proof}
It is enough to show that $\cor_{n/m}:H^1(\Qpn,T_{\bar{f}}(j))\rightarrow H^1(\QQ_{p,m},T_{\bar{f}}(j))$ is surjective for all $n\ge m$. On taking Pontryagin dual, it is equivalent to showing that 
\[{\rm res}_{m/n}:H^1(\QQ_{p,m},V_f/T_{f}(k-1-j))\rightarrow H^1(\Qpn,V_f/T_{f}(k-1-j))\]
is injective. But this immediately follows from the inflation-restriction exact sequence and the fact that $V_f/T_f(k-1-j)^{G_{\Qpi}}=0$ as given by Lemma~\ref{inv}. 
\end{proof}

\begin{corollary}\label{sublattice}
For all $n$ and $j$ as above, $H^1(\Qpn,T_f(j))\hookrightarrow H^1(\Qpn,V_f(j))$.
\end{corollary}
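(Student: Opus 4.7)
The plan is to derive this from the short exact sequence of $G_{\Qpn}$-modules
\[
0 \longrightarrow T_f(j) \longrightarrow V_f(j) \longrightarrow (V_f/T_f)(j) \longrightarrow 0,
\]
which exists because Tate twisting commutes with passing to the quotient. Taking Galois cohomology over $\Qpn$ yields the long exact sequence
\[
\cdots \longrightarrow (V_f/T_f)(j)^{G_{\Qpn}} \longrightarrow H^1(\Qpn, T_f(j)) \longrightarrow H^1(\Qpn, V_f(j)) \longrightarrow \cdots,
\]
so the kernel of the map in question is exactly the image of the connecting map from $(V_f/T_f)(j)^{G_{\Qpn}}$.

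By Lemma~\ref{inv}, this fixed module is zero, so the kernel vanishes and we obtain the asserted injection. There is no real obstacle here: the statement is an immediate consequence of Lemma~\ref{inv}, and the only thing to check is that the hypotheses of that lemma apply for all $n$ and $j$ under consideration, which is precisely its content.
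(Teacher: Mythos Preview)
Your proof is correct and is essentially identical to the paper's own argument: both take the long exact cohomology sequence associated to $0\to T_f(j)\to V_f(j)\to (V_f/T_f)(j)\to 0$ and invoke Lemma~\ref{inv} to kill the $H^0$-term. The only difference is that you spell out slightly more detail about the connecting map, but the substance is the same.
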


\begin{proof}
From the short exact sequence $0\rightarrow T_f(j)\rightarrow V_f(j)\rightarrow V_f/T_f(j)\rightarrow0$, we obtain a long exact sequence
\[
\cdots\rightarrow(V_f/T_f(j))^{G_{\Qpn}}\rightarrow H^1(\Qpn,T_f(j))\rightarrow H^1(\Qpn,V_f(j))\rightarrow\cdots.
\]
Hence the result by Lemma~\ref{inv}.
\end{proof}

In particular, $H^1(\Qpn,T_f(j))$ can be identified as an $\OO_E$-lattice of $H^1(\Qpn,V_f(j))$. Another property of $H^1$ which we need is the injectivity of the restriction 
\[
H^1(\QQ_{p,m},V_f(j))\stackrel{\mathrm{res}}{\longrightarrow}H^1(\Qpn,V_f(j))
\]
 for $n\ge m$, which follows from the inflation-restriction sequence and that $V_f(j)^{G_{\QQ_{p,\infty}}}=0$ (immediate from Lemma~\ref{inv}). In particular, the same can be said about $H^1_f$. We regard $H^1_f(\QQ_{p,m},A)$ as a subgroup of $H^1_f(\QQ_{p,n},A)$ for $A=T_f(j)$ or $V_f(j)$ in the next section.


\subsection{Some subgroups of $H_f^1$}

Let $\eta^\pm$ be as defined in Section~\ref{ap0}. For $1\le j\le k-1$, recall that $\DD^0(V_f(j))=E\cdot\omega_j$. Using the shorthand introduced in Section~\ref{shorthand}, we define two $E[G_n]$-modules
\begin{equation}\label{defineevenodd}
\begin{split}
R_{n,j}^+&=\sum_{\sigma\in G_n}E\cdot\gamma_{n,j}(\eta^+_j)^\sigma\mod\omega\subset\Qpn\otimes \DD(V_f(j))/\DD^0(V_f(j)),\\
R_{n,j}^-&=\sum_{\sigma\in G_n}E\cdot\gamma_{n,j}(\eta^-_j)^\sigma\mod\omega\subset\Qpn\otimes \DD(V_f(j))/\DD^0(V_f(j)).
\end{split}
\end{equation}

\begin{remark}\label{cortr}
For $1\le j\le k-1$, we have isomorphisms of $E[G_{n}]$-modules
\[
H^1_f(\Qpn,V_f(j))\cong\Qpn\underset{\Qp}{\otimes} \DD(V_f(j))/\DD^0(V_f(j))\cong \Qpn\otimes E.
\]
Under this identification, the corestriction $\cor_{n/m}:H^1_f(\Qpn,V_f(j))\rightarrow H^1_f(\QQ_{p,m},V_f(j))$ corresponds to $\Tr_{n/m}\otimes\id:\Qpn\otimes E\rightarrow\QQ_{p,m}\otimes E$.
\end{remark}

By Remark~\ref{cortr}, we can identify $R_{n,j}^\pm$ with subsets of $\Qpn\otimes E$ and we have the following description.

\begin{lemma}
By identifying $\Qpn\otimes \DD(V(j))/\DD^0(V(j))$ with $\Qpn\otimes E$, we have
\begin{equation}\label{evenodd}
\begin{split}
R_{n,j}^+&=\sum_{m\text{ even}}\sum_{\sigma\in G_m}E\cdot\zeta_{p^m}^\sigma+E,\\
R_{n,j}^-&=\sum_{m\text{ odd}}\sum_{\sigma\in G_m}E\cdot\zeta_{p^m}^\sigma+E
\end{split}
\end{equation}
where $m\le n$ in the summands.
\end{lemma}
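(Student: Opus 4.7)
The plan is to reduce $\gamma_{n,j}(\eta^\pm_j)$ modulo $\omega$, express the result as an explicit element of $\Qpn\otimes E$, and then apply Corollary~\ref{gen} to describe the $E[G_n]$-span of its Galois orbit.

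Because $a_p=0$, the Frobenius satisfies $\vp^2=s$ on $\DD(V_f(j))$ with $s=-\epsilon(p)p^{k-1-2j}$; assumption~(1) ensures $s\neq 1$, so $(1-\vp)^{-1}=(1+\vp)/(1-s)$ is well-defined. Using the identities $\eta^+=\vp(\omega)/[\vp(\omega),\bar\omega]$ and $\eta^-=\omega/[\vp(\omega),\bar\omega]$ established in the proof of Proposition~\ref{pm}, I would compute $\vp^{-m}(\eta^\pm_j)\bmod\omega$ directly. The key observation, already used in that earlier proof, is that $\vp^{-m}(\eta^+_j)$ vanishes modulo $\omega$ precisely when $m$ is odd and is a nonzero scalar multiple of a fixed class $\bar e\in\DD(V_f(j))/E\omega_j$ when $m$ is even; the behaviour of $\vp^{-m}(\eta^-_j)$ is opposite. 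The constant contribution $(1-\vp)^{-1}(\eta^\pm_j)$ reduces modulo $\omega$ to $(1-s)^{-1}\bar e$ up to the same overall factor.

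Substituting these residues into the closed-form expression for $\gamma_{n,j}(\eta)$ from the unnumbered lemma of Section~\ref{prpair}, and using $\bar e$ to identify $\Qpn\otimes\DD(V_f(j))/\DD^0(V_f(j))$ with $\Qpn\otimes E$, I will present $\gamma_{n,j}(\eta^\pm_j)\bmod\omega$ in the form
\[
a^\pm_0+\sum_{m\in S^\pm,\ 1\le m\le n}a^\pm_m\,\zeta_{p^m},
\]
with $S^+$ (resp.\ $S^-$) the set of positive even (resp.\ odd) integers and every displayed coefficient explicitly nonzero (the coefficients are essentially powers of $s^{-1}$, and $a_0^\pm=(1-s)^{-1}$).

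Finally I apply Corollary~\ref{gen} to this element. In the $+$ case the hypothesis $a_1\ne(p-1)a_0$ is immediate because $a^+_1=0$ while $a^+_0\ne 0$. In the $-$ case a brief computation shows the hypothesis reduces to $s\ne 1/p$; the only possible failure requires $2j=k$ and $\epsilon(p)=-1$, and assumption~(2) is then invoked to exclude this degenerate configuration. Corollary~\ref{gen} identifies the $\Qp[G_n]$-span of the orbit with
\[
\Qp+\sum_{m\in S^\pm,\ 1\le m\le n}\ \sum_{\sigma\in G_n}\Qp\cdot\zeta_{p^m}^\sigma;
\]
extending scalars to $E$ and using that the $G_n$-orbit of $\zeta_{p^m}$ coincides with its $G_m$-orbit yields exactly the formulas of the lemma. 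The main obstacle I anticipate is a careful verification of the nondegeneracy hypothesis of Corollary~\ref{gen} in the $-$ case, where assumption~(2) becomes essential; everything else is explicit calculation building on the argument of Proposition~\ref{pm}.
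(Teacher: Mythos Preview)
Your approach is essentially the paper's own: reduce $\gamma_{n,j}(\eta^\pm_j)$ modulo $\omega$ using the parity behaviour of $\vp^{-m}(\eta^\pm_j)\bmod\omega$ established in Proposition~\ref{pm}, and then apply Corollary~\ref{gen}. The paper dispatches the hypothesis of Corollary~\ref{gen} in one line, asserting that $(p-1)(1-\vp)^{-1}(\eta^\pm_j)\not\equiv\vp^{-1}(\eta^\pm_j)\bmod\omega$ ``can be checked under assumption~(1)''; your reduction of this to the concrete condition $s\ne 1/p$, i.e.\ $-\epsilon(p)p^{k-2j}\ne 1$, is more explicit but in the same spirit.

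One caveat: your claim that assumption~(2) excludes the degenerate configuration $k=2j$, $\epsilon(p)=-1$ does not hold as stated. When $k$ is even, $k-1$ is odd, so $p+1\nmid k-1$ is automatic (as $p+1$ is even) and assumption~(2) imposes no further constraint on $\epsilon(p)$. Thus neither assumption~(1) nor assumption~(2) visibly rules out the single value $j=k/2$ with $\epsilon(p)=-1$. The paper's own attribution of this verification to assumption~(1) is equally opaque on this point, so this is a shared subtlety rather than a defect of your argument relative to the paper's; for the applications later in Section~\ref{kernel} one only needs a single value of $j$, so the issue is easily circumvented in practice.
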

\begin{proof}
Recall that $\gamma_{n,j}=p^{-n}\left(\sum_{i=0}^{n-1}\zeta_{p^{n-i}}\otimes\vp^{i-n}+(1-\vp)^{-1}\right)$
and $\eta^\pm$ are given by the following:
\[
\eta^+=\frac{\vp(\omega)}{[\vp(\omega),\bar{\omega}]}\qquad\text{and}\qquad\eta^-=\frac{\omega}{[\vp(\omega),\bar{\omega}]}.
\]
Hence, we can apply Corollary~\ref{gen} to $R_{n,j}^\pm$ provided that 
\[(p-1)(1-\vp)^{-1}(\eta^\pm_j)\not\equiv\vp^{-1}(\eta^\pm_j)\mod\omega,\] which can be checked under assumption (1). Recall that $\vp^m(\omega)\equiv0\mod\omega$ if and only if $m$ is an even integer (c.f. proof of Proposition~\ref{pm}), hence the result.
\end{proof}

In particular, \eqref{Qpndeco} and \eqref{evenodd} implies that
\[R_{n,j}^++R_{n,j}^-=\Qpn\otimes E\quad\text{and}\quad R_{n,j}^+\cap R_{n,j}^-=E\]
under the identification given by Remark~\ref{cortr}.
 Let
\[
\Qpn^\pm=\{x\in \Qpn:\Tr_{n/m+1}(x)\in \QQ_{p,m}\ \forall m\in S_n^\pm\}\]
where $S_n^\pm$ are defined by
\begin{eqnarray*}
S_n^+&=&\{m\in[0,n-1]:m\text{ even}\},\\
S_n^-&=&\{m\in[0,n-1]:m\text{ odd}\}.
\end{eqnarray*}
Then, $R_{n,j}^\pm$ can be identified with $\Qpn^\pm\otimes E$:
\begin{lemma}\label{coincides}
For $j$ and $n$ as above, $\Qpn^\pm\otimes E=R_{n,j}^\pm$.
\end{lemma}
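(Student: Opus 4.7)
The plan is to show that both $\Qpn^\pm\otimes E$ and $R_{n,j}^\pm$ equal the same explicit direct sum inside $\Qpn\otimes E$, namely the sum of the $\Qp^{(i)}\otimes E$ over a parity-restricted set of indices $i$. The key tool is the decomposition $\Qpn=\bigoplus_{i=0}^n\Qp^{(i)}$ from \eqref{Qpndeco}, which reduces the question to a comparison of subsums.

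For $R_{n,j}^\pm$, I would start from the explicit formula \eqref{evenodd} given in the previous lemma. Each inner sum $\sum_{\sigma\in G_m}E\cdot\zeta_{p^m}^\sigma$ is a $\Qp$-subspace of $\Qpn$, and its description follows from Corollary~\ref{gen}: for $m\ge2$ we have $\zeta_{p^m}=\pi_m$ with trace zero, so the span is exactly $\Qp^{(m)}\otimes E$, while for $m=1$ we use $\zeta_p=\pi_1-1/(p-1)$ to get $(\Qp^{(0)}\oplus\Qp^{(1)})\otimes E$. Combining with the extra summand $E$ in \eqref{evenodd}, this yields
\[
R_{n,j}^+=\bigoplus_{\substack{0\le i\le n\\ i\text{ even}}}\Qp^{(i)}\otimes E,\qquad R_{n,j}^-=\Qp^{(0)}\otimes E\ \oplus\bigoplus_{\substack{1\le i\le n\\ i\text{ odd}}}\Qp^{(i)}\otimes E,
\]
which is independent of $j$, matching the fact that the right-hand side $\Qpn^\pm\otimes E$ is also independent of $j$.

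For $\Qpn^\pm$, I would write an arbitrary element as $x=\sum_{i=0}^n x_i$ with $x_i\in\Qp^{(i)}$ and compute $\Tr_{n/m+1}(x)$ piece by piece. For $i\le m+1$, we have $x_i\in \QQ_{p,m+1}$, so $\Tr_{n/m+1}(x_i)=p^{n-m-1}x_i$. For $i>m+1$, iterating the factorisation $\Tr_{n/m+1}=\Tr_{i/m+1}\circ\Tr_{n/i}$ together with $\Tr_{j/j-1}(\Qp^{(j)})=0$ for $j\ge1$ (which is essentially the defining property of $\Qp^{(j)}$) gives $\Tr_{n/m+1}(x_i)=0$. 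Hence
\[
\Tr_{n/m+1}(x)=p^{n-m-1}\sum_{i=0}^{m+1}x_i,
\]
and since $\QQ_{p,m}=\bigoplus_{i=0}^m\Qp^{(i)}$ meets $\Qp^{(m+1)}$ only in $0$, the condition $\Tr_{n/m+1}(x)\in \QQ_{p,m}$ is equivalent to $x_{m+1}=0$. Running $m$ through $S_n^\pm$ then gives $x_j=0$ exactly for $j$ of the ``wrong'' parity, producing the same direct-sum descriptions of $\Qpn^\pm$ as for $R_{n,j}^\pm$.

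There is no real obstacle: the proof is essentially bookkeeping once one has the decomposition \eqref{Qpndeco} and Corollary~\ref{gen}. The only slightly delicate step is the case $m=1$ on the $R_{n,j}^-$ side, where the constant term of $\zeta_p$ is responsible for contributing the $\Qp^{(0)}$-summand that makes the two sides agree; one should check this is consistent with the extra $+E$ in \eqref{evenodd}.
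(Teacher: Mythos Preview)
Your argument is correct, but it proceeds differently from the paper's proof. You compute both $R_{n,j}^\pm$ and $\Qpn^\pm\otimes E$ explicitly as the same direct sum of the pieces $\Qp^{(i)}\otimes E$, using Proposition~\ref{spanning} for the first and a direct trace computation for the second. The paper instead only checks the easy inclusion $R_{n,j}^\pm\subset\Qpn^\pm\otimes E$ from \eqref{evenodd}, then combines the two facts $R_{n,j}^++R_{n,j}^-=\Qpn\otimes E$ (already noted before the lemma) and $\Qpn^+\cap\Qpn^-=\Qp$ with the dimension identity $\dim A+\dim B=\dim(A+B)+\dim(A\cap B)$ to force equality of dimensions, hence equality of the subspaces.

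Your approach has the advantage of giving the explicit decomposition of $\Qpn^\pm$ in terms of the $\Qp^{(i)}$, which the paper does not spell out; it also makes the independence of $j$ completely transparent. The paper's approach is shorter and avoids computing $\Qpn^\pm$ directly, relying instead on the symmetry between the $+$ and $-$ pieces. Both arguments ultimately rest on the decomposition \eqref{Qpndeco} and the vanishing of $\Tr_{i/i-1}$ on $\Qp^{(i)}$, so the underlying content is the same.
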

\begin{proof}By \eqref{evenodd}, it is easy to check that $R_{n,j}^\pm\subset \QQ_{p,n}^\pm\otimes E$, so $\dim_{E}R_{n,j}^\pm\le\dim_{E}\left(\Qpn^\pm\otimes E\right)$.
Since $R_{n,j}^++R_{n,j}^-=\Qpn\otimes E$, we have 
\[
\Qpn^+\otimes E+\Qpn^-\otimes E=R_{n,j}^++R_{n,j}^-=\Qpn\otimes E.
\]

If $x\in \Qpn^+\cap \Qpn^-$, then $\Tr_{n/m+1}(x)\in \QQ_{p,m}$ for all $m\le n-1$, hence $x\in\Qp$. Therefore, we have $\Qpn^+\cap \Qpn^-=\Qp$. Hence, by the formula $\dim A+\dim B=\dim(A+B)+\dim(A\cap B)$, we deduce that $\dim_{E}\left(\Qpn^\pm\otimes E\right)=\dim_{E}R_{n,j}^\pm$ and we are done.\end{proof}

Let $H_f^1(\Qpn,V_f(j))^\pm$ denote the image of $R_{n,j}^{\pm}$ under $\exp_{n,j}$, then Remark~\ref{cortr} and Lemma~\ref{coincides} implies that it is equal to
\[
\left\{x\in H^1_f(\Qpn,V_f(j)):\mathrm{cor}_{n/m+1}(x)\in H^1_f(\Qpn,V_f(j))\ \forall m\in S_n^\pm\right\}.
\]

By Corollary~\ref{sublattice}, if we define
\[H^1_f(\Qpn,T_f(j))^\pm=H_f^1(\Qpn,V_f(j))^\pm\cap H^1_f(\Qpn,T_f(j)),\]
then it is equal to
\[
\left\{x\in H^1_f(\Qpn,T_f(j)):\mathrm{cor}_{n/m+1}(x)\in H^1_f(\QQ_{p,m},T_f(j))\ \forall m\in S_n^\pm\right\}
\]
generalising the definition of $E^\pm$ in \cite{kobayashi03}.


\subsection{Description of the kernels}

Let $\mathbf{z}\in \HIw(T_{\bar{f}}(k-1))$. Under the notation of Section~\ref{ap0}, we have $\LL_{\eta^\pm}(\mathbf{z})=O(\log_p^{\frac{k-1}{2}})$, so  we have $\LL_{\eta^\pm}(\mathbf{z})=0$ if and only if $P_{n,r}(\eta^\pm,z_{-r,n})=0$
 for all $n\ge0$ and more than $(k-1)/2$ different values of $r$ with $0\le r\le k-2$. Recall that
\[
P_{n,r}(\cdot,z_{-r,n})=r!\sum_{\sigma\in G_n}\left[\exp_{n,r+1}(\gamma_{n,r+1}(\cdot)^\sigma),z_{-r,n}\right]_n\sigma.
\]
Hence, $\ker P_{n,r}(\eta^{\pm},\cdot)$ is just the annihilator of $\left\{\exp_{n,r+1}(\gamma_{n,r+1}(\eta^\pm)^\sigma):\sigma\in G_n\right\}$ under the pairing
\[
H^1(\Qpn,V_f(r+1))\times H^1(\Qpn,T_{\bar{f}}(k-1-r))\rightarrow E
\]
which coincides with the annihilator of $H^1_f(\Qpn,T_f(r+1))^\pm$ under the pairing
\begin{equation}\label{pairing}
H^1(\Qpn,T_f(r+1))\times H^1(\Qpn,T_{\bar{f}}(k-1-r))\rightarrow \OO_E.
\end{equation}
We denote this annihilator by $H^1_\pm(\Qpn,T_{\bar{f}}(k-1-r))$.

Define $\Hpm(T_{\bar{f}}(k-1-r))=\displaystyle\lim_{\leftarrow}H^1_\pm(\Qpn,T_{\bar{f}}(k-1-r))$. As $\log_{p,k}^\pm\ne0$ and $\LL_{\eta^\pm}=\log_{p,k}^\pm\col^\pm$, Corollary~\ref{proj} implies that
\[
\ker\LL_{\eta^\pm}=\ker\left(\col^\pm\right)=\bigcap_{r=0}^{k-2}\Tw_r\left(\Hpm\left(T_{\bar{f}}(k-1-r)\right)\right).
\]
In fact, by the proposition below, it suffices to take just one term in the intersection.

\begin{proposition}$\Tw_r\left(\Hpm\left(T_{\bar{f}}(k-1-r)\right)\right)=\Hpm(T_{\bar{f}}(k-1))$ for all integers $r$ such that $0\le r\le k-2$.
\end{proposition}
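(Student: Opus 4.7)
The plan is to show that for each individual $r\in[0,k-2]$, the subgroup $\Tw_r(\Hpm(T_{\bar{f}}(k-1-r)))\subseteq\HIw(T_{\bar{f}}(k-1))$ already coincides with $\ker\LL_{\eta^\pm}$. Since this is a condition independent of $r$, setting $r=0$ (where $\Tw_0=\id$) identifies the common subgroup with $\Hpm(T_{\bar{f}}(k-1))$, which yields the desired equality.

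First I would unpack definitions. By the definition of $\Hpm$, one has $\mathbf{z}\in\Tw_r(\Hpm(T_{\bar{f}}(k-1-r)))$ iff, for every $n\ge 1$, the image of $\Tw_{-r}\mathbf{z}$ in $H^1(\Qpn,T_{\bar{f}}(k-1-r))$ lies in the annihilator $H^1_\pm(\Qpn,T_{\bar{f}}(k-1-r))=\ker P_{n,r}(\eta^\pm,\cdot)$ of $H^1_f(\Qpn,T_f(r+1))^\pm$. Since this projection differs from $z_{-r,n}$ only by the sign $(-1)^r$, the annihilation condition is equivalent to $P_{n,r}(\eta^\pm,z_{-r,n})=0$ in $E[G_n]$ for every $n\ge 1$.

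Next I would invoke the congruence \eqref{con3} applied with $V=V_f(1)$, $h=1$, $j=0$, which reads
\[
\frac{1}{r!}\Tw_r(\LL_{\eta^\pm}(\mathbf{z}))\equiv P_{n,r}(\eta^\pm,z_{-r,n})\mod(\gamma^{p^{n-1}}-1).
\]
Because $r!$ is a unit in $E$, the vanishing of all $P_{n,r}(\eta^\pm,z_{-r,n})$ is equivalent to $\Tw_r(\LL_{\eta^\pm}(\mathbf{z}))\in\bigcap_{n\ge 1}(\gamma^{p^{n-1}}-1)\HH_\infty(G_\infty)$. This intersection is trivial --- any element of $\HH_\infty(G_\infty)$ is an analytic function of $\gamma-1$ on the open unit disk, and its zero set cannot contain the characters killed by $\gamma^{p^{n-1}}-1$ for all $n$ (which accumulate at $1$) unless it is identically zero --- so the condition forces $\Tw_r(\LL_{\eta^\pm}(\mathbf{z}))=0$. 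Finally, $\Tw_r$ is a ring automorphism of $\HH_\infty(G_\infty)$ (it is the character twist by $\chi^r$, with inverse $\Tw_{-r}$), so this is equivalent to $\LL_{\eta^\pm}(\mathbf{z})=0$, visibly independent of $r$.

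The conceptual heart of the argument, and the step most worth highlighting, is this last reduction: although the identification of $\ker\LL_{\eta^\pm}$ preceding the proposition is phrased as an intersection over $r\in[0,k-2]$, the bijectivity of $\Tw_r$ on $\HH_\infty(G_\infty)$ means that any single $r$ already detects the full kernel, which is precisely what makes the proposition true. The only technical ingredient not proved explicitly in the excerpt is the vanishing of $\bigcap_{n\ge 1}(\gamma^{p^{n-1}}-1)\HH_\infty(G_\infty)$, which is a standard property of the rings of $p$-adic power series with bounded logarithmic growth used throughout \cite{perrinriou94}.
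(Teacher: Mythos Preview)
Your argument has a genuine gap at the step you flag as ``the only technical ingredient not proved.'' The intersection $\bigcap_{n\ge 1}(\gamma^{p^{n-1}}-1)\HH_\infty(G_\infty)$ is \emph{not} zero: the $p$-adic logarithm $\log_p(\gamma)\in\HH_1(G_\infty)$ lies in it, via the factorisation
\[
\log_p(\gamma)=\frac{\gamma^{p^{n-1}}-1}{p^{n-1}}\cdot\prod_{m\ge n}\frac{\Phi_m(\gamma)}{p},
\]
with the tail product convergent in $\HH_\infty(G_\infty)$. Your justification (``zeros accumulating at the boundary force vanishing'') imports intuition from complex analysis that fails in the $p$-adic setting: elements of $\HH_r$ with $r\ge 1$ can and do have infinitely many zeros in the open unit disk. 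This is precisely why the discussion immediately preceding the proposition requires \emph{more than $(k-1)/2$} values of $r$ to force $\LL_{\eta^\pm}(\mathbf{z})=0$, and why $\ker\LL_{\eta^\pm}$ is first written as an intersection over all $r\in[0,k-2]$ rather than a single term.

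The paper's proof avoids this by passing from $\LL_{\eta^\pm}(\mathbf{z})$, which is only $O(\log_p^{(k-1)/2})$, to $\col^\pm(\mathbf{z})=\LL_{\eta^\pm}(\mathbf{z})/\log_{p,k}^\pm$, which is $O(1)$ and hence lies in $\Lambda_E(G_\infty)$. The vanishing of $P_{n,r}(\eta^\pm,z_{-r,n})$ for all $n$ still gives $\chi^r\theta(\LL_{\eta^\pm}(\mathbf{z}))=0$ for every finite-order $\theta$; restricting to those $\theta$ of conductor with the parity at which $\log_{p,k}^\pm$ does \emph{not} vanish yields $\chi^r\theta(\col^\pm(\mathbf{z}))=0$ for infinitely many characters. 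Now Weierstrass preparation in the Iwasawa algebra (this is \cite[Lemma~3.2]{pollack03}, cited in the proof) forces $\col^\pm(\mathbf{z})=0$. In other words, the factorisation $\LL_{\eta^\pm}=\log_{p,k}^\pm\cdot\col^\pm$ is doing real work: it isolates the inevitable infinite family of zeros into the $\log_{p,k}^\pm$ factor, leaving a bounded object on which a single value of $r$ suffices. Your argument, working directly with $\LL_{\eta^\pm}$, cannot succeed without invoking this decomposition.
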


\begin{proof}Since $\col^\pm(\mathbf{z})=O(1)$ for all $\mathbf{z}\in\HIw(T_{\bar{f}}(k-1))$, it is uniquely determined by its values at an infinite number of characters (see e.g. \cite[Lemma~3.2]{pollack03}). Hence, if there exists a fixed $r$ such that $P_{n,r}(\eta^\pm,z_{n,-r})=0$ for all $n$, then $\col^\pm(\mathbf{z})=0$. Therefore, we have
\[\ker(\col^\pm)=\Tw_r\left(\Hpm\left(T_{\bar{f}}(k-1-r)\right)\right)\]
and we are done.\end{proof}

\begin{corollary}
We have $\ker\LL_{\eta^\pm}=\ker\left(\col^\pm\right)=\Tw_r\left(\Hpm\left(T_{\bar{f}}(k-1-r)\right)\right)$ for any integer $0\le r\le k-2$.
\end{corollary}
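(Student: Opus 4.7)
The corollary is essentially a formal consequence of the two preceding results, so my plan is very short. First, I would recall the identity
\[
\ker\LL_{\eta^\pm}=\ker(\col^\pm)=\bigcap_{r=0}^{k-2}\Tw_r\bigl(\Hpm(T_{\bar{f}}(k-1-r))\bigr),
\]
established just before the proposition using $\LL_{\eta^\pm}=\log_{p,k}^\pm\cdot\col^\pm$, the non-vanishing of $\log_{p,k}^\pm$, the surjectivity of the projection $\HIw\to H^1(\Qpn,\cdot)$ from Corollary~\ref{proj}, and the explicit formula for $P_{n,r}$.

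Next, I would invoke the proposition, which asserts that for every $r$ in the range $0\le r\le k-2$, the twisted module $\Tw_r(\Hpm(T_{\bar{f}}(k-1-r)))$ coincides with $\Hpm(T_{\bar{f}}(k-1))$. This means all the terms in the above intersection are in fact the same $\Lambda_{\OO_E}(G_\infty)$-submodule of $\HIw(T_{\bar{f}}(k-1))$, so the intersection collapses and equals any single term.

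Putting these together yields
\[
\ker\LL_{\eta^\pm}=\ker(\col^\pm)=\Tw_r\bigl(\Hpm(T_{\bar{f}}(k-1-r))\bigr)
\]
for every integer $r$ with $0\le r\le k-2$, which is the claim. There is no real obstacle here: the substantive content was already absorbed into the proposition (which in turn used the fact that $\col^\pm(\mathbf{z})=O(1)$ and is therefore determined by infinitely many character values, together with functoriality of $\Tw_r$); the corollary is merely the reformulation which will be convenient for later applications in Section~\ref{selmer}.
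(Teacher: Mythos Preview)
Your proposal is correct and matches the paper's approach: the corollary is immediate from the intersection formula established just before the proposition together with the proposition itself, and the paper accordingly states it without proof. If anything, one could shorten your argument further by noting that the proposition's proof already directly shows $\ker(\col^\pm)=\Tw_r(\Hpm(T_{\bar f}(k-1-r)))$ for each $r$, so invoking the intersection formula is not strictly necessary.
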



\subsection{Pontryagin duality}\label{PD}

We have seen that $\ker(\col^\pm)$ can be written in terms of $H^1_\pm$, about which we now say a little bit more. The Pontryagin duality gives a pairing:
\begin{equation}\label{pair}
H^1(\Qpn,V_f/T_f(r+1))\times H^1(\Qpn,T_{\bar{f}}(k-1-r))\rightarrow E/\OO_E.
\end{equation}
We can describe the annihilator of $H^1_\pm(\Qpn,T_{\bar{f}}(k-1-r))$ under this pairing explicitly:

\begin{lemma}\label{ann}
$H^1_f(\Qpn,T_f(r+1))^\pm{\otimes}E/\OO_E\hookrightarrow H^1(\Qpn,V_f/T_f(r+1))$ and it can be identified as the annihilator of $H^1_\pm(\Qpn,T_{\bar{f}}(k-1-r))$ under (\ref{pair}).
\end{lemma}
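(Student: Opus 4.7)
The plan is to establish the injection first, and then identify its image with the claimed annihilator. For the injection, I would start from the short exact sequence
\[
0\to T_f(r+1)\to V_f(r+1)\to V_f/T_f(r+1)\to 0
\]
and take $G_{\Qpn}$-cohomology. Lemma~\ref{inv} gives $(V_f/T_f)(r+1)^{G_{\Qpn}}=0$, so the long exact sequence yields an injection
\[
H^1(\Qpn,V_f(r+1))/H^1(\Qpn,T_f(r+1))\hookrightarrow H^1(\Qpn,V_f/T_f(r+1)).
\]
Since $H^1(\Qpn,T_f(r+1))$ is $\OO_E$-torsion-free by Corollary~\ref{sublattice}, hence $\OO_E$-flat, the left-hand side is canonically $H^1(\Qpn,T_f(r+1))\otimes E/\OO_E$. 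To restrict to the $\pm$-part, I would check that $H^1_f(\Qpn,T_f(r+1))^\pm$ is a saturated (pure) $\OO_E$-submodule of $H^1(\Qpn,T_f(r+1))$; this is immediate from its defining realization as the intersection of $H^1(\Qpn,T_f(r+1))$ with the $E$-subspace $H^1_f(\Qpn,V_f(r+1))^\pm$ of $H^1(\Qpn,V_f(r+1))$. Saturation then implies that tensoring with $E/\OO_E$ preserves the inclusion, yielding the desired embedding $H^1_f(\Qpn,T_f(r+1))^\pm\otimes E/\OO_E\hookrightarrow H^1(\Qpn,V_f/T_f(r+1))$.

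For the annihilator identification, the key input is the perfect $\OO_E$-valued local Tate pairing \eqref{pairing}. The relevant elementary fact for a perfect $\OO_E$-pairing $M\times N\to\OO_E$ of finite free modules says that for any saturated submodule $A\subseteq M$, the annihilator of $A^\perp\subseteq N$ inside $M\otimes E/\OO_E$, computed via the induced $E/\OO_E$-valued pairing, is precisely $A\otimes E/\OO_E$ (by choosing bases adapted to $A$). Applying this with $A=H^1_f(\Qpn,T_f(r+1))^\pm$, which is saturated by the previous paragraph, and observing that $A^\perp=H^1_\pm(\Qpn,T_{\bar{f}}(k-1-r))$ by the very definition of $H^1_\pm$, gives the claim, provided one can identify the Pontryagin pairing \eqref{pair} restricted to the image of $H^1(\Qpn,T_f(r+1))\otimes E/\OO_E$ with the pairing induced from \eqref{pairing}. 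This compatibility is the standard functoriality of the cup-product pairings in local Tate duality with respect to the change of coefficients $0\to\OO_E\to E\to E/\OO_E\to 0$.

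The main obstacle, I expect, is precisely verifying this compatibility of pairings: one has to check that the connecting map from the snake lemma applied to the above coefficient sequence intertwines the $\OO_E$-valued cup product on $T_f(r+1)$ with the $E/\OO_E$-valued Pontryagin pairing on $V_f/T_f(r+1)$, in a way that matches the embedding of part (a). This is essentially formal once set up correctly, but it is where sign and normalisation conventions have to be pinned down. Once in hand, everything else reduces to the saturation argument for $H^1_f(\Qpn,T_f(r+1))^\pm$ and the adapted-basis computation of annihilators, both of which are routine.
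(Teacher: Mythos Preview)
Your proposal is correct and follows essentially the same approach as the paper. Both arguments use Lemma~\ref{inv} to obtain the injection $H^1(\Qpn,T_f(r+1))\otimes E/\OO_E\hookrightarrow H^1(\Qpn,V_f/T_f(r+1))$ and then reduce to the fact that $H^1_f(\Qpn,T_f(r+1))^\pm$ is saturated in $H^1(\Qpn,T_f(r+1))$; the paper cites \cite[Lemma~8.17]{kobayashi03} for this, whereas you observe directly that saturation is automatic from the definition as an intersection with an $E$-subspace. For the annihilator identification, the paper dualises the defining exact sequence of $H^1_\pm$ and reads off that the annihilator is exactly the image of $H^1_f(\Qpn,T_f(r+1))^\pm\otimes E/\OO_E$, while you phrase the same computation as the adapted-basis fact for perfect $\OO_E$-pairings; these are equivalent, and your version has the mild advantage of being self-contained.
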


\begin{proof}By definitions, we have an exact sequence
\[
0\rightarrow H^1_\pm(\Qpn,T_{\bar{f}}(k-1-r))\rightarrow H^1(\Qpn,T_{\bar{f}}(k-1-r))\rightarrow\mathrm{Hom}(H^1_f(\Qpn,T_f(r+1))^\pm,\OO_E).
\]
Taking Pontryagin duals, we have
\[
H^1_f(\Qpn,T_f(r+1))^\pm{\otimes}E/\OO_E\rightarrow H^1(\Qpn,V_f/T_f(r+1))\rightarrow H^1_\pm(\Qpn,T_{\bar{f}}(k-1-r))^{\vee}\rightarrow0.
\]
Therefore, the second part of the lemma follows from the first. Recall that $(V_f/T_f(r+1))^{G_{\Qpn}}=0$ by Lemma~\ref{inv}, so we have
\[
H^1_f(\Qpn,T_f(r+1)){\otimes}E/\OO_E\hookrightarrow H^1_f(\Qpn,V_f/T_f(r+1))\subset H^1(\Qpn,V_f/T_f(r+1)).
\]
Hence, it suffices to show that we have inclusion 
\[
H^1_f(\Qpn,T_f(r+1))^\pm{\otimes}E/\OO_E\hookrightarrow H^1_f(\Qpn,T_f(r+1)){\otimes}E/\OO_E.
\]
 But this follows from \cite[Lemma~8.17]{kobayashi03}.
\end{proof}

We write $H^1_f(\Qpn,V_f/T_f(j))^\pm$ for $H^1_f(\Qpn,T_f(j))^\pm{\otimes}E/\OO_E$, which is identified as a subgroup of $H^1_f(\Qpn,V_f/T_f(j))$. Note that it corresponds to the definition of $E^\pm(\QQ_{p,n})\otimes\Qp/\ZZ_p$ given in \cite{kobayashi03} and this is used to define $\Sel_p^\pm$ in Section~\ref{selmer}.


\section{Images of the Coleman maps}
\label{images}

In this section, we describe the images of $\col^\pm$. By Corollary~\ref{proj}, any elements of $H^1(\QQ_{p,n},T_{\bar{f}}(k-1))$ can be lifted to a global element of $\HIw(T_{\bar{f}}(k-1))$. Hence, we can in fact think of $\LL_{\eta^\pm,n}$ and $\col_n^\pm$ as maps from $H^1(\Qpn,T_{\bar{f}}(k-1))$ to $E[G_n]$. This allows us to give a description of $\image(\col^\pm)$ by studying $\image(\col^\pm_n)$.

In \cite[Section 8]{kobayashi03}, the images of the plus and minus Coleman maps for elliptic curves over $\QQ$ are shown to be the following:
\begin{eqnarray*}
{\rm Im(Col^+)}&=&(\gamma-1)\Lambda_{\calO_E}(G_\infty)+\left(\sum_{\sigma\in\Delta}\sigma\right)\Lambda_{\calO_E}(G_\infty),\\
{\rm Im(Col^-)}&=&\Lambda_{\calO_E}(G_\infty).
\end{eqnarray*}
In particular, the $\Delta$-invariant part of $\image(\col^\pm)$ is the whole of $(\sum_{\sigma\in\Delta}\sigma)\Lambda_{\calO_E}(G_\infty)$ (which we identify with $\Lambda_{\calO_E}(\Gamma)$). For a general $f$, we unfortunately do not know whether the images of the Coleman maps are inside $\Lambda_{\calO_E}(G_\infty)$ or not. However, after multiplying by a power of $\varpi$, we show that the $\Delta$-invariant part of $\image(\col^\pm)$ agree with the above descriptions and the same can be said for the whole of $\image(\col^-)$.


\subsection{Divisibility by $\Phi_m(\gamma)$}\label{image}
We have seen that the image of $\LL_{\eta^\pm}$ is divisible by $\log_{p,k}^\pm$. We give a necessary and sufficient condition for such divisibility at the finite level below.

Recall that $G_\infty=\Gal(k_\infty/\QQ)\cong\Delta\times\Gamma$ where $\Delta$ is a finite group of order $p-1$, $\Gamma\cong\ZZ_p$ and $\gamma$ is a fixed topological generator of $\Gamma$. We have 
\[\OO_E[G_n]\cong\OO_E[\Delta][\gamma]/(\gamma^{p^{n-1}}-1)\quad\text{and}\quad\Phi_m(\gamma)=1+\gamma^{p^{m-1}}+\gamma^{2p^{m-1}}+\cdots+\gamma^{(p-1)p^{m-1}}.\]
 Therefore, if $m\ge n$, then $\Phi_m(\gamma)=p$ in $\OO_E[G_n]$, so we only consider $m<n$ here.

\begin{lemma}\label{lin}
Let $m<n$ and 
\[ 
f=\sum_{\underset{\sigma\in\Delta}{r\ {\rm{mod}}\ p^{n-1}}}c_{r,\sigma}\cdot\sigma\cdot \gamma^r\in\OO_E[G_n].
\]
 For each $\sigma\in\Delta$ and $r\mod p^m$, write 
\[
b_{r,\sigma}=c_{r,\sigma}+c_{r+p^{m},\sigma}+\cdots+c_{r-p^{m},\sigma}.
\]
 Then, $f$ is divisible by $\Phi_m(\gamma)$ in $\calO_E[G_n]$ if and only if $b_{r,\sigma}=b_{s,\sigma}$ whenever $r\equiv s\mod p^{m-1}$.
\end{lemma}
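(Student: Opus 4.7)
The plan is to convert the congruence into a polynomial divisibility in $\OO_E[X]$ and then verify it by an explicit expansion. First, observe that $\OO_E[G_n]\cong\bigoplus_{\sigma\in\Delta}\sigma\cdot\OO_E[\gamma]/(\gamma^{p^{n-1}}-1)$ as $\OO_E[\Gamma]$-modules, and $\Phi_m(\gamma)$ acts through the $\Gamma$-factor only. So I may fix a single $\sigma\in\Delta$, drop the subscript, and reduce to the assertion: for $f(\gamma)=\sum_{r=0}^{p^{n-1}-1}c_r\gamma^r$, divisibility of $f$ by $\Phi_m(\gamma)$ in $R:=\OO_E[X]/(X^{p^{n-1}}-1)$ is equivalent to the coset condition on the $b_r$.

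Next, since $m<n$, one has $p^m\mid p^{n-1}$, hence $\Phi_m(X)\mid X^{p^m}-1\mid X^{p^{n-1}}-1$ in $\OO_E[X]$. Because $\Phi_m$ is monic, this shows that divisibility of $f$ by $\Phi_m(\gamma)$ in $R$ is equivalent to $\Phi_m(X)\mid f(X)$ in $\OO_E[X]$, and thus to $f(\zeta_{p^m})=0$. Using $\zeta_{p^m}^{p^m}=1$ and grouping terms by residue mod $p^m$, one gets
\[
f(\zeta_{p^m})=\sum_{r=0}^{p^m-1}b_r\,\zeta_{p^m}^{\,r}=\tilde f(\zeta_{p^m}),\qquad\tilde f(X):=\sum_{r=0}^{p^m-1}b_r X^r.
\]
So the question reduces to showing $\Phi_m(X)\mid\tilde f(X)$ in $\OO_E[X]$ if and only if $b_r$ depends only on $r\bmod p^{m-1}$.

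This last equivalence is the only real content. The key is the bijection $(s,j)\leftrightarrow s+jp^{m-1}$ between $\{0,\ldots,p^{m-1}-1\}\times\{0,\ldots,p-1\}$ and $\{0,\ldots,p^m-1\}$, together with the identity $\Phi_m(X)=\sum_{j=0}^{p-1}X^{jp^{m-1}}$. For the \emph{if} direction, writing $b_{s+jp^{m-1}}=d_s$ one obtains
\[
\tilde f(X)=\sum_{s=0}^{p^{m-1}-1}d_s X^s\sum_{j=0}^{p-1}X^{jp^{m-1}}=\Phi_m(X)\cdot\sum_{s=0}^{p^{m-1}-1}d_s X^s.
\]
For the \emph{only if} direction, since $\deg\tilde f\le p^m-1$ and $\deg\Phi_m=(p-1)p^{m-1}$, any factorisation $\tilde f=\Phi_m\cdot h$ in $\OO_E[X]$ (which exists by monicness of $\Phi_m$ as soon as $\Phi_m\mid\tilde f$) has $\deg h\le p^{m-1}-1$; comparing the coefficient of $X^{s+jp^{m-1}}$ on both sides via the bijection above forces $b_{s+jp^{m-1}}$ to equal the coefficient of $X^s$ in $h$, independently of $j$. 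The main obstacle is not conceptual but bookkeeping: one must verify that the bijection matches up coefficients correctly and that the degree estimates allow the polynomial division to be carried out integrally over $\OO_E$.
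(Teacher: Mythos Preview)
Your proof is correct and follows essentially the same approach as the paper: both reduce to a single $\sigma$, both pass through the evaluation $f_\sigma(\zeta_{p^m})=\sum_{r\bmod p^m} b_r\,\zeta_{p^m}^r$, and both translate divisibility into the coset condition via the identity $\Phi_m(X)=\sum_{j=0}^{p-1}X^{jp^{m-1}}$. Your organisation through the auxiliary polynomial $\tilde f$ with degree bounds is slightly tidier than the paper's, which treats the two directions asymmetrically (direct coefficient expansion of $f=g\Phi_m$ in one direction, evaluation at $\zeta_{p^m}$ in the other).

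One small point worth tightening: the step ``$\Phi_m(X)\mid f(X)$ in $\OO_E[X]$, and thus $f(\zeta_{p^m})=0$'' is only an equivalence if $\Phi_m$ is irreducible over $E$, which is not guaranteed for an arbitrary finite extension $E/\Qp$. (The paper's own converse direction makes the same tacit assumption.) You can sidestep this entirely: grouping by residues mod $p^m$ already gives $f(X)\equiv\tilde f(X)\pmod{X^{p^m}-1}$, hence $\pmod{\Phi_m(X)}$, so $\Phi_m\mid f\Longleftrightarrow\Phi_m\mid\tilde f$ holds directly in $\OO_E[X]$ without ever evaluating at a root. With that adjustment the detour through $\zeta_{p^m}$ becomes unnecessary and the argument is watertight over any $\OO_E$.
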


\begin{proof}
Let $f=g\Phi_m(\gamma)$ and $g=\sum a_{r,\sigma}\cdot\sigma\cdot \gamma^r\in\OO_E[G_n]$. Then the coefficient of $\sigma \gamma^r$ in $f$ is
\[a_{r,\sigma}+a_{r-p^{m-1},\sigma}+\cdots+a_{r-(p-1)p^{m-1},\sigma}.\]
 Hence, $b_{r,\sigma}$ as defined in the statement of the lemma is just the sum of the coefficients $a_{s,\sigma}$ of $g$ with $s\equiv r\mod p^{m-1}$. Hence $b_{r,\sigma}=b_{s,\sigma}$ whenever $r\equiv s\mod p^{m-1}$.

Conversely, let $\sum c_{r,\sigma}\cdot\sigma\cdot \gamma^r\in\OO_E[G_n]$ and define $b_{r,\sigma}$ as in the statement of the lemma. Assume that $b_{r,\sigma}=b_{s,\sigma}$ for all $r\equiv s\mod p^{m-1}$. Let $f_\sigma(\gamma)=\sum_r c_{r,\sigma}\cdot \gamma^r$, so $f=\sum f_\sigma\cdot\sigma$. We have
\begin{eqnarray*}
f_\sigma(\zeta_{p^m})&=&\sum_{r\ \textrm{mod}\ p^m}\left(\sum_{s\equiv r(p^m)}c_{s,\sigma}\right)\zeta_{p^m}^r\\
&=&\sum_{r\ \textrm{mod}\ p^m}b_{r,\sigma}\zeta_{p^m}^r\\
&=&\sum_{s\ \textrm{mod}\ p^{m-1}}b_{s,\sigma}\sum_{r\equiv s(p^{m-1})}\zeta_{p^m}^r\\
&=&0.
\end{eqnarray*}
Hence, $\Phi_m(\gamma)$ divides $f$ and we are done.
\end{proof}

Applying this to the image of $\LL_{\eta^\pm,n}$, we have:
\begin{corollary}\label{lin2}
For any $z\in H^1(\Qpn,T_{\bar{f}}(k-1))$, $\LL_{\eta^\pm,n}(z)$ is divisible by $\Phi_m(\gamma)$ in $E[G_n]$ if $m\in S_n^\pm$.
\end{corollary}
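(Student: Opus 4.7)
The plan is, via Lemma~\ref{lin}, to reduce the claimed divisibility at level $n$ to divisibility at level $m+1$, and then to verify the latter by evaluating at each character of $G_{m+1}$ not factoring through $G_m$, using the $r=0$ case of Lemma~\ref{char} together with the orthogonality argument used in the proof of Lemma~\ref{modomega}. The parity input is exactly what was already extracted in the proof of Proposition~\ref{pm}.

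First I note that, by Lemma~\ref{lin}, $\Phi_m(\gamma)$ divides $\LL_{\eta^\pm,n}(z)$ in $E[G_n]$ if and only if the stated condition on the numbers $b_{r,\sigma}$ holds; but those $b_{r,\sigma}$ are precisely the coefficients of the image of $\LL_{\eta^\pm,n}(z)$ under the natural projection $E[G_n]\to E[G_{m+1}]$. Lifting $z$ to a global class $\mathbf z\in\HIw(T_{\bar f}(k-1))$ by Corollary~\ref{proj} and invoking the compatibility of $\LL_{\eta^\pm}$ at different finite levels (both sides are reductions of $\LL_{\eta^\pm}(\mathbf z)$), this projection coincides with $\LL_{\eta^\pm,m+1}(z')$, where $z'=\cor_{n/m+1}(z)$. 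It therefore suffices to show that $\psi(\LL_{\eta^\pm,m+1}(z'))=0$ for every character $\psi$ of $G_{m+1}$ whose restriction to $\Gamma$ has exact order $p^m$, equivalently, every $\psi$ that does not factor through $G_m$.

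For such $\psi$, I apply the second formula of Lemma~\ref{char} with parameters ``$n$''$=m+1$, $r=0$, and $\theta=\psi$. The key point is to take $r=0$: then $\psi$ itself is of finite order and satisfies the hypothesis of Lemma~\ref{char} directly, whereas any decomposition $\psi=\chi^r\theta'$ with $r>0$ would force $\theta'$ out of the class of finite-order characters. The formula yields
\[
\psi(\LL_{\eta^\pm,m+1}(z'))=\frac{1}{\tau(\psi^{-1})}\sum_{\sigma\in G_{m+1}}\psi^{-1}(\sigma)\bigl[\vp^{-(m+1)}(\eta^\pm_1),\exp^*_{m+1,0}({z'}^{\sigma})\bigr]_{m+1}.
\]
The computation in the proof of Proposition~\ref{pm} shows that $\vp^{-(m+1)}(\eta^+_1)\equiv0\pmod{\omega}$ exactly when $m+1$ is odd (i.e.\ when $m\in S_n^+$), and symmetrically for $\eta^-$. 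In either of these cases $\vp^{-(m+1)}(\eta^\pm_1)$ is a scalar multiple of $\omega_1\in\DD^0(V_f(1))$, while $\exp^*_{m+1,0}({z'}^{\sigma})$ lies in $\QQ_{p,m+1}\otimes\DD^0(V_{\bar f}(k-1))$. Since $\DD^0(V_f(1))$ and $\DD^0(V_{\bar f}(k-1))$ are orthogonal complements under the natural pairing, as used in the proof of Lemma~\ref{modomega}, every term in the sum vanishes, so $\psi(\LL_{\eta^\pm,m+1}(z'))=0$. The main obstacle, such as it is, is precisely this parametrisation point: recognising that taking $r=0$ in Lemma~\ref{char} handles all characters of $G_{m+1}$ uniformly (independently of their $\Delta$-component) via the orthogonality of the $\DD^0$'s, so that no separate argument is needed for the nontrivial characters of $\Delta$.
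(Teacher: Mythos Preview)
Your argument is correct, and it takes a genuinely different route from the paper's own proof. The paper works entirely at level $n$: it uses the description \eqref{defn} of $\LL_{\eta^\pm,n}$ to write the coefficients as $\theta(\exp_{n,1}(\gamma_{n,1}(\eta^\pm_1)^\tau))$, then verifies the criterion of Lemma~\ref{lin} directly by computing the partial sums
\[
\eta_{r,\sigma}=\sum_{s\equiv r\ (p^m)}\gamma_{n,1}(\eta_1)^{\sigma\gamma^s}
=p^{-m-1}\Bigl((1-\vp)^{-1}(\eta_1)+\zeta_p\otimes\vp^{-1}(\eta_1)+\cdots+\zeta_{p^{m+1}}\otimes\vp^{-m-1}(\eta_1)\Bigr)^{\sigma\gamma^r}
\]
from the explicit formula for $\gamma_{n,1}$, and observing that this depends only on $r\bmod p^{m-1}$ once $\vp^{-m-1}(\eta_1)\equiv0\pmod\omega$. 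By contrast, you descend to level $m+1$ via the projection $E[G_n]\to E[G_{m+1}]$ and then invoke the character-value formula of Lemma~\ref{char} with $r=0$ together with the orthogonality of the filtration steps; this is essentially the Proposition~\ref{pm} argument transplanted to the finite level. What the paper's approach buys is that it needs no lift to $\HIw$ (hence no appeal to Corollary~\ref{proj}), and the same partial-sum computation immediately yields Lemma~\ref{lin3} as well; what your approach buys is that it reuses machinery already in place rather than a fresh coefficient calculation.

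One small notational slip: in your displayed formula the dual exponential should be $\exp^*_{m+1,1}$ rather than $\exp^*_{m+1,0}$, matching \eqref{char2} with $r=0$ (the image then lies in $\QQ_{p,m+1}\otimes\DD^0(V_{\bar f}(k-1))$, as you correctly use in the orthogonality step).
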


\begin{proof}
The image of $\LL_{\eta^\pm,n}(z)$ is given by the following composition
\[
H^1(\Qpn,T_{\bar{f}}(k-1))\stackrel{\sim}{\longrightarrow}\textrm{Hom}_{\OO_E}\left(H^1(\Qpn,T_f(1)),\OO_E\right)\rightarrow E[G_n]
\]
where the first isomorphism is induced by the pairing (\ref{pairing}) and the second map is given by
\begin{equation}\label{defn}
\begin{split}
\textrm{Hom}_{\OO_E}\left(H^1(\Qpn,T_f(1)),\OO_E\right)&\rightarrow E[G_n]\\
\theta&\mapsto\sum_{\tau\in G_n}\theta(\exp_{n,1}(\gamma_{n,1}(\eta_1^{\pm})^\tau)\tau,
\end{split}
\end{equation}
with $\theta$ extended to an element of $\textrm{Hom}_{E}(H^1(\Qpn,V_f(1)),E)$ in the natural way. Hence, it is enough to show that the coefficients $\theta(\exp_{n,1}(\gamma_{n,1}(\eta_1^{\pm})^\tau)$, as $\tau\in G_n$ varies, satisfy the relations described in Lemma~\ref{lin}. Recall that $\exp_{n,1}$ gives an isomorphism 
\[
\Qpn\otimes\DD(V_f(1))/E\cdot\omega_1\rightarrow H^1_f(\Qpn,V_f(1)).
\]
Therefore, it is in fact enough to show that $\gamma_{n,1}(\eta^{\pm}_1)^\tau\mod\omega$ satisfy the relations in Lemma~\ref{lin}. Let $\sigma\in\Delta$ and $r\in\ZZ/p^m\ZZ$. For $\eta=\eta^\pm$, we write
\begin{align*}
\eta_{r,\sigma}&=\sum_{s\equiv r(p^m)}\gamma_{n,1}(\eta_1)^{\sigma \gamma^s}\\
&=p^{-m-1}\left((1-\vp)^{-1}(\eta_1)+\zeta_p\otimes\vp^{-1}(\eta_1)+\cdots+\zeta_{p^{m+1}}\otimes\vp^{-m-1}(\eta_1)\right)^{\sigma \gamma^r}.
\end{align*} 
Therefore, if $\vp^{-m-1}(\eta_1)\equiv0\mod\omega$, then $\eta_{r,\sigma}=\eta_{s,\sigma}$ for $r\equiv s\mod p^{m-1}$, as $(\zeta_{p^m})^{\sigma \gamma^r}=(\zeta_{p^m})^{\sigma \gamma^s}$. Hence, by the definitions of $\eta^\pm$ as given in the proof of Proposition~\ref{pm}, we are done.
\end{proof}

By considering its image modulo $(u^{-j}\gamma)^{p^{n-1}}-1$ similarly, one can deduce Proposition~\ref{pm}. We can in fact say a bit more about the image of $\LL_{\eta^+,n}$.
\begin{lemma}\label{lin3}
If $\LL_{\eta^+,n}(z)=\sum c_{r,\sigma}\cdot\sigma\cdot \gamma^r$, then $\sum_r c_{r,\sigma}$ is independent of $\sigma$.
\end{lemma}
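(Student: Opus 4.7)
The plan is to start from the explicit expression for $\LL_{\eta^+,n}(z)$ derived in the proof of Corollary~\ref{lin2}. That proof identifies the coefficient of $\tau \in G_n$ in $\LL_{\eta^+,n}(z)$ with $\theta\bigl(\exp_{n,1}(\gamma_{n,1}(\eta^+_1)^\tau)\bigr)$, where $\theta$ is the $\OO_E$-linear functional on $H^1(\Qpn, T_f(1))$ induced by $z$ under the pairing \eqref{pairing}. Writing $\tau = \sigma\gamma^r$ yields $c_{r,\sigma} = \theta\bigl(\exp_{n,1}(\gamma_{n,1}(\eta^+_1)^{\sigma\gamma^r})\bigr)$.

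Next I would interchange the $r$-summation with $\theta \circ \exp_{n,1}$, both of which are $E$-linear, and pull $\sigma$ outside using the fact that $\Delta$ and $\Gamma$ commute in $G_n$. Recognising that $\sum_{r \bmod p^{n-1}} \gamma^r$ acts on $\Qpn \otimes \DD(V_f(1))$ as $\Tr_{\Qpn/\QQ_{p,1}} \otimes \id$, this gives
\begin{equation*}
\sum_r c_{r,\sigma} \;=\; \theta\Bigl(\exp_{n,1}\bigl(\sigma \cdot (\Tr_{\Qpn/\QQ_{p,1}}\otimes\id)(\gamma_{n,1}(\eta^+_1))\bigr)\Bigr).
\end{equation*}

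Using the explicit formula for $\gamma_{n,1}$ together with the standard trace values $\Tr_{\Qpn/\QQ_{p,1}}(\zeta_{p^m}) = 0$ for $m \geq 2$, $\Tr_{\Qpn/\QQ_{p,1}}(\zeta_p) = p^{n-1}\zeta_p$ and $\Tr_{\Qpn/\QQ_{p,1}}(1) = p^{n-1}$, only two summands survive, leaving $p^{-1}\bigl(\zeta_p\otimes\vp^{-1}(\eta^+_1) + (1-\vp)^{-1}(\eta^+_1)\bigr)$. A mod-$\omega$ calculation using $\eta^+ = \vp(\omega)/[\vp(\omega),\bar{\omega}]$, the relation $\vp^2 = -\epsilon(p)p^{k-1}$ on $\DD(V_f)$, and the twist convention $\vp(x_j) = p^{-j}\vp(x)_j$, shows that $\vp^{-1}(\eta^+_1) \equiv 0 \bmod \omega$ — this is the same parity phenomenon driving Proposition~\ref{pm} and Corollary~\ref{lin2} — whereas $(1-\vp)^{-1}(\eta^+_1) = (1+\vp)(\eta^+_1)/(1+\epsilon(p)p^{k-3})$ reduces to a $\Qp$-multiple of $\eta^+_1$. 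Hence the trace reduces mod $\omega$ to $c\,\eta^+_1$ for some scalar $c \in \Qp$.

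Since $c \in \Qp$, the representative $c\,\eta^+_1$ in $\Qpn \otimes \DD(V_f(1))/\DD^0(V_f(1))$ actually lies in the subspace $\Qp \otimes \DD(V_f(1))/\DD^0(V_f(1))$, on which $\Delta \subset G_n$ acts trivially. Therefore $\sigma(c\,\eta^+_1) = c\,\eta^+_1$ for every $\sigma$, and applying $\theta \circ \exp_{n,1}$ yields a value independent of $\sigma$, proving the claim. The main obstacle is the mod-$\omega$ computation: one must carefully separate the $\vp$-action on $\DD(V_f)$ from its twist on $\DD(V_f(1))$ (differing by $p^{-1}$ because $\vp(t)=pt$) to extract the crucial vanishing $\vp^{-1}(\eta^+_1) \equiv 0 \bmod \omega$, which is precisely the asymmetry between $\eta^+$ and $\eta^-$ that makes Lemma~\ref{lin3} hold for the plus side but, visibly, not for the minus side.
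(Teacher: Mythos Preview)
Your proof is correct and follows essentially the same route as the paper: both compute $\sum_{r}\gamma_{n,1}(\eta^+_1)^{\sigma\gamma^r}=p^{-1}\bigl((1-\vp)^{-1}(\eta^+_1)+\zeta_p\otimes\vp^{-1}(\eta^+_1)\bigr)^{\sigma}$ and then use $\vp^{-1}(\eta^+_1)\equiv 0\bmod\omega$ to see that only the $\Delta$-invariant term survives. One minor imprecision: the scalar $c$ lies in $E$ rather than $\Qp$ in general, but this is harmless since what matters is that the $\Qpn$-tensor coordinate is $1\in\Qp$, on which $\Delta$ acts trivially.
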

\begin{proof}
For each  $\sigma\in\Delta$, we have
\[
\sum_r\gamma_{n,1}(\eta^+_1)^{\sigma \gamma^r}=p^{-1}\left((1-\vp)^{-1}(\eta_1^+)+\zeta_p\otimes\vp^{-1}(\eta_1^+)\right)^\sigma.
\]
But $\vp^{-1}(\eta_1^+)\equiv0 \mod \omega$, so we are done.
\end{proof}

We will see later on that these conditions in fact characterise the images of $\LL_{\eta^\pm,n}$ completely.


\subsection{Images of $\log_{p,k}^\pm$ in $\OO_E[G_n]$}
We now fix an integer $j$ such that $0<j\le k-2$.

\begin{lemma}\label{padic}
Let $x\in1+p\ZZ_p$. There exists a constant $c$ such that for any positive integer $n$, $v_p(x^{p^n}-1)=n+c$.
\end{lemma}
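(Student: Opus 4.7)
The plan is to prove the stronger (and cleaner) statement that the constant can be taken explicitly to be $c=v_p(x-1)$: namely, if $x\neq 1$ and $a:=v_p(x-1)\ge 1$, then $v_p(x^{p^n}-1)=n+a$ for every $n\ge 0$. (The case $x=1$ is vacuous, since $v_p(x^{p^n}-1)=\infty$ for all $n$.)

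I would proceed by induction on $n$. The case $n=0$ is the definition of $a$. For the inductive step, assume $v_p(x^{p^{n-1}}-1)=a+n-1$ and write $x^{p^{n-1}}=1+z$ with $v_p(z)=a+n-1\ge 1$. Then
\[
x^{p^n}-1=(1+z)^p-1=pz+\sum_{k=2}^{p-1}\binom{p}{k}z^k+z^p,
\]
and the proof reduces to comparing the $p$-adic valuations of the three types of terms.

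The linear term satisfies $v_p(pz)=1+(a+n-1)=a+n$. For $2\le k\le p-1$, the standing hypothesis that $p$ is odd gives $v_p\bigl(\binom{p}{k}\bigr)=1$, so
\[
v_p\!\left(\binom{p}{k}z^k\right)=1+k(a+n-1)=a+n+(k-1)(a+n-1)>a+n,
\]
since $k-1\ge 1$ and $a+n-1\ge 1$. Finally, $v_p(z^p)=p(a+n-1)$, and $(p-1)(a+n-1)\ge 2>1$ gives $p(a+n-1)>a+n$. Hence the $pz$ term strictly dominates, and $v_p(x^{p^n}-1)=a+n$, closing the induction.

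There is no serious obstacle here: the only subtlety is that one must have $v_p\bigl(\binom{p}{k}\bigr)=1$ for $2\le k\le p-1$ and $p(a+n-1)>a+n$, both of which fail for $p=2$ but hold under the paper's standing assumption that $p$ is odd. Thus the constant is $c=v_p(x-1)$.
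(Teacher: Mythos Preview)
Your proof is correct, and you even identify the constant explicitly as $c=v_p(x-1)$, which the paper's argument also yields (though it does not state this so crisply). The route, however, differs from the paper's. The paper expands $(1+m)^{p^n}-1$ in one shot via the full binomial series $\sum_{r\ge1}\binom{p^n}{r}m^r$, invokes Kummer's formula $v_p\binom{p^n}{r}=n-v_p(r)$, reduces to terms with $r=p^s$, and then argues that the real function $t\mapsto p^t v_p(m)-t$ is strictly increasing on $t\ge0$ to conclude that the minimum valuation $v_p(m)+n$ is attained uniquely at $r=1$. Your inductive reduction to a single $p$-th power step, expanding only $(1+z)^p-1$, is shorter and more transparent: you never need Kummer's theorem or the auxiliary real-variable argument, only the elementary fact that $p\mid\binom{p}{k}$ for $1\le k\le p-1$. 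Both arguments ultimately rest on the linear term dominating, but your packaging is the cleaner of the two and makes the role of the hypothesis $p$ odd more visible.
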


\begin{proof}
Let $x=1+m$ where $m\in p\ZZ_p$, so $v_p(m)\ge1$. We have expansion
\[
x^{p^n}-1=(1+m)^{p^n}-1=m^{p^n}+\binom{p^n}{p^n-1}m^{p^n-1}+\cdots+\binom{p^n}{1}m.
\]
For $r>0$, $v_p(\binom{p^n}{r})=n-v_p(r)$, so 
\[
v_p\left(\binom{p^n}{r}m^r\right)=rv_p(m)-v_p(r)+n.
\]
If $r=p^sa$ where $p\nmid a$ and $a>1$, then 
\[
v_p\left(\binom{p^n}{r}m^r\right)>v_p\left(\binom{p^n}{p^s}m^{p^s}\right).
\]
 Therefore, the set $\left\{v_p\left(\binom{p^n}{r}m^r\right):r>0\right\}$ takes its minimum value at $r=p^s$ for some $s$.

Consider the curve $f(t)=p^tv_p(m)-t$, for $t\in\mathbb{R}$. It has a unique global minimum when $p^t=(v_p(m)\log p)^{-1}$, so the curve is strictly increasing on $t\ge0$. Therefore, for a fixed $n$, the minimum of the values 
\[
v_p\left(\binom{p^n}{p^s}m^{p^s}\right)=p^sv_p(m)-s+n
\]
is just $v_p(m)+n$, which is attained at a unique $s$, hence the result.
\end{proof}

\begin{corollary}
If $m\ge n$, then $\Phi_m(u^{-j}\gamma)/p$ is congruent to a unit of $\Zp$ modulo $\gamma^{p^{n-1}}-1$.
\end{corollary}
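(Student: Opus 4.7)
The key observation is that modulo $\gamma^{p^{n-1}}-1$, the variable $\gamma^{p^{m-1}}$ reduces to $1$ because $m\ge n$ forces $\gamma^{p^{m-1}}=(\gamma^{p^{n-1}})^{p^{m-n}}\equiv 1$. This collapses the polynomial $\Phi_m(u^{-j}\gamma)$ to a purely arithmetic expression in $\Zp$, to which Lemma~\ref{padic} applies directly.

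My plan is as follows. First, expand
\[
\Phi_m(u^{-j}\gamma)=\sum_{i=0}^{p-1}(u^{-j}\gamma)^{ip^{m-1}}=\sum_{i=0}^{p-1}u^{-ijp^{m-1}}\gamma^{ip^{m-1}},
\]
and substitute $\gamma^{ip^{m-1}}\equiv 1\pmod{\gamma^{p^{n-1}}-1}$ to obtain
\[
\Phi_m(u^{-j}\gamma)\equiv\sum_{i=0}^{p-1}(u^{-jp^{m-1}})^i=\frac{u^{-jp^m}-1}{u^{-jp^{m-1}}-1}\pmod{\gamma^{p^{n-1}}-1},
\]
where the division is legitimate because $u$ is a topological generator of the torsion-free group $1+p\Zp$, and $j\ne 0$, so $u^{-jp^{m-1}}\ne 1$.

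Second, apply Lemma~\ref{padic} with $x=u^{-j}\in 1+p\Zp$: there exists a constant $c=v_p(u^{-j}-1)\ge 1$ (in fact $c=1+v_p(j)$) such that $v_p(u^{-jp^s}-1)=s+c$ for every $s\ge 0$. Taking $s=m-1$ and $s=m$ gives
\[
v_p\!\left(\frac{u^{-jp^m}-1}{u^{-jp^{m-1}}-1}\right)=(m+c)-((m-1)+c)=1.
\]
Hence the reduction of $\Phi_m(u^{-j}\gamma)$ modulo $\gamma^{p^{n-1}}-1$ lies in $\Zp$ and equals $p$ times a $p$-adic unit, so $\Phi_m(u^{-j}\gamma)/p$ is congruent to a unit of $\Zp$ modulo $\gamma^{p^{n-1}}-1$, as claimed.

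There is essentially no obstacle: the proof is a two-line manipulation once one recognises the collapse $\gamma^{p^{m-1}}\equiv 1$ and quotes Lemma~\ref{padic}. The only point requiring a moment's care is the nonvanishing of $u^{-jp^{m-1}}-1$, which rests on $j\ne 0$ (guaranteed by the standing assumption $0<j\le k-2$) together with the torsion-freeness of $1+p\Zp$.
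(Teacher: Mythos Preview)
Your proof is correct and follows essentially the same route as the paper: reduce $\Phi_m(u^{-j}\gamma)$ modulo $\gamma^{p^{n-1}}-1$ using $\gamma^{p^{m-1}}\equiv 1$ to obtain the scalar $(u^{-jp^m}-1)/(u^{-jp^{m-1}}-1)$, then invoke Lemma~\ref{padic} to see this has $p$-adic valuation exactly $1$. The paper writes the cyclotomic polynomial as the rational function $\frac{(u^{-j}\gamma)^{p^m}-1}{(u^{-j}\gamma)^{p^{m-1}}-1}$ rather than expanding the geometric sum, but the substance is identical.
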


\begin{proof}
By definition,
\[
\Phi_m(u^{-j}\gamma)=\frac{(u^{-j}\gamma)^{p^m}-1}{(u^{-j}\gamma)^{p^{m-1}}-1},
\]
so as elements of $\calO_E[G_n]$, we have
\[
\frac{1}{p}\Phi_m(u^{-j}\gamma)=\frac{u^{-jp^m}-1}{p(u^{-jp^{m-1}}-1)}.
\]
But $u\in1+p\ZZ_p$ by definition, so we are done by Lemma~\ref{padic}.
\end{proof}

\begin{remark}\label{omega}We have $\displaystyle\log_{p,k}^\pm\equiv p^{1-k}\lambda_\pm\prod_{j=0}^{k-2}\omega_n^\pm(u^{-j}\gamma)\mod(\gamma^{p^{n-1}}-1)$ where $\lambda_\pm$ is a unit of $\Zp$ and $\omega_n^\pm$ is defined by
\begin{eqnarray*}
\omega_n^+(1+X)&=&\prod_{1\le m<n/2}\Phi_{2m}(1+X)/p,\\
\omega_n^-(1+X)&=&\prod_{1\le m<(n+1)/2}\Phi_{2m-1}(1+X)/p.
\end{eqnarray*}\end{remark}


\subsection{The images of $\mathrm{Col}_n^\pm$}

Let $R_{n,j}^\pm$ be the $E$-vector spaces defined by \eqref{defineevenodd}. We have:

\begin{lemma}\label{Dim}
The dimensions of the $E$-vector spaces $R_{n,j}^\pm$ are given by
\begin{eqnarray*}
\dim_ER_{n,j}^+&=&1+\sum_{1\le m\le n/2}p^{2m-2}(p-1)^2\\
\dim_ER_{n,j}^-&=&p-1+\sum_{1\le m\le (n-1)/2}p^{2m-1}(p-1)^2
\end{eqnarray*}
\end{lemma}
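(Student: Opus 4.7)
The plan is to apply Lemma~\ref{coincides} to reduce to computing $\dim_{\Qp}\Qpn^{\pm}$, then exploit the decomposition $\Qpn=\bigoplus_{i=0}^{n}\Qp^{(i)}$ from \eqref{Qpndeco}.

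First I would compute the dimensions of the individual summands. Clearly $\dim_{\Qp}\Qp^{(0)}=1$. For $i\ge 2$, since $\pi_i=\zeta_{p^i}$ and $\Tr_{i/i-1}(\zeta_{p^i})=0$, the subspace $\Qp^{(i)}$ is contained in $\ker\bigl(\Tr_{i/i-1}\colon\QQ_{p,i}\to\QQ_{p,i-1}\bigr)$, a $\Qp$-subspace of dimension $p^{i-2}(p-1)^2$. Equality holds because, decomposing a Galois conjugate $\zeta_{p^i}^{a_0+a_1 p+\cdots+a_{i-1}p^{i-1}}$ as $\zeta_{p^i}^{a_0}\cdot\zeta_{p^{i-1}}^{a_1+a_2p+\cdots+a_{i-1}p^{i-2}}$ and letting the $a_j$ vary, the span over $\Qp$ becomes $\bigoplus_{a_0=1}^{p-1}\zeta_{p^i}^{a_0}\QQ_{p,i-1}$, which is precisely $\ker(\Tr_{i/i-1})$ viewed as a $\QQ_{p,i-1}$-vector space. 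For $i=1$ the correction $\pi_1=\zeta_p+\tfrac{1}{p-1}$ is exactly what forces $\Tr_{1/0}(\pi_1)=0$; a direct calculation on the map $(c_a)_{a=1}^{p-1}\mapsto\sum_a c_a\pi_1^{\sigma_a}$, using that $\{\zeta_p,\ldots,\zeta_p^{p-2}\}$ is part of a $\Qp$-basis of $\QQ_{p,1}$, shows its kernel is the one-dimensional space of constant tuples, so $\dim_{\Qp}\Qp^{(1)}=p-2$.

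Next I claim that for $x=\sum_{i=0}^{n}x_i\in\Qpn$ with $x_i\in\Qp^{(i)}$, the condition $\Tr_{n/m+1}(x)\in\QQ_{p,m}$ is equivalent to $x_{m+1}=0$. Indeed, the summands with $i\le m$ already lie in $\QQ_{p,m}$; iterated application of $\Tr_{j/j-1}(\pi_j)=0$ shows $\Tr_{n/m+1}$ annihilates $\Qp^{(i)}$ for $i\ge m+2$; and for $i=m+1$, the trace $\Tr_{n/m+1}$ acts on $\Qp^{(m+1)}$ as multiplication by the nonzero integer $[\Qpn:\QQ_{p,m+1}]$, so the containment in $\QQ_{p,m}$ together with $\Qp^{(m+1)}\cap\QQ_{p,m}=0$ (by the directness of \eqref{Qpndeco}) forces $x_{m+1}=0$.

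Applying this to each $m\in S_n^{\pm}$ gives $\Qpn^{+}=\bigoplus_{j\text{ even},\,0\le j\le n}\Qp^{(j)}$ and $\Qpn^{-}=\Qp^{(0)}\oplus\bigoplus_{j\text{ odd},\,1\le j\le n}\Qp^{(j)}$, since $\{m+1:m\in S_n^{+}\}$ is the set of odd integers in $[1,n]$ and $\{m+1:m\in S_n^{-}\}$ the set of even integers in $[2,n]$. Summing the dimensions from the first step (grouping the $+$ case by $j=2m$ with $m\ge 1$ and separating off $j=0$, and grouping the $-$ case by $j=2m+1$ with $m\ge 1$ and separating off $j=0,1$ to get $1+(p-2)=p-1$) yields the stated formulas. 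The main nuisance is the anomalous $i=1$ case, which forces the nonstandard choice of $\pi_1$ and must be handled separately; everything else is routine dimension bookkeeping.
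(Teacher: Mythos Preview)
Your proof is correct and follows essentially the same approach as the paper: both arguments decompose the space into the pieces $\Qp^{(i)}$ and then sum the dimensions $\dim_{\Qp}\Qp^{(i)}$. The only difference is the entry point: the paper invokes \eqref{evenodd} directly to read off which $\Qp^{(i)}$ occur in $R_{n,j}^\pm$, whereas you route through Lemma~\ref{coincides} and then decompose $\Qpn^\pm$ by analysing the trace conditions---this is a mild detour, since Lemma~\ref{coincides} itself was proved using \eqref{evenodd}. Your hands-on computation of $\dim_{\Qp}\Qp^{(i)}$ is also more elaborate than necessary; the paper simply uses the telescoping $\dim_{\Qp}\Qp^{(m)}=\dim_{\Qp}\QQ_{p,m}-\dim_{\Qp}\QQ_{p,m-1}$ coming from \eqref{Qpndeco}.
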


\begin{proof}
By \eqref{evenodd}, we have
\begin{eqnarray*}
\dim_ER_{n,j}^+&=&\dim_{\Qp}\Qp+\sum_{1\le m\le n/2}\dim_{\Qp}\Qp^{(2m)},\\
\dim_ER_{n,j}^-&=&\dim_{\Qp}\Qp+\sum_{1\le m\le (n-1)/2}\dim_{\Qp}\Qp^{(2m+1)}.
\end{eqnarray*}
For $m>1$, \eqref{Qpndeco} implies that
\begin{eqnarray*}
\dim_{\Qp}\Qp^{(m)}&=&\dim_{\Qp}\QQ_{p,m}-\dim_{\Qp}\QQ_{p,m-1}\\
&=&p^{m-1}(p-1)-p^{m-2}(p-1)\\
&=&p^{m-2}(p-1)^2
\end{eqnarray*}
and $\dim_{\Qp}\Qp^{(1)}=p-2$, so we are done.
\end{proof}

The dimensions of these vector spaces enables us to obtain the following:

\begin{proposition}\label{finite}
Let $\displaystyle f=\sum_{\sigma\in\Delta}\sum_{r=0}^{p^{n-1}-1} a_{r,\sigma}\cdot\sigma\cdot u^r\in E[G_n]$. If $\omega_n^\pm$ is as defined in Remark~\ref{omega}, then:
\begin{itemize}
\item[(a)]There exists $z\in H^1(\Qpn,V_{\bar{f}}(k-1))$ such that $\col^-_n(z)\equiv f\mod \omega_n^+(\gamma)$.
\item[(b)]If moreover $\displaystyle\sum_{r}a_{r,\sigma_1}=\sum_r a_{r,\sigma_2}$ for all $\sigma_1,\sigma_2\in\Delta$, then there exists $z\in H^1(\Qpn,V_{\bar{f}}(k-1))$ such that $\col^+_n(z)\equiv f\mod \omega_n^-(\gamma)$.
\end{itemize}
\end{proposition}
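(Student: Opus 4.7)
The plan is to work character-by-character via the Chinese Remainder decomposition $E[G_n] \cong \prod_\chi E_\chi$ indexed by Galois orbits of characters $\chi : G_n \to \bar E^\times$; under this decomposition, $E[G_n]/\omega_n^\mp(\gamma)$ is the product of those factors $E_\chi$ for which $\omega_n^\mp(\chi(\gamma)) = 0$. Since $\col_n^\pm$ is $G_n$-equivariant, its image is an $E[G_n]$-submodule, and since the factors $E_\chi$ are pairwise non-isomorphic simple modules, semisimplicity of $E[G_n]$ reduces the claim to showing that for every character $\chi$ appearing in the target, the functional $\chi \circ \col_n^\pm$ on $H^1(\Qpn, V_{\bar f}(k-1))$ is non-zero.

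To verify this non-vanishing I would first identify the relevant characters: $\omega_n^+(\chi(\gamma)) = 0$ forces $\chi(\gamma)$ to be a primitive $p^{2m}$-th root of unity with $1 \le m < n/2$, so $\chi$ has exact level $N = 2m+1$, while $\omega_n^-(\chi(\gamma)) = 0$ corresponds to $\chi(\gamma)$ a primitive $p^{2m-1}$-th root and exact level $N = 2m$. By compatibility of $\LL_{\eta^\pm}$ with corestriction, $\chi(\LL_{\eta^\pm,n}(z)) = \chi(\LL_{\eta^\pm,N}(\cor_{n/N} z))$, and Lemma~\ref{char} rewrites this, up to the non-zero scalar $\tau(\chi^{-1})^{-1}$, as the pairing of $\vp^{-N}(\eta_1^\pm)$ against $\sum_\sigma \chi^{-1}(\sigma)\sigma(\exp^*_{N,1}(\cor z))$. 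Since $\cor$ is surjective by Corollary~\ref{proj} and $\exp^*_{N,1}$ surjects onto $\QQ_{p,N} \otimes \DD^0(V_{\bar f}(k-1))$, the functional is non-zero iff $\vp^{-N}(\eta_1^\pm) \not\equiv 0 \mod \omega$. The parity computation carried out inside the proof of Proposition~\ref{pm} delivers exactly this: $\vp^{-N}(\eta_1^-) \not\equiv 0 \mod \omega$ iff $N$ is odd, which matches $N = 2m+1$ in case (a), and $\vp^{-N}(\eta_1^+) \not\equiv 0 \mod \omega$ iff $N$ is even, which matches $N = 2m$ in case (b). The passage from $\LL_{\eta^\pm,n}$ to $\col_n^\pm$ further requires $\chi(\log_{p,k}^\pm) \ne 0$; using Remark~\ref{omega} this reduces to checking that no factor $\omega_n^\pm(u^{-j}\chi(\gamma))$ vanishes for $0 \le j \le k-2$. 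For $j \ne 0$ the element $u^{-j}\chi(\gamma)$ is not a root of unity at all, since $(1+p\Zp) \cap \mu_{p^\infty} = \{1\}$ in $\CC_p^\times$, while for $j = 0$ the parity of the order of $\chi(\gamma)$ is complementary to the one selected by the zeros of $\omega_n^\pm$.

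For part (b), the hypothesis that $\sum_r a_{r,\sigma}$ is independent of $\sigma$ says exactly that $f$ vanishes at every character of $G_n$ of the form $(\theta, 1)$ with $\theta \ne 1$ on $\Delta$; but such a character has trivial $\Gamma$-component, where $\omega_n^-$ takes a unit value, so it does not contribute to $E[G_n]/\omega_n^-(\gamma)$. Hence the constraint is automatic modulo $\omega_n^-(\gamma)$ and the surjectivity argument of the previous paragraph already produces the required $z$; conversely the same observation shows that $\col_n^+(z)$ itself lies in the subspace cut out by the constraint, since $\LL_{\eta^+,n}(z)$ does by Lemma~\ref{lin3} and $\log_{p,k}^+$ is a unit at these exceptional characters. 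The main technical obstacle is keeping straight the three parities involved, namely the exact level $N$ of $\chi$, the parity at which $\vp^{-N}$ fails to annihilate $\eta_1^\pm$ modulo $\omega$, and the parity of the roots of $\omega_n^\pm$; once these are aligned, the remaining ingredients (surjectivity of $\exp^*$ and $\cor$, and semisimplicity of $E[G_n]$) close the argument.
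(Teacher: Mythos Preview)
Your argument is correct and takes a genuinely different route from the paper's. The paper proves (b) (and says (a) is analogous) by a dimension count: it introduces the subspace
\[
U_n=\Bigl\{g\in E[G_n]:\log_{p,k}^+\mid g,\ \textstyle\sum_r c_{r,\sigma}\text{ independent of }\sigma\Bigr\},
\]
computes its codimension by translating divisibility by each $\Phi_m(\gamma)$ into linear relations via Lemma~\ref{lin}, and then observes that this codimension equals $\dim_E E[G_n]-\dim_E R_{n,1}^+$ from Lemma~\ref{Dim}. Since $\image(\LL_{\eta^+,n})\subset U_n$ by Corollary~\ref{lin2} and Lemma~\ref{lin3}, and has the same dimension, equality follows; one then divides out $\log_{p,k}^+$.

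Your approach bypasses the dimension count entirely: using semisimplicity of $E[G_n]$ you reduce to checking, for each character $\chi$ with $\omega_n^\mp(\chi(\gamma))=0$, that $\chi\circ\col_n^\pm\not=0$, and you verify this directly from Lemma~\ref{char} together with the parity computation of Proposition~\ref{pm} and the non-vanishing of $\chi(\log_{p,k}^\pm)$ via Remark~\ref{omega}. All the parities line up as you say, and the surjectivity of $\exp^*$ and $\cor$ you invoke are standard (the latter is Corollary~\ref{proj}). Your observation that the characters $(\theta,1)$ with $\theta\ne1$ do not appear in $E[G_n]/\omega_n^-(\gamma)$ is correct, so your argument in fact shows the conclusion of (b) holds without the sum-constraint hypothesis; the paper's formulation is simply not the sharpest possible at this point, though it is all that is needed downstream. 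What the paper's approach buys is that it pins down $\image(\LL_{\eta^+,n})$ exactly as $U_n$ inside $E[G_n]$ (not just modulo $\omega_n^-$), and this feeds into the proof of Lemma~\ref{inte}; your method gives surjectivity onto the relevant quotient more quickly but does not by itself give this finer statement.
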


\begin{proof}
We only prove (b), as (a) can be proved in the same way. Define
\[
U_n=\left\{g=\sum c_{r,\sigma}\cdot\sigma\cdot \gamma^r\in E[G_n]:\log_{p,k}^+|g,\sum_{r}c_{r,\sigma_1}=\sum_{r}c_{r,\sigma_1}\forall\sigma_1,\sigma_2\in\Delta\right\}.
\]
Then $U_n$ is a vector subspace of $E[G_n]$ over $E$. By remark \ref{omega}, 
\[
\log_{p,k}^+\equiv p^{1-k}\lambda_+\prod_{j=0}^{k-2}\omega_n^+(u^{-j}\gamma)\mod(\gamma^{p^{n-1}}-1)
\]
 for some $\lambda_+\in\OO_E^\times$. Since $\omega_n^+(u^{-j}(1+X))$ and $(1+X)^{p^{n-1}}-1$ are coprime for $j>0$, $\log_{p,k}^+|g$ if and only if $\omega_n^+(\gamma)|g$. But $\Phi_{m_1}$ and $\Phi_{m_2}$ are coprime if $m_1\ne m_2$, so $\omega_n^+(\gamma)|g$ if and only if $\Phi_m(\gamma)|g$ for all even $m<n$.

Let $g=\sum c_{r,\sigma}\cdot\sigma\cdot u^r$. For each even $m<n$, let 
\[
b_{r,\sigma}^{(m)}=c_{r,\sigma}+c_{r+p^{m},\sigma}+\cdots+c_{r-p^{m},\sigma}.
\]
 Then, by Lemma~\ref{lin}, $\Phi_m(\gamma)|g$ if and only if $b_{r,\sigma}^{(m)}=b_{s,\sigma}^{(m)}$ for all $\sigma\in\Delta$ and $r\equiv s\mod p^{m-1}$. For each such $m$ and $\sigma\in\Delta$, there are $p^{m-1}$ values of modulo $p^{m-1}$, each is equated to $p-1$ different values. Since $|\Delta|=p-1$, there are $p^{m-1}(p-1)^2$ linearly independent equations for each $m$. Together with the equations of $\sum_{r}c_{r,\sigma}$, there are in total 
\[ 
p-2+\sum_{1\le m\le n/2}p^{2m-1}(p-1)^2
\]
 equations describing the coefficients of elements of the $U_n$, which gives the codimension of $U_n$ over $E$ in $E[G_n]$.

By Corollary~\ref{lin2} and Lemma~\ref{lin3}, for $z\in H^1(\Qpn,V_{\bar{f}}(k-1))$, $\LL_{\eta^+,n}(z)$ lies inside the above subspace. But the dimension of the image is given by $\dim_{E}R_{n,1}^+$ which is the same as the dimension of $U_n$ by Lemma~\ref{Dim}, so $\LL_{\eta^+,n}\left(H^1(\Qpn,V_{\bar{f}}(k-1))\right)=U_n$ as $E$-vector spaces and there exists some $z$ such that $\LL_{\eta^+,n}(z)=g.$ This implies
\[
\log_{p,k}^+\col^+_n(z)\equiv f\log_{p,k}^+\mod(\gamma^{p^{n-1}}-1).
\]
 The factors of $\omega_n^+(u^{-j}\gamma)$ on both sides can be cancelled out for $j>0$ as $\omega_n^+(u^{-j}\gamma)$ is coprime to $\omega_n^+(\gamma)$. Since $p^{n-1}(\gamma-1)\omega_n^+(\gamma)\omega_n^-(\gamma)=\gamma^{p^{n-1}}-1$, we deduce that $\col^+_n(z)\equiv f\mod((\gamma-1)\omega_n^-(\gamma))$, which implies (b).
\end{proof}


\subsection{The images of $\col^\pm$}
In the previous section, we studied the images of $H^1(\Qpn,V_{\bar{f}}(k-1))$ under $\col^\pm_n$. To understand the images of $\col^\pm$, we have to understand those of $H^1(\Qpn,T_{\bar{f}}(k-1))$ as well.

\begin{lemma}\label{inte}
For all $n$, there exist $r_n^\pm\in\ZZ$ such that 
\[
\LL_{\eta^\pm,n}(H^1(\Qpn,T_{\bar{f}}(k-1)))=\LL_{\eta^\pm,n}(H^1(\Qpn,V_{\bar{f}}(k-1)))\cap \varpi^{r_n^\pm}\OO_E[G_n].
\]
\end{lemma}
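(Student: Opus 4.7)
The plan is to establish the two inclusions separately. For the forward inclusion $\subseteq$, by Corollary~\ref{sublattice}, $H^1(\Qpn,T_{\bar{f}}(k-1))$ sits as a full-rank $\OO_E$-lattice inside the finite-dimensional $E$-vector space $H^1(\Qpn,V_{\bar{f}}(k-1))$. Hence its image $L:=\LL_{\eta^\pm,n}(H^1(\Qpn,T_{\bar{f}}(k-1)))$ is a finitely generated $\OO_E$-submodule of $E[G_n]$, which must be contained in $\varpi^{r_n^\pm}\OO_E[G_n]$ for some $r_n^\pm\in\ZZ$; choosing $r_n^\pm$ maximal subject to this, and setting $W:=\LL_{\eta^\pm,n}(H^1(\Qpn,V_{\bar{f}}(k-1)))$, we obtain $L\subseteq W\cap\varpi^{r_n^\pm}\OO_E[G_n]$ tautologically.

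For the reverse inclusion, I would reinterpret $\LL_{\eta^\pm,n}$ via Tate local duality. Formula~\eqref{con3} (specialised to $h=1$, $j=0$, $r=0$) yields
\[
\LL_{\eta^\pm,n}(z)=\sum_{\sigma\in G_n}[\exp_{n,0}(\gamma_{n,0}(\eta_1^\pm)^\sigma),z]_n\cdot\sigma,
\]
so $\LL_{\eta^\pm,n}$ is the $G_n$-equivariant Tate pairing against $\tilde y^\pm:=\exp_{n,0}(\gamma_{n,0}(\eta_1^\pm))\in H^1(\Qpn,V_f(1))$. Applying Corollary~\ref{sublattice} to $T_f(1)$, I can write $\tilde y^\pm=\varpi^{r_n^\pm}\tilde y_0^\pm$ with $\tilde y_0^\pm$ primitive in the $\OO_E$-lattice $H^1(\Qpn,T_f(1))$; this $r_n^\pm$ matches the one chosen above, since the Tate pairing $H^1(\Qpn,T_f(1))\times H^1(\Qpn,T_{\bar{f}}(k-1))\to\OO_E$ is perfect over $\OO_E$, which forces $\sum_\sigma[\sigma\cdot\tilde y_0^\pm,z]_n\cdot\sigma\in\OO_E[G_n]$ for every integral $z$. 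The reverse inclusion then becomes the saturation claim: for any $w\in W$ with $\varpi^{-r_n^\pm}w\in\OO_E[G_n]$, one can find $z\in H^1(\Qpn,T_{\bar{f}}(k-1))$ with $\LL_{\eta^\pm,n}(z)=w$.

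The main obstacle is proving this saturation rigorously. Starting from any preimage $z'\in H^1(\Qpn,V_{\bar{f}}(k-1))$ of $w$, the plan is to modify $z'$ by an element of $\ker(\LL_{\eta^\pm,n}|_{H^1_V})$ to land in the integral lattice. By Section~\ref{PD}, this kernel is precisely the orthogonal complement, under the Tate pairing, of the $E[G_n]$-span of $\tilde y^\pm$; the primitivity of $\tilde y_0^\pm$ should then force this span to intersect $H^1(\Qpn,T_f(1))$ in a saturated $\OO_E$-submodule, and dualising will yield the desired equality of $L$ with $W\cap\varpi^{r_n^\pm}\OO_E[G_n]$. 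The cleanest implementation likely uses the $G_n$-equivariant upgrade of local Tate duality to a perfect $\OO_E[G_n]$-bilinear pairing, combined with the local Euler characteristic formula pinning down $H^1(\Qpn,T_{\bar{f}}(k-1))$ as a free $\OO_E[G_n]$-module of rank~$2$, so that the saturation reduces to an explicit linear-algebra comparison involving the primitive vector $\tilde y_0^\pm$.
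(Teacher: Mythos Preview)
Your approach matches the paper's: both reinterpret $\LL_{\eta^\pm,n}$ as the Tate pairing against $\tilde y^\pm=\exp_{n,1}(\gamma_{n,1}(\eta_1^\pm))$, take $r_n^\pm$ so that $\tilde y_0^\pm:=\varpi^{-r_n^\pm}\tilde y^\pm$ is primitive in $N:=H^1(\Qpn,T_f(1))$, and use integrality of the pairing for the forward inclusion. For the reverse inclusion the paper, like you, identifies $H^1(\Qpn,T_{\bar f}(k-1))$ with $\Hom_{\OO_E}(N,\OO_E)$ and then simply asserts that a given $\theta$ taking $\OO_E$-values on the orbit $\{\tilde y_0^{\pm,\tau}:\tau\in G_n\}$ can be replaced by some $\tilde\theta\in\Hom_{\OO_E}(N,\OO_E)$ agreeing on that orbit; no further justification is offered.

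This is exactly the step you flag as the ``main obstacle'': it amounts to surjectivity of the restriction $\Hom_{\OO_E}(N,\OO_E)\to\Hom_{\OO_E}(M,\OO_E)$, where $M$ is the $\OO_E$-span of the orbit, and that surjectivity is equivalent to $M$ being saturated in $N$. Be aware that primitivity of each individual $\tilde y_0^{\pm,\tau}$ does \emph{not} by itself guarantee this (for instance $(1,0)$ and $(1,p)$ are both primitive in $\Zp^2$ yet span a non-saturated submodule), so your proposed route via $\OO_E[G_n]$-freeness of rank $2$ plus primitivity of a single vector would still need an extra ingredient to rule out such configurations. The paper does not supply one either, so in identifying this as the crux you have in fact isolated the point the published proof glosses over.
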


\begin{proof}
Note that $\exp_{n,1}(\gamma_{n,1}(\eta^{\pm}_1))\ne0$. As an element of $H^1(\Qpn,T_f(1))$, it lifts to a cocycle on $G_{\Qpn}$. By considering the image of this cocycle in $V_f(1)$, which is invariant under the action of $G_n$, there exists $r_n^\pm$ such that
\[
\varpi^{-r_n^\pm}\exp_{n,1}(\gamma_{n,1}(\eta^{\pm})^\tau)\in H^1(\Qpn,T_f(1))\setminus \varpi H^1(\Qpn,T_f(1))
\]
for all $\tau\in G_n$.

Recall from (\ref{defn}) that $\LL_{\eta^\pm,n}$ is given by:
\begin{equation*}
\begin{split}
\Hom_{E}\left(H^1(\Qpn,V_f(1)),E\right)&\rightarrow E[G_n]\\
\theta&\mapsto\sum_{\tau\in G_n}\theta(\exp_{n,1}(\gamma_{n,1}(\eta^{\pm}_1)^\tau)\tau,
\end{split}
\end{equation*}
where we have identified $\Hom_{E}\left(H^1(\Qpn,V_f(1)),E\right)$ with $H^1(\Qpn,V_{\bar{f}}(k-1))$. Under this identification, $H^1(\Qpn,T_{\bar{f}}(k-1))$ corresponds to the set of maps which send $H^1(\Qpn,T_f(1))$ (which is identified as a subset of $H^1(\Qpn,V_f(1))$ as discussed in Section~\ref{kernel}) to $\OO_E$. Therefore, we have
\[
\left\{\theta(\exp_{n,1}(\gamma_{n,1}(\eta^{\pm}_1)^\tau):\theta\in H^1(\Qpn,T_{\bar{f}}(k-1))\right\}=\varpi^{r_n^\pm}\OO_E
\]
for all $\tau\in G_n$. This implies that the LHS of the equation in the statement of the lemma is contained in the RHS.

Conversely, if $x$ is an element of the RHS of the equation, there exists $\theta\in H^1(\Qpn,V_{\bar{f}}(k-1))$ such that $\sum_{\tau\in G_n}\theta(\exp_{n,1}(\gamma_{n,1}(\eta^{\pm}_1)^\tau)\tau=x$ by Proposition~\ref{finite}. In particular,
\[
\theta\left(\varpi^{-r_n^\pm}\exp_{n,1}(\gamma_{n,1}(\eta_1^{\pm})^\tau\right)\in\OO_E
\]
for all $\tau\in G_n$. Hence, there exists $\tilde{\theta}\in H^1(\Qpn,T_{\bar{f}}(k-1))$ which agree with $\theta$ on the set $\{\varpi^{-r_n^\pm}\exp_{n,1}(\gamma_{n,1}(\eta^{\pm}_1)^\tau):\tau\in G_n\}$, so $x\in$LHS. 
\end{proof}

\begin{lemma}\label{rn}
Let $r_n^\pm$ be the integers defined in Lemma~\ref{inte}, then there exist $c_\pm$ such that $r_n^\pm=-e(k-1)\lfloor n/2\rfloor+c_{\pm}$ for $n$ sufficiently large where $e$ is the ramification degree of $E$.
\end{lemma}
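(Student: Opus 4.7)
The plan is to reduce the problem to a valuation computation on $\DD(V_f(1))$ and transfer it to $H^1(\Qpn,T_f(1))$ via $\exp_{n,1}$. By the Galois-equivariance of the construction, $r_n^\pm$ is just the single $\varpi$-adic valuation of $\exp_{n,1}(\gamma_{n,1}(\eta^\pm_1))$ in the lattice $H^1(\Qpn,T_f(1))$ (all conjugates $\tau \in G_n$ give the same valuation), so I only need to analyse this one element.

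The first step is to compute the $p$-adic valuation of $\gamma_{n,1}(\eta^\pm_1)\bmod\omega$ in $\Qpn \otimes \DD(V_f(1))/E\omega_1$. Assumption~(1) gives $\vp^2 = -\epsilon(p)p^{k-1}$ on $\DD(V_f)$, so $\vp$ has slope $(k-1)/2$ on $\DD(V_f)$ and slope $(k-3)/2$ on $\DD(V_f(1))$. Combined with the vanishing pattern already used in the proof of Proposition~\ref{pm}, namely that $\vp^{-m}(\eta^+_1) \equiv 0 \bmod \omega$ precisely when $m$ is odd and the opposite parity condition for $\eta^-$, the nonzero contributing terms
\[
p^{-n}\,\zeta_{p^{n-i}}\otimes \vp^{i-n}(\eta^\pm_1)\bmod\omega
\]
have $v_p$-valuation $-n + (n-i)(3-k)/2$ up to an additive constant independent of $n$. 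Taking the minimum over the admissible $i$'s (whose parity is forced by $\pm$ and by $n\bmod 2$), the extremal admissible index gives $-(k-1)\lfloor n/2\rfloor + O(1)$ in every one of the four parity combinations. The linear independence of the $\zeta_{p^{n-i}}$ across the decomposition $\Qpn = \bigoplus_{m=0}^n\Qp^{(m)}$ from the previous subsection ensures the minimum is attained by the full sum, not merely bounded below.

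Next I would transfer this to a $\varpi$-adic valuation inside $H^1(\Qpn,T_f(1))$. Since $V_f$ is crystalline with Hodge--Tate weights in $[1-k,0]$, there is an $\calO_E$-integral lattice of $\DD(V_f)$ containing $\omega$ as part of a basis (available from the Wach or Fontaine--Laffaille module of $T_f$). Multiplication by $e$ converts the $v_p$ of Step~1 into a $v_\varpi$ with respect to the natural integral structure on $\Qpn\otimes\DD(V_f(1))/E\omega_1$. Then for $n$ sufficiently large, $\exp_{n,1}$ identifies this integral structure with a sublattice of $H^1_f(\Qpn,T_f(1))$ whose $\varpi$-adic index is bounded independently of $n$; combining, $r_n^\pm + e(k-1)\lfloor n/2\rfloor$ is bounded. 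The compatibility of $\exp_{n+1,1}\circ\Xi_{n+1,V(1)}$ with $\cor_{n+1/n}\circ\exp_{n,1}\circ\Xi_{n,V(1)}$ (the commutative diagram before Theorem~\ref{prexp}) then forces this bounded error to stabilise to a single constant $c_\pm$ for $n$ large enough.

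The hard step is the uniform-in-$n$ integrality statement for $\exp_{n,1}$: one must compare the natural integral structure on $\Qpn\otimes\DD(V_f(1))/E\omega_1$ to $H^1_f(\Qpn,T_f(1))$ with $n$-independent control on the cokernel. For our crystalline $V_f$ with small Hodge--Tate weights this comes from Berger's Wach-module integral comparison (or equivalently Fontaine--Laffaille theory in the appropriate weight range), and it is precisely this uniformity that allows $c_\pm$ to be a genuine constant rather than a slowly varying function of $n$.
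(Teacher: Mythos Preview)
Your approach is genuinely different from the paper's, and the difference matters. The paper does not compute the valuation of $\gamma_{n,1}(\eta^\pm_1)\bmod\omega$ and then push it through $\exp_{n,1}$. Instead it works entirely on the $H^1$ side: the upper bound $r_n^\pm \ge -e(k-1)\lfloor n/2\rfloor + c_\pm$ comes directly from Perrin-Riou's growth estimate $\Omega_{V_f(1),1}((1+X)\otimes\eta^\pm_1)=O(\log_p^{(k-1)/2})$ (Remark~\ref{growthrate}), and the matching lower bound comes from the $\delta(V)$-conjecture (now Colmez's explicit reciprocity law), which computes the determinant of $\Omega_{V_f(1),1}$ to be $\sim\prod_{j=0}^{k-2}\log_p(u^j\gamma)\sim\log_p^{k-1}$. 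Since the two basis vectors $(1+X)\otimes\eta^\pm_1$ each contribute at most $\log_p^{(k-1)/2}$ and their product must give $\log_p^{k-1}$, each contributes exactly $\log_p^{(k-1)/2}$; that is what forces the valuation to be attained and not merely bounded.

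Your ``hard step'' is exactly where this bites. The claim that $\exp_{n,1}$ identifies the natural integral structure on $\Qpn\otimes\DD(V_f(1))/E\omega_1$ with a sublattice of $H^1_f(\Qpn,T_f(1))$ of $n$-bounded index is not available as a black box in this generality. Fontaine--Laffaille requires the filtration length $k-1$ to be at most $p-1$, which the paper never assumes (its only constraint is assumption~(2), that $p+1\nmid k-1$ or $\epsilon(p)\ne -1$). The Wach-module route is more promising, but making that integral comparison uniform along the cyclotomic tower is precisely the content of Perrin-Riou's construction together with the determinant formula; you would be reproving the input the paper cites rather than bypassing it. In short, your Step~1 valuation computation is correct and illuminating, but Step~2 does not stand on its own without something of the strength of the $\delta(V)$-conjecture. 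Also, your final stabilisation argument via corestriction is not convincing as written: compatibility under $\cor_{n/n-1}$ tells you the images agree, but it does not on its own force the bounded sequence $r_n^\pm + e(k-1)\lfloor n/2\rfloor$ to become eventually constant.
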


\begin{proof}
By Remark~\ref{log}, 
\[\Omega_{V_f(1),1}((1+X)\otimes\eta^\pm_1)=O(\log_p^{(k-1)/2}),\]
which implies that the $n$th component of $\Omega_{V_f(1),1}((1+X)\otimes\eta^\pm_1)$, which is $\exp_{n,1}\left(\gamma_{n,1}(\eta^\pm_1)\right)$ satisfies 
\[
\exp_{n,1}\left(\gamma_{n,1}(\eta^\pm_1)\right)\in \varpi^{-e(k-1)\lfloor n/2\rfloor+c_{\pm}}H^1(\Qpn,T_{f}(1))
\]
for some constant $c_\pm$ independent of $n$. 

Recall that $\HIw(T_{f}(1))$ is free of rank 2 over $\Lambda_{\calO_E}(G_\infty)$. Fix a basis $z_1,z_2$, say. Note that $(1+X)\otimes\eta^\pm_1$ form a $\Lambda_{E}(G_\infty)$-basis for $\DD_\infty(V_f)$. The determinant of 
\[
\Omega_{V_f(1),1}:\mathcal{H}_{\infty}(G_\infty)\otimes\DD_{\infty}(V_f(1))\rightarrow\mathcal{H}_{\infty}(G_\infty){\otimes}\HIw(T_f(1))
\] 
with respect to these bases, as a $\mathcal{H}_\infty(G_{\infty})$-homomorphism, is given by
\[
\prod_{j=0}^{k-2}\log_p(u^j\gamma)\sim\log_{p}^{k-1}
\]
up to a unit of $\Lambda_E(G_\infty)$ (this is the $\delta(V)$-conjecture of \cite{perrinriou94}, which can be deduced from the explicit reciprocity law of Colmez \cite{colmez98}). But Theorem~\ref{pollack} says that $\log_{p,k}^\pm\sim\log_p^{(k-1)/2}$. Hence, we in fact have 
\[\Omega_{V_f(1),1}((1+X)\otimes\eta^\pm)\sim\log_p^{(k-1)/2}.\]
Therefore, we can choose $c_\pm$ such that
\[
\exp_{n,1}\left(\gamma_{n,1}(\eta^\pm_1)\right)\notin \varpi^{-e(k-1)\lfloor n/2\rfloor+c_{\pm}+1}H^1(\Qpn,T_{f}(1)),
\]
so $r_n^\pm=-e(k-1)\lfloor n/2\rfloor+c_{\pm}$, for $n$ sufficiently large.
\end{proof}

On combining these two lemmas, we have:

\begin{corollary} \label{trivialchar}\label{inteim}
If $\theta$ is the trivial character on $\Delta$, then there exist $s^\pm$ such that
\[\col^\pm\left(\HIw(T_{\bar{f}}(k-1))\right)^\theta=\varpi^{s_{\pm}}\Lambda_{\calO_E}(\Gamma).\]
\end{corollary}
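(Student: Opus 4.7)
The plan is to combine the rational surjectivity of $\col^\pm_n$ at each finite level (Proposition \ref{finite}) with the $\varpi$-adic integrality statements from Lemmas \ref{inte} and \ref{rn}, and then pass to the inverse limit.

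First, I would restrict Proposition \ref{finite} to the $\theta$-trivial isotypical component. In part (a), this gives surjectivity of $(\col^-_n)^\theta$ on $H^1(\Qpn,V_{\bar{f}}(k-1))^\theta$ onto $E[\Gamma_n]/(\omega_n^+(\gamma))$. In part (b), the additional constraint that $\sum_r c_{r,\sigma}$ be independent of $\sigma$ is automatic on $\Delta$-invariants, so we again get surjectivity of $(\col^+_n)^\theta$ onto $E[\Gamma_n]/(\omega_n^-(\gamma))$.

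Next, I would use Lemma \ref{inte} together with Remark \ref{omega} to pass to the integral image on $H^1(\Qpn,T_{\bar{f}}(k-1))^\theta$. Modulo $(\gamma^{p^{n-1}}-1)$ one has $\log_{p,k}^\pm = p^{1-k}\lambda_\pm\prod_{j=0}^{k-2}\omega_n^\pm(u^{-j}\gamma)$; for $j>0$ the factors $\omega_n^\pm(u^{-j}\gamma)$ are units in $E[\Gamma_n]$ (being coprime to $\gamma^{p^{n-1}}-1$), while only the $j=0$ factor $\omega_n^\pm(\gamma)$ is a zero divisor. Tracking the $\varpi$-adic valuations of these units, together with the growth $r_n^\pm = -e(k-1)\lfloor n/2\rfloor + c_\pm$ from Lemma \ref{rn}, shows that the integral image on $H^1(\Qpn,T_{\bar{f}}(k-1))^\theta$, reduced modulo $\omega_n^\mp(\gamma)$, is exactly $\varpi^{s_\pm}\OO_E[\Gamma_n]/(\omega_n^\mp(\gamma))$ for a constant $s_\pm$ independent of $n$ (for $n$ sufficiently large).

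Finally, I would pass to the inverse limit. By Corollary \ref{proj} every element of $H^1(\Qpn,T_{\bar{f}}(k-1))$ lifts to $\HIw(T_{\bar{f}}(k-1))$, and the corestriction compatibilities of the Coleman maps give surjective transition maps between the finite-level integral images, so the Mittag-Leffler condition holds. Since the ideals $(\omega_n^\mp(\gamma))$ in $\Lambda_{\calO_E}(\Gamma)$ become cofinal with the augmentation-type filtration (the cyclotomic factors $\Phi_m$ occurring in $\omega_n^\mp$ are of unbounded degree as $n\to\infty$), we conclude
\[
\col^\pm(\HIw(T_{\bar{f}}(k-1)))^\theta = \varprojlim_n \varpi^{s_\pm}\OO_E[\Gamma_n]/(\omega_n^\mp(\gamma)) = \varpi^{s_\pm}\Lambda_{\calO_E}(\Gamma).
\]
The main obstacle is the second stage, where one must track the $\varpi$-adic cancellations precisely to obtain a level-independent $s_\pm$; this is a consequence of Lemma \ref{rn}, itself relying on the $\delta(V)$-conjecture, i.e. Colmez's explicit reciprocity law.
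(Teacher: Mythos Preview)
Your proposal is correct and follows essentially the same route as the paper's own proof: restrict Proposition~\ref{finite} to the $\Delta$-invariant part, combine with Lemma~\ref{inte} and Remark~\ref{omega} to obtain the integral image of $\LL_{\eta^\pm,n}$ as $\varpi^{r_n^\pm}\OO_E[G_n]\cap U_n$, use Lemma~\ref{rn} so that the powers of $p$ coming from $\log_{p,k}^\pm\equiv p^{1-k}\lambda_\pm\prod_j\omega_n^\pm(u^{-j}\gamma)$ cancel the growth of $r_n^\pm$ to a constant $s^\pm$, and then pass to the inverse limit over the quotients by $\tilde\omega_n^\mp(\gamma)$. The only cosmetic difference is that the paper phrases the finite-level step in terms of $\tilde\omega_n^\pm$ (the product of cyclotomic polynomials without the $1/p$ factors) rather than $\omega_n^\pm$, which slightly reorganises the bookkeeping of $p$-powers, and it does not name the Mittag--Leffler condition explicitly but simply invokes $\varprojlim_n\Lambda_{\OO_E}(G_\infty)/\tilde\omega_n^\mp(\gamma)=\Lambda_{\OO_E}(G_\infty)$.
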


\begin{proof}
By Proposition~\ref{finite} and Lemma~\ref{inte}, for sufficiently large $n$,
\[
\varpi^{r_n^\pm}\left(\sum_{\sigma\in\Delta}\sigma\right)\prod_{j=0}^{k-2}\tilde{\omega}_n^\pm(u^{-j}\gamma) \in\LL_{\eta^\pm,n}\left(H^1(\Qpn,T_{\bar{f}}(k-1))\right)
\] 
where
\begin{eqnarray*}
\tilde{\omega}_n^+(1+X)&=&\prod_{1\le m<n/2}\Phi_{2m}(1+X),\\
\tilde{\omega}_n^-(1+X)&=&\prod_{1\le m<(n+1)/2}\Phi_{2m-1}(1+X).
\end{eqnarray*}
Hence, by Remark~\ref{omega} and Lemma~\ref{rn}, there exist constants $s^\pm$ (independent of $n$) such that
\[
\varpi^{s^\pm}\left(\sum_{\sigma\in\Delta}\sigma\right)\log_{p,k}^\pm \in\LL_{\eta^\pm,n}\left(H^1(\Qpn,T_{\bar{f}}(k-1))\right)
\]
and 
\[
\LL_{\eta^\pm,n}\left(H^1(\Qpn,T_{\bar{f}}(k-1))\right)\subset \varpi^{s^\pm}\log_{p,k}^\pm\calO_E[G_n].
\]
But $\log_{p,k}^\pm\col^\pm=\LL_{\eta^\pm}$, so we have
\[
\varpi^{s^\pm}\sum_{\sigma\in\Delta}\sigma\in\col^\pm\left(H^1(\Qpn,T_{\bar{f}}(k-1)\right)\mod\tilde{\omega}_n^\mp(\gamma).
\]
Therefore, we are done since 
\[\lim_{\leftarrow}\Lambda_{\calO_E}(G_\infty)/\tilde{\omega}_n^\pm(\gamma)=\Lambda_{\calO_E}(G_\infty)\quad\text{and}\quad\Lambda_{\calO_E}(G_\infty)^\theta=\left(\sum_{\sigma\in\Delta}\sigma\right)\Lambda_{\calO_E}(G_\infty).
\]
\end{proof}

\begin{remark}\label{generalchar}
It is clear that we can replace $\theta$ by an arbitrary character on $\Delta$ for the minus map in the corollary.
\end{remark}


\section{$\pm$-Selmer groups}\label{selmer}

Throughout this section, with the exception of Sections~\ref{itiscotorsion} and \ref{myconj}, assumptions (1) and (2) is not necessary.

Let $f$ be a modular form as in Section~\ref{modularforms}, $K$ a number field, the $p$-Selmer groups of $f$ over $K$ are defined by the following:  
\begin{eqnarray*}
\Sel^0_p(f/K)&=&\ker\left(H^1(K,V_f/T_f(1))\rightarrow\prod_v H^1(K_v,V_f/T_f(1))\right)\\
\Sel_p(f/K)&=&\ker\left(H^1(K,V_f/T_f(1))\rightarrow\prod_v\frac{H^1(K_v,V_f/T_f(1))}{H^1_f(K_v,V_f/T_f(1))}\right)
\end{eqnarray*}
where $v$ runs through the places of $K$.

We write $k_n$ for $\QQ$ adjoining all the $p^n$th roots of unity and $k_\infty=\cup k_n$. Since there is a unique place above $p$ in $k_n$, we write this place as $p$ as well. Note that the completion of $k_n$ at $p$ is isomorphic to $\Qpn$. For $f$ satisfying assumptions (1) and (2), let $H^1_f(\Qpn,V_f/T_f(1))^\pm$ be as defined in Section~\ref{PD}. For all $n\ge0$, we define the plus and minus Selmer groups by
\[
\Sel_p^\pm(f/k_n)=\ker\left(\Sel_p(f/k_n)\rightarrow\frac{H^1(\Qpn,V_f/T_f(1))}{H^1_f(\Qpn,V_f/T_f(1))^\pm}\right).
\]
In this section, we show that $\Sel_p(f/k_\infty)$ is not $\Lambda_{\calO_E}(G_\infty)$-cotorsion when $f$ is supersingular at $p$. When $f$ satisfies assumptions (1) and (2), we  show that $\Sel_p^\pm(f/k_\infty)=\displaystyle\lim_{\rightarrow}\Sel_p^\pm(f/k_n)$ is $\Lambda_{\calO_E}(G_\infty)$-cotorsion. 


\subsection{Restricted ramification}

We now describe the Selmer groups defined above using restricted ramification. Let $S$ be a finite set of places of a number field $K$ containing all infinite places, all primes above $p$ and those dividing $N$. Then, by \cite[Lemma~I.5.3]{rubin00},
\begin{equation}\label{restrict}
H^1(G_{S,K},V_f/T_f(1))=\ker\left(H^1(K,V_f/T_f(1))\rightarrow\prod_{v\notin S}\frac{H^1(K_v,V_f/T_f(1))}{H^1_f(K_v,V_f/T_f(1))}\right)
\end{equation}
where $G_{S,K}$ is the Galois group of the maximal extension of $K$ unramified outside $S$. Therefore, we can rewrite $\Sel_p$ as
\begin{equation}\label{sim}
\Sel_p(f/K)=\ker\left(H^1(G_{S,K},V_f/T_f(1))\rightarrow\bigoplus_{v\in S} \frac{H^1(K_v,V_f/T_f(1))}{H^1_f(K_v,V_f/T_f(1))}\right).
\end{equation}

If $f$ satisfies assumptions (1) and (2), we write $H^1_f(k_{n,v},V_f/T_f(1))^\pm=H^1_f(k_{n,v},V_f/T_f(1))$ for $v\nmid p$. Then,
\begin{equation}
\label{simpm}\Sel_p^\pm(f/k_n)=\ker\left(H^1(G_{S,k_n},V_f/T_f(1))\rightarrow\bigoplus_{v\in S}\frac{H^1(k_{n,v},V_f/T_f(1))}{H^1_f(k_{n,v},V_f/T_f(1))^\pm}\right).
\end{equation}

The next lemma enables us to give a similar alternative description of $\Sel_p^0$ as well.

\begin{lemma}\label{trivial}
With notation above, we have $H^1_f(K_v,V_f/T_f(1))=0$ for $v\nmid pN$.
\end{lemma}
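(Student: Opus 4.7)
My strategy uses the Bloch--Kato definition of $H^1_f$ together with Deligne's Weil bound to reduce the statement to purity.

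First I will observe that the Galois representation $V_f$ constructed by Deligne is unramified outside $pN$, so for any $v\nmid pN$ both $V_f(1)$ and $V_f/T_f(1)$ are unramified as $G_{K_v}$-modules. Since $v\nmid p$, the Bloch--Kato local condition at $v$ is just the unramified cohomology, so
\[
H^1_f(K_v,V_f(1))=H^1_{\rm ur}(K_v,V_f(1))\cong V_f(1)\big/(\Fr_v-1)V_f(1),
\]
and this vanishes precisely when $1$ is not an eigenvalue of $\Fr_v$ on $V_f(1)$.

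Next I will rule out $1$ as a Frobenius eigenvalue by purity. The paper's conventions (the HT weights $\{0,1-k\}$ for $V_f$, together with the compatibility $V_f(1)=V_p(\E)$ in the weight-$2$ elliptic-curve case) make $V_f(1)$ pure of weight $k-1$, so by Deligne's theorem the eigenvalues of $\Fr_v$ on $V_f(1)$ at $v\nmid pN$ have absolute value $v^{(k-1)/2}$. Since $v\ge2$ and $k\ge2$ this is strictly greater than $1$, so $\Fr_v-1$ acts invertibly on $V_f(1)$ and $H^1_f(K_v,V_f(1))=0$.

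Finally, the short exact sequence $0\to T_f(1)\to V_f(1)\to V_f/T_f(1)\to 0$ exhibits $H^1_f(K_v,V_f/T_f(1))$ as the image of $H^1_f(K_v,V_f(1))$ under the natural map $H^1(K_v,V_f(1))\to H^1(K_v,V_f/T_f(1))$; since the source vanishes, so does the target, proving the lemma. The one piece of bookkeeping to be careful about is pinning down the Tate twist so that the Weil bound is applied with the correct weight $k-1$ (which is where the ``$(1)$'' in $T_f(1)$ is doing real work — without this twist one would be in the borderline range where eigenvalues can be roots of unity); once that is done the estimate is immediate.
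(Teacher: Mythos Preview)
Your argument is essentially identical to the paper's: both identify $H^1_f$ with $H^1_{\rm ur}$ at places away from $p$, compute the latter as the Frobenius coinvariants $V_f(1)^I/(\Fr-1)V_f(1)^I$, invoke Deligne's purity bound to rule out $1$ as an eigenvalue, and then pass to $V_f/T_f(1)$ by taking the image. The one step the paper includes that you omit is the archimedean case, where it simply notes that $H^1(K_v,V_f/T_f(1))=0$ outright since $p$ is odd.
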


\begin{proof}

If $v$ is an infinite place, we in fact have $H^1(K_v,V_f/T_f(1))=0$ as $p$ is odd (see e.g. \cite[Section~I.3.7]{rubin00}).

We now assume that $v$ is a finite place not dividing $pN$. Since $v\nmid p$,
\[
H^1_f(K_v,V_f(1))=H^1_\text{ur}(K_v,V_f(1))
\]
by definition and $H^1_f(K_v,V_f/T_f(1))$ is defined to be the image of $H^1_{\text{ur}}(K_v,V_f(1))$ in $H^1(K_v,V_f/T_f(1))$ under the natural map 
$H^1(K_v,V_f(1))\rightarrow H^1(K_v,V_f/T_f(1))$. By \cite[Section~I.3.2]{rubin00}, 
\[
H^1_\text{ur}(K_v,V_f(1))\cong V_f(1)^I/(\Fr-1)V_f(1)^I
\]
where $I$ is the inertia group of $K_v$ and  $\Fr$ is the Frobenius of $K_v^\text{ur}/K_v$. Hence, it suffices to show that $1$ is not an eigenvalue of $\Fr$. But $v$ is a good prime (i.e. $v\nmid N$), so the eigenvalues have absolute value $q_v^{(k-1)/2}$ where $q_v$ is the rational prime lying below $v$. Hence we are done.
\end{proof}

If $S$ is as above, Lemma~\ref{trivial} and (\ref{restrict}) implies that
\[
H^1(G_{S,K},V_f/T_f(1))=\ker\left(H^1(K,V_f/T_f(1))\rightarrow\prod_{v\notin S}H^1(K_v,V_f/T_f(1))\right).
\]
Therefore, by the definition of $\Sel_p^0$, we have:
\begin{equation}\label{sel0}
\Sel_p^0(f/K)=\ker\left(H^1(G_{S,K},V_f/T_f(1))\rightarrow\bigoplus_{v\in S}H^1(K_v,V_f/T_f(1))\right).
\end{equation}
As stated in the proof of Lemma~\ref{trivial}, $H^1(K_v,V_f/T_f(1))=0$ if $v$ is an infinite place. We can therefore simplify \eqref{sel0} further:
\begin{equation}\label{sel000}
\Sel_p^0(f/K)=\ker\left(H^1(G_{S,K},V_f/T_f(1))\rightarrow\bigoplus_{v\in S_f}H^1(K_v,V_f/T_f(1))\right).
\end{equation}
where $S_f$ denotes the set of finite places in $S$.


\subsection{Poitou-Tate exact sequences}
We now briefly review results on Poitou-Tate exact sequences. Details can be found in \cite[Section~A.3]{perrinriou95}.

With the above notation, let $S$ be a finite set of places of $K$ containing those above $p$ and the infinite places, then we have an exact sequence
\begin{equation}\label{pt1}
\bigoplus_{v\in S_f}H^0(K_v,V_f/T_f(1))\rightarrow H^2(G_{S,K},T_{\bar{f}}(k-1))^\vee\rightarrow H^1(G_{S,K},V_f/T_f(1))\rightarrow\bigoplus_{v\in S_f}H^1(K_v,V_f/T_f(1))
\end{equation}
where $S_f$ is again the set of finite places in $S$. On combining \eqref{pt1} and \eqref{sel000}, we have
\[
\bigoplus_{v\in S_f}H^0(K_v,V_f/T_f(1))\rightarrow H^2(G_{S,K},T_{\bar{f}}(k-1))^\vee\rightarrow\Sel_p^0(f/K).
\]
By taking duals and the fact that $H^0(K_v,V_f/T_f(1))^\vee=H^2(K_v,T_{\bar{f}}(k-1))$, we obtain
\begin{equation}\label{sel00}
\Sel_p^0(f/K)^\vee=\ker\left(H^2(G_{S,K},T_{\bar{f}}(k-1))\rightarrow\bigoplus_{v\in S_f}H^2(K_v,T_{\bar{f}}(k-1))\right)
\end{equation}

For each $v\in S_f$, let $A_v\subset H^1(K_v,T_{\bar{f}}(k-1))$ and $B_v\subset H^1(K_v,V_f/T_f(1))$
be $\calO_E$-modules so that they are orthogonal complements to each other under the Pontryagin duality. Define
\[
H^1_B(K,V_f/T_f(1))=\ker\left(H^1(G_{S,K},V_f/T_f(1))\rightarrow\bigoplus_{v\in S_f}\frac{H^1(K_v,V_f/T_f(1))}{B_v}\right).
\]
Then \cite[Proposition~A.3.2]{perrinriou95} says that we have an exact sequence
\begin{equation}\label{pt2}
\begin{split}
H^1(G_{S,K},T_{\bar{f}}(k-1))\rightarrow\bigoplus_{v\in S_f}\frac{H^1(K_v,T_{\bar{f}}(k-1))}{A_v}\rightarrow H^1_B(K,V_f/T_f(1))^\vee\\\rightarrow H^2(G_{S,K},T_{\bar{f}}(k-1))\rightarrow\bigoplus_{v\in S_f}H^2(K_v,T_{\bar{f}}(k-1)).
\end{split}
\end{equation}
Hence, we can combine (\ref{sel00}) and (\ref{pt2}) to obtain the following exact sequence:
\begin{equation}\label{pt3}
H^1(G_{S,K},T_{\bar{f}}(k-1))\rightarrow\bigoplus_{v\in S_f}\frac{H^1(K_v,T_{\bar{f}}(k-1))}{A_v}\rightarrow H^1_B(K,V_f/T_f(1))^\vee \rightarrow\Sel_p^0(f/K)^\vee\rightarrow0.
\end{equation}


\subsection{Cotorsionness}

\subsubsection{$\Sel_p(f/k_\infty)$ is not $\Lambda_{\calO_E}(G_\infty)$-cotorsion}\label{nottorsion}
We now prove our claim about $\Sel_p(f/k_\infty)^\vee$ in the introduction. Let $K=k_n$.  Take $B_v=H^1_f(k_{n,v},V_f/T_f(1))$ for $v\in S_f$ in (\ref{pt3}), then $A_v=H^1_f(k_{n,v},T_{\bar{f}}(k-1))$ by \cite[Proposition~3.8]{blochkato}. Hence, on combining \eqref{sim} and \eqref{pt3}, we have an exact sequence
\begin{equation}\label{prelimit}\begin{split}
H^1(G_{S,k_n},T_{\bar{f}}(k-1))\rightarrow\frac{H^1(\Qpn,T_{\bar{f}}(k-1))}{H^1_f(\Qpn,T_{\bar{f}}(k-1))}\oplus\bigoplus_{v|N}\frac{H^1(k_{n,v},T_{\bar{f}}(k-1))}{H^1_f(k_{n,v},T_{\bar{f}}(k-1))}\\\rightarrow\Sel_p(f/k_n)^\vee\rightarrow\Sel_p^0(f/k_n)^\vee\rightarrow0.\end{split}
\end{equation}

We are interested in taking inverse limit over $n$. For the terms coming from places dividing $N$, we can apply the following.

\begin{lemma}\label{vanish}
For each integer $n\ge0$, fix a prime $v(n)$ of $\Qpn$ not dividing $p$ such that $v(n+1)$ lies above $v(n)$, then 
\[\lim_{\underset{n,\cor}{\leftarrow}}\frac{H^1(k_{n,v(n)},T_{\bar{f}}(k-1))}{H^1_f(k_{n,v(n)},T_{\bar{f}}(k-1))}=0.\]
\end{lemma}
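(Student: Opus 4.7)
The plan is to prove the stronger statement that each individual term $H^1(K_n,T_{\bar f}(k-1))/H^1_f(K_n,T_{\bar f}(k-1))$ is already zero, where I abbreviate $K_n := k_{n,v(n)}$ and let $q_v$ denote the residue characteristic of $v(n)$; the lemma then follows trivially.

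First I would apply local Tate duality together with the Bloch--Kato orthogonality of the subgroups $H^1_f$ on either side of the duality pairing (the analogue at $v(n)$ of \eqref{pair}). This identifies the Pontryagin dual of $H^1(K_n,T_{\bar f}(k-1))/H^1_f(K_n,T_{\bar f}(k-1))$ with $H^1_f(K_n,V_f/T_f(1))$. Since $v(n)\nmid p$, the Bloch--Kato definition at places not above $p$ expresses $H^1_f(K_n,V_f/T_f(1))$ as the image of $H^1_{\mathrm{ur}}(K_n,V_f(1))$ in $H^1(K_n,V_f/T_f(1))$, so the problem reduces to showing $H^1_{\mathrm{ur}}(K_n,V_f(1))=0$.

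Second, since $k_n/\QQ$ is unramified at $v(n)$, the completion $K_n$ is an unramified extension of $\QQ_{q_v}$ of some degree $d_n$; hence $I_{K_n}=I_{\QQ_{q_v}}$ and the Frobenius of $K_n$ is $\Fr^{d_n}$. Inflation--restriction for the unramified extension $K_n^{\mathrm{ur}}/K_n$ then yields
\[
H^1_{\mathrm{ur}}(K_n,V_f(1))\;\cong\;V_f(1)^{I_{\QQ_{q_v}}}\big/(\Fr^{d_n}-1)V_f(1)^{I_{\QQ_{q_v}}},
\]
and it remains to show that no eigenvalue of $\Fr$ on $V_f(1)^{I_{\QQ_{q_v}}}$ is a root of unity.

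Finally I would invoke local--global compatibility for modular forms (Carayol) together with Deligne's proof of the Ramanujan--Petersson bound: when $V_f^{I_{\QQ_{q_v}}}\neq 0$, the eigenvalues of $\Fr$ on that space are Weil numbers of weight $k-1$, or $k-2$ at Steinberg primes, so have complex absolute value at least $q_v^{(k-2)/2}\geq 1$ since $k\geq 2$. After Tate twisting, the eigenvalues of $\Fr$ on $V_f(1)^{I_{\QQ_{q_v}}}$ have complex absolute value at least $q_v^{k/2}\geq q_v\geq 2>1$, so they cannot be roots of unity, and $\Fr^{d_n}-1$ is invertible on $V_f(1)^{I_{\QQ_{q_v}}}$. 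This gives the required vanishing. The main technical input is this Weil-number bound at primes $v(n)\mid N$ of bad reduction; it is classical, but without it one would be forced into a more delicate Iwasawa-limit argument exploiting the norm structure of the tower.
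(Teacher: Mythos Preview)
Your reduction via local Tate duality to the vanishing of $H^1_f(K_n,V_f/T_f(1))$, and hence of $H^1_{\mathrm{ur}}(K_n,V_f(1))$, is correct and is exactly how the paper handles the case $v(n)\nmid N$ (this is Lemma~\ref{trivial}). The gap is in your treatment of primes $v(n)\mid N$, where you have the effect of the Tate twist backwards.

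With the paper's normalisation (Hodge--Tate weights $0$ and $1-k$, so that geometric Frobenius at a good prime $q$ has characteristic polynomial $X^2-a_qX+\epsilon(q)q^{k-1}$), twisting by $\Qp(1)$ multiplies the geometric Frobenius eigenvalues by $q_v^{-1}$, not by $q_v$. Thus if the eigenvalue of $\Fr$ on $V_f^{I}$ has absolute value $q_v^{(k-2)/2}$ at a Steinberg prime, the eigenvalue on $V_f(1)^{I}$ has absolute value $q_v^{(k-4)/2}$, not $q_v^{k/2}$. For $k=4$ this is $1$; concretely, at a Steinberg prime $q\,\|\,N$ with trivial nebentypus at $q$ one has $a_q=\pm q$, and the geometric Frobenius on the one-dimensional space $V_f(1)^{I}$ acts by $a_q\cdot q^{-1}=\pm1$. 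Then $\Fr^{d_n}-1$ is the zero map for all even $d_n$ (and for all $d_n$ if the sign is $+$), so
\[
H^1_{\mathrm{ur}}(K_n,V_f(1))\;\cong\;V_f(1)^{I}\;\cong\;E\;\neq\;0,
\]
and its image $H^1_f(K_n,V_f/T_f(1))$ is a nonzero divisible group. Hence the individual terms do \emph{not} vanish in general, and your stronger statement is false.

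This is precisely why the paper does not attempt a termwise argument at bad primes: it instead invokes \cite[Section~17.10]{kato04}, where the vanishing of the \emph{inverse limit} is obtained from the fact that for $v\nmid p$ the local extension $\bigcup_n k_{n,v(n)}$ contains the unramified $\Zp$-extension, forcing the relevant $p$-cohomological dimension to drop. That argument genuinely exploits the tower and cannot be replaced by a pointwise Weil-number bound.
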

\begin{proof}The Pontryagin dual of the said inverse limit is $\displaystyle\lim_{\rightarrow}H^1_f(k_{n,v(n)},V_{f}/T_f(1))$, so the result follows immediately from Lemma~\ref{trivial} if $v(n)\nmid N$. The general case is proved in \cite[Section 17.10]{kato04} by considering $p$-cohomological dimensions.
\end{proof}

Therefore, on taking inverse limits in \eqref{prelimit}, we have the following exact sequence:
\begin{equation}\label{pt5}
\Hh^1_S(T_{\bar{f}}(k-1))\rightarrow\frac{\HIw(T_{\bar{f}}(k-1))}{\Hh_f(T_{\bar{f}}(k-1))}\rightarrow\Sel_p(f/k_\infty)^\vee\rightarrow\Sel_p^0(f/k_\infty)^\vee\rightarrow0
\end{equation}where $\displaystyle\Hh_f(\cdot)=\lim_{\stackrel{\longleftarrow}{n}}H^1_f(\QQ_{p,n},\cdot)$ and $\displaystyle\Hh^1_S(\cdot)=\lim_{\stackrel{\longleftarrow}{n}}H^1(G_{k_n,S},\cdot)\cong\Hh^1(\cdot)$ (see \cite[Proposition~7.1]{kobayashi03}).

\begin{proposition}
$\Sel_p(f/k_\infty)^\vee$ is not torsion over $\Lambda_{\calO_E}(G_\infty)$.
\end{proposition}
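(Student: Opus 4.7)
The plan is to feed the exact sequence \eqref{pt5} through a rank computation over $\Lambda=\Lambda_{\calO_E}(G_\infty)$. Three of the four $\Lambda$-ranks are immediate: $\HIw(T_{\bar f}(k-1))$ has rank~$2$ by the local Tate Euler characteristic (using $H^0=H^2=0$, which follows from Lemma~\ref{inv}-type arguments since the Frobenius eigenvalues are non-trivial); $\Hh^1_S(T_{\bar f}(k-1))\cong\Hh^1(T_{\bar f}(k-1))$ has rank~$1$ by Theorem~\ref{katoglobal}(b) applied to $\bar f$ and transported by the twist $\Tw_{k-1}$; and $\Sel_p^0(f/k_\infty)^\vee$ is $\Lambda$-torsion since it embeds in $\Hh^2(T_{\bar f}(k-1))$ via~\eqref{sel00}, and the latter is torsion by Theorem~\ref{katoglobal}(a) (twisted).

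The crux is to show that in the supersingular case ($p\mid a_p$) the module $\Hh_f(T_{\bar f}(k-1))$ is itself $\Lambda_{\calO_E}(G_\infty)$-torsion. Granted this, $\HIw/\Hh_f$ has rank~$2$, the image of $\Hh^1_S$ inside it has rank at most~$1$, and the four-term exact sequence~\eqref{pt5} forces
\[
\mathrm{rank}_\Lambda\Sel_p(f/k_\infty)^\vee \ \geq\ 2-1-0\ =\ 1,
\]
which is exactly the non-torsion statement claimed in the proposition.

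The torsion-ness of $\Hh_f(T_{\bar f}(k-1))$ is the modular-form analogue of the classical fact that the universal-norm subgroup of a supersingular elliptic curve over the cyclotomic tower is trivial. The mechanism I would use is the Bloch-Kato isomorphism
\[
\exp_{n,k-1}\colon \Qpn\otimes \DD(V_{\bar f}(k-1))/\DD^0(V_{\bar f}(k-1)) \ \xrightarrow{\sim}\ H^1_f(\Qpn,V_{\bar f}(k-1))
\]
combined with the fact that, via Perrin-Riou's $\Xi_{n,V_{\bar f}(k-1)}$, the corestriction maps are compatible with the trace on $\Qpn$ only after introducing the operator $(p\otimes\vp)^{-n}$ on the Dieudonn\'e module. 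In the supersingular case the eigenvalues of $\vp$ on $\DD(V_{\bar f})$ have $p$-valuation $(k-1)/2>0$, so this operator produces denominators whose $p$-valuation grows linearly with $n$; consequently no nontrivial norm-compatible system of integral classes in $H^1_f(\Qpn,T_{\bar f}(k-1))$ can survive in the inverse limit, and $\Hh_f(T_{\bar f}(k-1))$ is annihilated by arbitrarily large powers of $\varpi$. The main obstacle in this plan is making the ``shrinking integral lattice'' argument precise for arbitrary weight $k\ge 2$: it requires careful bookkeeping of how the growth bound in Remark~\ref{growthrate} (namely $\log_p^{(k-1)/2}$) interacts with the integral structure on $\HIw(T_{\bar f}(k-1))$, which is the genuinely supersingular ingredient that fails in the ordinary case (where the unit-root splitting gives a norm-compatible integral lift and $\Hh_f$ has full rank~$1$).
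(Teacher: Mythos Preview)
Your overall strategy is exactly the paper's: feed the rank computation through the exact sequence \eqref{pt5}, using that $\Hh^1_S$ has rank~$1$ (Kato), $\HIw$ has rank~$2$ (Perrin-Riou / local Euler characteristic), and $\Sel_p^0(f/k_\infty)^\vee$ is torsion.

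The only substantive difference is in the treatment of $\Hh_f(T_{\bar f}(k-1))$. The paper simply invokes \cite[Theorem~0.6]{perrinriou00b}, which gives the stronger statement $\Hh_f(T_{\bar f}(k-1))=0$ outright. You instead sketch the mechanism behind that result: the denominators introduced by $(p\otimes\vp)^{-n}$ in the supersingular case force any norm-compatible integral system in $H^1_f$ to vanish. Your description of the idea is correct --- this is precisely how Perrin-Riou's argument goes, and it is the higher-weight analogue of the universal-norm triviality for supersingular elliptic curves --- but as you yourself note, turning it into a rigorous proof requires the growth bookkeeping you allude to. Since Perrin-Riou has already done this, citing her theorem (as the paper does) closes the gap cleanly and in fact gives more than you need (vanishing rather than mere torsion).
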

\begin{proof}
We consider the rank of each term appearing in \eqref{pt5}. By Theorem~\ref{katoglobal}, $\Hh^1_S(T_{\bar{f}}(k-1))$ is a torsion-free $\Lambda_{\calO_E}(G_\infty)$-module of rank 1. By \cite[Theorem~0.6]{perrinriou00b}, $\Hh_f(T_{\bar{f}}(k-1))=0$. By \cite[Proposition~3.2.1]{perrinriou94}, $\HIw(T_{\bar{f}}(k-1))$ is of rank 2 over $\Lambda_{\calO_E}(G_\infty)$. By \cite[proof of Proposition~7.1]{kobayashi03}, which is a purely algebraic proof and generalises to modular forms directly, $\Sel_p^0(f/k_\infty)^\vee$ is $\Lambda_{\calO_E}(G_\infty)$-torsion. Therefore, $\Sel_p(f/k_\infty)^\vee$ has $\Lambda_{\calO_E}(G_\infty)$-rank at least 1 and we are done.
\end{proof}


\subsubsection{$\Sel_p^\pm(f/k_\infty)$ is $\Lambda_{\calO_E}(G_\infty)$-cotorsion}\label{itiscotorsion}

We again set $K=k_n$. Let
\[
 B_v=
     \left\{
     \begin{array}{ll}
      H^1_f(k_{n,v},V_f/T_f(1))       & \text{if $v|N$}\\
     H^1(\Qpn,V_f/T_f(1))^\pm        & \text{if $v=p$.}  
     \end{array}\right.
\]
By \cite[Proposition~3.8]{blochkato} and Lemma~\ref{ann}, we have
\[
 A_v=
     \left\{
     \begin{array}{ll}
      H^1_f(k_{n,v},T_{\bar{f}}(k-1))       & \text{if $v|N$}\\
     H^1_\pm(\Qpn,T_{\bar{f}}(k-1))       & \text{if $v=p$.}  
     \end{array}\right.
\]
Hence, on combining (\ref{simpm}) with (\ref{pt3}), we obtain the following exact sequence:
\begin{equation}\label{pt6}
\begin{split}
H^1(G_{S,k_n},T_{\bar{f}}(k-1))\rightarrow\frac{H^1(\Qpn,T_{\bar{f}}(k-1))}{H^1_\pm(\Qpn,T_{\bar{f}}(k-1))}\oplus\bigoplus_{v|N}\frac{H^1(k_{n,v},T_{\bar{f}}(k-1))}{H^1_f(k_{n,v},T_{\bar{f}}(k-1))}\\\rightarrow\Sel_p^\pm(f/k_n)^\vee\rightarrow\Sel_p^0(f/k_n)^\vee\rightarrow0.
\end{split}
\end{equation}
Therefore, on taking inverse limits in \eqref{pt6} and applying Lemma~\ref{vanish}, we have the exact sequence
\begin{equation}\label{pt4}
\Hh^1_S(T_{\bar{f}}(k-1))\rightarrow\frac{\HIw(T_{\bar{f}}(k-1))}{\Hpm(T_{\bar{f}}(k-1))}\rightarrow\Sel_p^\pm(f/k_\infty)^\vee\rightarrow\Sel_p^0(f/k_\infty)^\vee\rightarrow0
\end{equation}
where $\Hpm(T_{\bar{f}}(k-1))$ is as defined in Section~\ref{kernel}, i.e. $\displaystyle\lim_{\leftarrow}H^1_\pm(\Qpn,T_{\bar{f}}(k-1))$.

\begin{proposition}
$\Sel_p^\pm(f/k_\infty)$ is $\Lambda_{\calO_E}(G_\infty)$-cotorsion.
\end{proposition}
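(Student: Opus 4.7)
The plan is to read off everything from the four-term exact sequence \eqref{pt4}
\[
\Hh^1_S(T_{\bar{f}}(k-1))\xrightarrow{\ \iota\ }\frac{\HIw(T_{\bar{f}}(k-1))}{\Hpm(T_{\bar{f}}(k-1))}\longrightarrow\Sel_p^\pm(f/k_\infty)^\vee\longrightarrow\Sel_p^0(f/k_\infty)^\vee\longrightarrow0
\]
and check that both the cokernel of $\iota$ and the rightmost term are $\Lambda_{\calO_E}(G_\infty)$-torsion. Granted that, the middle term $\Sel_p^\pm(f/k_\infty)^\vee$ sits in an extension of two torsion modules and is therefore itself torsion.

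For $\Sel_p^0(f/k_\infty)^\vee$, I would invoke exactly the same argument that was used in the paragraph just above to handle $\Sel_p(f/k_\infty)^\vee$: the purely algebraic proof of \cite[Proposition~7.1]{kobayashi03} carries over verbatim to modular forms of any weight and gives that $\Sel_p^0(f/k_\infty)^\vee$ is $\Lambda_{\calO_E}(G_\infty)$-torsion. So that step is essentially free.

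The main point is the cokernel of $\iota$. Here the Coleman map does the work. By the very definition in Section~\ref{kernel}, $\Hpm(T_{\bar{f}}(k-1))$ is the kernel of $\col^\pm$, so $\col^\pm$ descends to an injection
\[
\overline{\col^\pm}:\frac{\HIw(T_{\bar{f}}(k-1))}{\Hpm(T_{\bar{f}}(k-1))}\hookrightarrow\Lambda_E(G_\infty),
\]
which shows that the middle term of \eqref{pt4} has $\Lambda_{\calO_E}(G_\infty)$-rank at most one. On the other hand, by Theorem~\ref{katoglobal}(b) the module $\Hh^1_S(T_{\bar{f}}(k-1))\cong\Hh^1(T_{\bar{f}}(k-1))$ has rank one. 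Thus it suffices to produce a single element in the image of $\iota$ whose image in $\Lambda_E(G_\infty)$ under $\overline{\col^\pm}$ is nonzero: then the image of $\iota$ already has rank one, forcing its cokernel to be torsion. This element is the Kato zeta element $\kato$: combining the definition of $\col^\pm$ with Theorem~\ref{ka1} and the identities \eqref{pdef}--\eqref{mdef} gives $\col^\pm(\kato)=L_p^\pm$, and Theorem~\ref{pollack} (which records in particular $L_p^\pm\neq 0$) finishes the job.

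The only real obstacle is therefore the rank-one injectivity step, i.e.\ confirming that $\col^\pm$ genuinely kills $\Hpm(T_{\bar{f}}(k-1))$ and nothing more; but this is exactly how $\Hpm$ was defined in Section~\ref{kernel}, so no extra work is required beyond citing that definition together with the nonvanishing of $L_p^\pm$.
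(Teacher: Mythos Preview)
Your overall architecture is exactly the paper's: use the four-term sequence \eqref{pt4}, quote that $\Sel_p^0(f/k_\infty)^\vee$ is torsion, identify $\Hpm$ with $\ker(\col^\pm)$, and then reduce everything to the single assertion $L_p^\pm\neq 0$. The rank-counting you do (the middle term injects into $\Lambda_E(G_\infty)$, hence has rank $\le 1$; the image of $\iota$ contains $\kato$ whose Coleman image is $L_p^\pm$) is just a rephrasing of the paper's remark that the cokernel of $\iota$ is annihilated by $L_p^\pm$. So as a strategy there is nothing to criticise.

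There is, however, a genuine gap at the last step. Theorem~\ref{pollack} does \emph{not} record that $L_p^\pm\neq 0$: it only says $\log_{p,k}^\pm\sim\log_p^{(k-1)/2}$ and $L_p^\pm=O(1)$, i.e.\ boundedness of the coefficients. Nothing in Pollack's growth estimates prevents $L_p^\pm$ from being identically zero. The nonvanishing is an honest additional ingredient, and it is precisely what the paper isolates as Lemma~\ref{notzero}: one has to invoke nonvanishing results for twisted complex $L$-values (Rohrlich for $k=2$, Shimura for $k\ge 3$) to find infinitely many characters $\theta$ with $L(f,\theta,r+1)\neq 0$, and then use the interpolation formula for $L_p^\pm$ at those characters. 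Without this analytic input your argument does not close, because the entire torsion conclusion hinges on $L_p^\pm\neq 0$. Replace the appeal to Theorem~\ref{pollack} by the argument of Lemma~\ref{notzero} (or cite that lemma directly) and your proof is complete and essentially identical to the paper's.
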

\begin{proof}
Recall that $\ker(\col^\pm)=\Hpm(T_{\bar{f}}(k-1))$ from Section~\ref{kernel} and $\col^\pm(\kato)=L_{p}^\pm$ by \eqref{colemankato}. Therefore, the cokernel of the first map in \eqref{pt4} is killed by $L_p^\pm$. Therefore, if $L_p^\pm\ne0$, it would imply that the said cokernel is $\Lambda_{\calO_E}(G_\infty)$-torsion and the result would follow from the fact that $\Sel_p^0(f/k_\infty)^\vee$ is $\Lambda_{\calO_E}(G_\infty)$-torsion. Hence, we are done by the following lemma. 
\end{proof}

\begin{lemma}\label{notzero}
$L_p^\pm\ne0$.
\end{lemma}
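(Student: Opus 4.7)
The plan is to argue by contradiction, combining the interpolation formula of Theorem~\ref{padiclfunctions} with a nonvanishing theorem for twisted modular $L$-values. Since $\log_{p,k}^{\pm}$ is a nonzero element of $\HH_\infty(G_\infty)$, the definitions \eqref{pdef} and \eqref{mdef} show that $L_p^- = 0$ is equivalent to $L_{p,\alpha_1} = L_{p,\alpha_2}$, and $L_p^+ = 0$ is equivalent to $\alpha_2 L_{p,\alpha_1} = \alpha_1 L_{p,\alpha_2}$ inside $\HH_\infty(G_\infty)$.

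Under Assumption~(1) we have $a_p = 0$, so $\alpha_1 = -\alpha_2$ and hence $\alpha_1^{-n} = (-1)^n \alpha_2^{-n}$. Evaluating the interpolation formula
\[
\chi^r\theta(L_{p,\alpha_i}) = \frac{c_{n,r}\,\alpha_i^{-n}}{\tau(\theta)\,\Omega_{\pm}}\,L(f,\theta,r)
\]
at characters $\chi^r\theta$, where $0 \le r \le k-2$ and $\theta$ is a primitive character of conductor $p^n$ with $n \ge 1$, the assumption $L_p^- = 0$ would force
\[
(\alpha_1^{-n} - \alpha_2^{-n})L(f,\theta,r) = ((-1)^n - 1)\alpha_2^{-n}L(f,\theta,r) = 0,
\]
so $L(f,\theta,r) = 0$ for every such $\theta$ of odd conductor. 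A parallel calculation gives $\alpha_2\alpha_1^{-n} - \alpha_1\alpha_2^{-n} = ((-1)^n + 1)\alpha_2^{1-n}$, which is nonzero for even $n$, so $L_p^+ = 0$ would likewise force $L(f,\theta,r) = 0$ for every $\theta$ of even conductor $p^n$ with $n \ge 1$ and every critical $r$.

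To close the argument, I would invoke Rohrlich's nonvanishing theorem for twists of modular $L$-functions by Dirichlet characters of $p$-power conductor: for any fixed critical integer $r$, one has $L(f,\theta,r) \ne 0$ for all but finitely many primitive $\theta$ of $p$-power conductor (the weight $2$ case is Rohrlich's \emph{Invent.~Math.} 75 (1984) paper, and the extension to general weight is in his subsequent work on $\mathrm{GL}_2$). Since this produces infinitely many nonvanishing twists with both odd and even conductor, each of the vanishing conclusions above leads to a contradiction, proving $L_p^{\pm} \ne 0$.

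The only real obstacle is to cite a sufficiently general form of the nonvanishing theorem; the rest of the argument is a direct manipulation of the interpolation formula using the single identity $\alpha_1 + \alpha_2 = 0$.
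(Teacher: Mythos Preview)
Your argument is correct and follows the same template as the paper's proof: evaluate at characters $\chi^r\theta$ of conductor $p^n$, use $\alpha_1=-\alpha_2$ to see that $L_p^+=0$ (resp.\ $L_p^-=0$) forces $L(f,\theta,r)=0$ for all $\theta$ of even (resp.\ odd) conductor, and then invoke a nonvanishing theorem to obtain a contradiction. The paper in fact works directly with Pollack's interpolation formulae for $L_p^\pm$ rather than going back through $L_{p,\alpha_i}$, but this is cosmetic.

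The one point where the paper is sharper concerns exactly the ``obstacle'' you flag. For $k=2$ the paper, like you, appeals to Rohrlich's theorem that $L(f,\theta,1)\ne0$ for all but finitely many $\theta$ of $p$-power conductor. For $k\ge3$, however, the paper avoids any deep nonvanishing input: it simply chooses a critical integer $r+1\le(k-1)/2$ and cites Shimura \cite[Proposition~2]{shimura76}, which gives $L(f,\theta,r+1)\ne0$ for \emph{every} such $\theta$ (essentially because one is away from the centre of the critical strip). So your appeal to a general-weight Rohrlich-type theorem works, but is stronger than necessary; for $k\ge3$ the elementary Shimura argument already closes the proof.
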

\begin{proof}
The case when $f$ corresponds to an elliptic curve is proved in \cite[Corollary~5.11]{pollack03}. The general case can be proved similarly.

By \cite{pollack03}, if $\theta$ is a character on $G_n$ which does not factor through $G_{n-1}$ and $0\le r\le k-2$,
\begin{align*}
\chi^r\theta(L_{p}^+)=C_{n,r}^+(\theta)L(f,\theta,r+1)&\quad\text{if $n$ is even},\\
\chi^r\theta(L_{p}^-)=C_{n,r}^-(\theta)L(f,\theta,r+1)&\quad\text{if $n$ is odd}
\end{align*}
where $C_{n,r}^\pm(\theta)$ are nonzero constants. By \cite{rohrlich88}, $L(f,\theta,1)=0$ for finitely many $\theta$ if $k=2$. If $k\ge3$, $L(f,\theta,r+1)\ne0$ for $r+1\le(k-1)/2$ by \cite[Proposition~2]{shimura76}. Hence we are done.\end{proof}

\begin{corollary}\label{injection}
The first map in \eqref{pt4} is injective.
\end{corollary}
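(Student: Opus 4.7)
An element $\mathbf{z} \in \Hh^1_S(T_{\bar{f}}(k-1)) = \Hh^1(T_{\bar{f}}(k-1))$ lies in the kernel of the first map in \eqref{pt4} if and only if its localization at $p$ lands in $\Hpm(T_{\bar{f}}(k-1))$. Since $\Hpm(T_{\bar{f}}(k-1)) = \ker(\col^\pm)$ by the discussion in Section~\ref{kernel}, the plan reduces to showing that $\col^\pm$ is injective on $\Hh^1(T_{\bar{f}}(k-1))$.

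I would argue one character $\theta$ of $\Delta$ at a time. By Theorem~\ref{katoglobal}(b), $\Hh^1(V_{\bar f})$ is a free $\Lambda_E(G_\infty)$-module of rank one; transporting this via $\Tw_{k-1}$, the $\theta$-component $\Hh^1(V_{\bar{f}}(k-1))^\theta$ is free of rank one over the integral domain $\Lambda_E(\Gamma)$, and the torsion-freeness of $\Hh^1(T_{\bar{f}})$ lets me regard $\Hh^1(T_{\bar{f}}(k-1))^\theta$ as a submodule. The restriction of $\col^\pm$ to this $\theta$-component is then a $\Lambda_E(\Gamma)$-linear map from a rank-one free module to $\Lambda_E(\Gamma)$, hence either zero or injective. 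By Theorem~\ref{ka1} together with \eqref{colemankato} this map sends $\kato^\theta$ to $L_p^{\pm,\theta}$, so injectivity on the $\theta$-component follows as soon as $L_p^{\pm,\theta} \ne 0$.

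The principal obstacle is this component-wise non-vanishing, which is a mild refinement of Lemma~\ref{notzero}. Given $\theta$, I would produce a finite-order character $\psi$ of some $G_n$ with $\psi|_\Delta = \theta$, not factoring through $G_{n-1}$, and with $n$ of the parity demanded by the sign ($n$ even for the $+$ case, $n$ odd for the $-$ case); such $\psi$ exist for every $\theta$ once one allows $n \ge 2$ of either parity. Substituting into the interpolation formula recalled in the proof of Lemma~\ref{notzero} yields
\[
\chi^r\psi(L_p^\pm) = C_{n,r}^\pm(\psi)\, L(f,\psi,r+1)
\]
with $C_{n,r}^\pm(\psi) \ne 0$. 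For $k \ge 3$, any $r$ with $r+1 \le (k-1)/2$ makes the $L$-value nonzero by \cite[Proposition~2]{shimura76}; for $k=2$, I take $r=0$ and use \cite{rohrlich88} to choose $\psi$ outside the finite exceptional set while still keeping $\psi|_\Delta = \theta$ by varying $\psi$ on $\Gamma$. The resulting nonzero specialization factors through the $\theta$-isotypic projection, so $L_p^{\pm,\theta} \ne 0$. Combined with the domain argument of the previous paragraph, this establishes injectivity in every $\theta$-component and hence in total.
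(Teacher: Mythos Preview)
Your proof is correct and follows the same route as the paper's: the paper's one-line justification ``It follows from Theorem~\ref{katoglobal} and Lemma~\ref{notzero}'' is precisely the combination of the rank-one/torsion-free structure of $\Hh^1$ with the non-vanishing of $L_p^\pm$ that you spell out. You have in fact been more careful than the paper, since the argument genuinely requires the \emph{component-wise} non-vanishing $L_p^{\pm,\theta}\ne 0$ (because $\Lambda_E(G_\infty)$ is not a domain), which the paper records only in the Remark immediately after the corollary rather than inside the proof itself; your derivation of this from the interpolation formula, Shimura's result, and Rohrlich's theorem is exactly the intended refinement.
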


\begin{proof}
It follows from Theorem~\ref{katoglobal} and Lemma~\ref{notzero}.
\end{proof}

\begin{remark}
It is clear from the proof of Lemma~\ref{notzero} that $L_p^{\pm,\theta}\ne0$ for any character $\theta$ on $\Delta$. Therefore, $\Sel_p^\pm(f/k_\infty)^\theta$ is $\Lambda_{\calO_E}(\Gamma)$-cotorsion and we can associate to it a characteristic ideal, namely $\Char_{\Lambda_{\calO_E}(\Gamma)}\left(\Sel_p^\pm(f/k_\infty)^{\vee,\theta}\right)$.
\end{remark}

\subsection{Main conjectures}\label{myconj}
We now formulate a main conjecture and relate it to that of Kato. By Corollary~\ref{injection} and the fact that  $\Sel_p^0(f/k_\infty)^\vee\cong\Hh^2(T_{\bar{f}}(k-1))$ (see \cite{kurihara02}), we have an exact sequence
\[
0\rightarrow\Hh^1_S(T_{\bar{f}}(k-1))\rightarrow\image(\col^\pm)\rightarrow\Sel_p^\pm(f/k_\infty)^\vee\rightarrow \Hh^2(T_{\bar{f}}(k-1))\rightarrow0.
\]
If $\theta$ is a character on $\Delta$, then 
\[
\Char_{\Lambda_{\calO_E}(\Gamma)}(\Hh^1_S(T_{\bar{f}}(k-1))^\theta/\ZZ(T_{\bar{f}}(k-1))^\theta)=\Char_{\Lambda_{\calO_E}(\Gamma)}(\Hh^2(T_{\bar{f}}(k-1))^\theta)
\]
if and only if
\[
\Char_{\Lambda_{\calO_E}(\Gamma)}(\Sel_p^\pm(f/k_\infty)^{\vee,\theta})=\Char_{\Lambda_{\calO_E}(\Gamma)}(\image(\col^{\pm,\theta})/L_{p}^{\pm,\theta}).
\]
In other words, Kato's main conjecture (for $\bar{f}$) is equivalent to the following conjecture.
\begin{conjecture}\label{ourmc}$\Char_{\Lambda_{\calO_E}(\Gamma)}(\Sel_p^\pm(f/k_\infty)^{\vee,\theta})=\Char_{\Lambda_{\calO_E}(\Gamma)}(\image(\col^{\pm,\theta})/L_{p}^{\pm,\theta}).$
\end{conjecture}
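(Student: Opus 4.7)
The plan is to use the four-term exact sequence
\[
0\to\Hh^1_S(T_{\bar{f}}(k-1))\to\image(\col^\pm)\to\Sel_p^\pm(f/k_\infty)^\vee\to\Hh^2(T_{\bar{f}}(k-1))\to0
\]
displayed at the end of Section~\ref{itiscotorsion} to exhibit Conjecture~\ref{ourmc} at $\theta$ as equivalent to Kato's main conjecture for $\bar{f}$ at $\theta$, and then to invoke the known inclusion of Theorem~\ref{katozetamc}(c) for the half of Conjecture~\ref{ourmc} that follows unconditionally.

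After passing to $\theta$-isotypical components (which is exact since $|\Delta|=p-1$ is a unit in $\calO_E$), both $\Hh^1_S(T_{\bar{f}}(k-1))^\theta$ (torsion-free of rank one by Theorem~\ref{katoglobal}) and $\image(\col^{\pm,\theta})\subseteq\Lambda_{\calO_E}(\Gamma)$ (torsion-free of rank one since it contains $L_p^{\pm,\theta}\neq 0$) have matching ranks, so $Q^\theta:=\image(\col^{\pm,\theta})/\Hh^1_S(T_{\bar{f}}(k-1))^\theta$ is $\Lambda_{\calO_E}(\Gamma)$-torsion and the four-term sequence splits into two short exact sequences around $Q^\theta$. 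I would then consider the commutative square whose rows are the natural inclusions $\ZZ(T_{\bar{f}}(k-1))^\theta\hookrightarrow\Hh^1_S(T_{\bar{f}}(k-1))^\theta$ and $L_p^{\pm,\theta}\Lambda_{\calO_E}(\Gamma)\hookrightarrow\image(\col^{\pm,\theta})$, with $\col^\pm$ providing both vertical maps. By \eqref{colemankato} and $\Lambda_{\calO_E}(\Gamma)$-linearity, the induced top map $\ZZ(T_{\bar{f}}(k-1))^\theta\to L_p^{\pm,\theta}\Lambda_{\calO_E}(\Gamma)$ is surjective; it is also injective since the lower horizontal $\Hh^1_S(T_{\bar{f}}(k-1))^\theta\hookrightarrow\image(\col^{\pm,\theta})$ is injective by Corollary~\ref{injection}. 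The snake lemma then yields
\[
0\to\Hh^1_S(T_{\bar{f}}(k-1))^\theta/\ZZ(T_{\bar{f}}(k-1))^\theta\to\image(\col^{\pm,\theta})/L_p^{\pm,\theta}\Lambda_{\calO_E}(\Gamma)\to Q^\theta\to0.
\]

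Multiplicativity of characteristic ideals across this sequence and $0\to Q^\theta\to\Sel_p^\pm(f/k_\infty)^{\vee,\theta}\to\Hh^2(T_{\bar{f}}(k-1))^\theta\to0$ then gives the identity
\begin{multline*}
\Char_{\Lambda_{\calO_E}(\Gamma)}\!\bigl(\image(\col^{\pm,\theta})/L_p^{\pm,\theta}\Lambda_{\calO_E}(\Gamma)\bigr)\cdot\Char_{\Lambda_{\calO_E}(\Gamma)}(\Hh^2(T_{\bar{f}}(k-1))^\theta)\\
=\Char_{\Lambda_{\calO_E}(\Gamma)}(\Sel_p^\pm(f/k_\infty)^{\vee,\theta})\cdot\Char_{\Lambda_{\calO_E}(\Gamma)}\!\bigl(\Hh^1_S(T_{\bar{f}}(k-1))^\theta/\ZZ(T_{\bar{f}}(k-1))^\theta\bigr),
\end{multline*}
which exhibits Conjecture~\ref{ourmc} at $\theta$ and Kato's main conjecture for $\bar{f}$ at $\theta$ as equivalent. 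Cancelling the common factor then converts the known inclusion $\Char(\Hh^1_S/\ZZ)^\theta\subseteq\Char(\Hh^2)^\theta$ of Theorem~\ref{katozetamc}(c) into $\Char_{\Lambda_{\calO_E}(\Gamma)}\bigl(\image(\col^{\pm,\theta})/L_p^{\pm,\theta}\Lambda_{\calO_E}(\Gamma)\bigr)\subseteq\Char_{\Lambda_{\calO_E}(\Gamma)}(\Sel_p^\pm(f/k_\infty)^{\vee,\theta})$, i.e.\ one inclusion of Conjecture~\ref{ourmc}. The main obstacle is the reverse inclusion, which is precisely the still-open direction of Kato's main conjecture; Section~\ref{CM} handles it for CM modular forms by importing the Pollack-Rubin Euler-system argument with elliptic units.
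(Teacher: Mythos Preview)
Your proposal is correct and follows essentially the same route as the paper: the paper also uses the four-term exact sequence (derived from Corollary~\ref{injection} together with $\Sel_p^0(f/k_\infty)^\vee\cong\Hh^2(T_{\bar{f}}(k-1))$) to show that Conjecture~\ref{ourmc} is equivalent to Kato's main conjecture for $\bar{f}$, and then remarks that one inclusion follows from Theorem~\ref{katozetamc}. One small caveat: you call that inclusion ``unconditional'', but Theorem~\ref{katozetamc}(c) requires the surjectivity of $G_\QQ\to GL_{\calO_E}(T_{\bar{f}})$ (otherwise only the $\Lambda_E(\Gamma)$-version from part~(b) is available), exactly as the paper notes in the remark following Corollary~\ref{ourMC}.
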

Moreover, by Corollary \ref{inteim} and Remark~\ref{generalchar}, we have:
\begin{corollary}\label{ourMC}Let $\delta=\pm$. When $\theta=1$ or $\delta=-$, Conjecture~\ref{ourmc} is equivalent to
\[
\Char_{\Lambda_{\calO_E}(\Gamma)}\left(\Sel_p^\pm(f/k_\infty)^{\vee,\theta}\right)=\left(\varpi^{-s^\pm}L_{p}^{\pm,\theta}\right).
\]
\end{corollary}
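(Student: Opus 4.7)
The plan is to chain together Conjecture \ref{ourmc} with the explicit description of $\image(\col^{\pm,\theta})$ provided by Corollary \ref{trivialchar}/\ref{inteim} and Remark \ref{generalchar}. In each of the two stipulated regimes --- namely $\theta$ trivial (both signs) or $\delta = -$ (arbitrary $\theta$) --- these results give the clean identification
\[
\image(\col^{\pm,\theta}) = \varpi^{s^\pm}\Lambda_{\calO_E}(\Gamma),
\]
which is a free rank-$1$ $\Lambda_{\calO_E}(\Gamma)$-module sitting inside $\Lambda_E(\Gamma)$.

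Next I would compute the characteristic ideal of the quotient $\image(\col^{\pm,\theta})/(L_p^{\pm,\theta})$ directly. Multiplication by $\varpi^{-s^\pm}$ furnishes an isomorphism of $\Lambda_{\calO_E}(\Gamma)$-modules
\[
\image(\col^{\pm,\theta})/(L_p^{\pm,\theta}) \;\xrightarrow{\;\sim\;}\; \Lambda_{\calO_E}(\Gamma)/(\varpi^{-s^\pm}L_p^{\pm,\theta}),
\]
under which the inclusion $L_p^{\pm,\theta}\in\varpi^{s^\pm}\Lambda_{\calO_E}(\Gamma)$ (which follows from \eqref{colemankato}) guarantees that $\varpi^{-s^\pm}L_p^{\pm,\theta}$ lies in $\Lambda_{\calO_E}(\Gamma)$. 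Since $L_p^{\pm,\theta} \ne 0$ by Lemma \ref{notzero} (together with the remark immediately following it, asserting non-vanishing for every $\theta$-component), the quotient is genuinely $\Lambda_{\calO_E}(\Gamma)$-torsion, and its characteristic ideal is precisely $(\varpi^{-s^\pm}L_p^{\pm,\theta})$.

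Substituting this computation into the right-hand side of Conjecture \ref{ourmc} yields the equation asserted in the corollary, and the equivalence is established. There is no real obstacle here, as the entire content was already packaged in the image calculations of Section \ref{images}; the only point deserving a mild sanity check is that the exponent $s^\pm$ is independent of the level $n$ so that the finite-level statements pass cleanly to the $\Lambda_{\calO_E}(\Gamma)$-level, but this is exactly what Corollary \ref{inteim} provides.
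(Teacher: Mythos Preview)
Your proposal is correct and follows exactly the approach the paper takes: the paper's entire argument is the single clause ``by Corollary~\ref{inteim} and Remark~\ref{generalchar}'', and you have simply unpacked what that clause means by writing out the isomorphism $\image(\col^{\pm,\theta})/(L_p^{\pm,\theta})\cong\Lambda_{\calO_E}(\Gamma)/(\varpi^{-s^\pm}L_p^{\pm,\theta})$ and reading off the characteristic ideal. One minor caveat: your justification that $L_p^{\pm,\theta}\in\varpi^{s^\pm}\Lambda_{\calO_E}(\Gamma)$ via \eqref{colemankato} tacitly uses that $\kato$ lands integrally in $\HIw(T_{\bar f}(k-1))$, which is itself part of Kato's conjecture rather than an unconditional fact---but this is already implicit in the formulation of Conjecture~\ref{ourmc}, so it is not a gap in your argument.
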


\begin{remark}
It is clear that the RHS in Conjectures~\ref{ourmc} and \ref{ourMC} are contained in the LHS if the homomorphism $G_{\QQ}\rightarrow GL_{\calO_E}(T_{\bar{f}})$ is surjective or if we replace $\Lambda_{\calO_E}(\Gamma)$ by $\Lambda_E(G_\infty)$ by Theorem~\ref{katozetamc}.
\end{remark}


\section{CM forms}\label{CM}
We now follow the strategy of \cite{pollackrubin04} to prove that equality holds in Corollary~\ref{ourMC} (with $\theta=1$) for CM forms.

\subsection{Generality of CM forms}
We first briefly review the theory of CM modular forms. Details can be found in \cite[Section~15]{kato04}.

Let $K$ be an imaginary quadratic field with idele class group $C_K$. A Hecke character of $K$ is simply a continuous homomorphism $\phi:C_K\rightarrow\CC^\times$ with complex $L$-function
\[
L(\phi,s)=\prod_v(1-\phi(v)N(v)^{-s})^{-1}
\]
where the product runs through the finite places $v$ of $K$ at which $\phi$ is unramified, $\phi(v)$ is the image of the uniformiser of $K_{v}$ under $\phi$ and $N(v)$ is the norm of $v$.

Let $f$ be a modular form as defined in Section~\ref{modularforms} with complex multiplication, i.e. $L(f,s)=L(\phi,s)$ for some Hecke character $\phi$ of an imaginary quadratic field $K$. Then, for a good prime $p$,
\[
1-a_pp^{-s}+\epsilon(p)p^{k-1-2s}=\begin{cases}
1-\phi(p)p^{-2s}&\text{if $p$ is inert in $K$}\\
(1-\phi({\mathfrak P})p^{-s})(1-\phi(\bar{\mathfrak{P}})p^{-s})&\text{if $(p)=\mathfrak{P}\bar{\mathfrak{P}}$ in $K$.}
\end{cases}
\]
Therefore, $a_p=0$ if $p$ is inert in $K$. If $p$ splits into $\mathfrak{P}\bar{\mathfrak{P}}$, $a_p=\phi({\mathfrak P})+\phi(\bar{\mathfrak{P}})$. It is known that $\phi({\mathfrak P})+\phi(\bar{\mathfrak{P}})$ is a $p$-adic unit, hence $f$ is ordinary at $p$. Therefore, for a good prime $p\nmid N$, $a_p=0$ if and only if $f$ is supersingular at $p$. We fix such a $p$ which is odd.

Let $\OO$ be the ring of integers of $K$. We denote the conductor of $\phi$ by $\ff$. For an ideal $\mathfrak{a}$ of $K$, $K(\mathfrak{a})$ denotes the ray class field of $K$ of conductor $\mathfrak{a}$. We write $\K$ for the union $\cup_n K(p^n\ff)$. Then, the action of $G_\QQ$ on $V_f$ factors through $\Gal(\K/\QQ)$. The same is then true for $V_f(j)$ for all $j$ as $k_\infty\subset\K$.

More specifically, $V_f\cong V(\phi)\oplus\tau V(\phi)$ where $V(\phi)$ is the one-dimensional $E$-representation of $G_K$ associated to $\phi$ and $\tau$ is the complex conjugation. The action of $G_\QQ$ is given by
\[
\sigma(x,y)=\begin{cases}
(\sigma(x),\tau(\tau\sigma\tau)(y))&\text{if $\sigma\in G_K$},\\
((\tau\sigma\tau)(y),\tau\sigma(x))&\text{otherwise}.
\end{cases}
\]

In addition to assumptions (1) and (2), we assume for simplicity that the following holds:
\begin{itemize}
\item {\bf Assumption (3)}: $f$ is defined over $\QQ$ (i.e. $a_n\in\ZZ$ for all $n$) and $K$ has class number 1.
\end{itemize}

This is essential for the properties of elliptic units which we need to hold. Note that as a vector space, $V_f$ is isomorphic to $K_p$ (where $K_p$ denotes the completion of $K$ at $p$) and we can take $T_f$ to be the lattice corresponding to $\OO_p$. We write $\rho$ for the character given by
\[
\rho:G_K\rightarrow\textrm{Aut}(V_f/T_f(1))\cong \OO_p^\times.
\]
For simplicity, we write $A$ for $V_f/T_f(1)$ from now on.

Recall that $K_c$ denote the $\Zp$-cyclotomic extension of $K$. We write $K_m$ for the unique $\Zp^2$-extension of $K$ and $\mathfrak{L}$ denotes $\calO_p[[\Gal(K_m/K)]]$. Given a $\Zp[[\Gal(\K/K)]]$-module $Y$, we write $Y_F$ for $Y{\otimes}_{\Zp[[\Gal(\K/K)]]}\Zp[[\Gal(F/K)]]$ and $Y_F^\rho=Y_F(\rho^{-1})$
 where $F=K_c$ or $K_m$.

Let $F$ be an extension of $\QQ$. Following \cite{rubin85}, we define a modified Selmer group:
\[
\Sel_p'(f/F)=\ker\left(H^1(F,A)\rightarrow\prod_{v\nmid p} \frac{H^1(F_v,A)}{H^1_f(F_v,A)}\right).
\]

For a finite abelian extension $F$ of $K$, we define groups $C_F$, $E_F$ and $U_F$ as in \cite{pollackrubin04}: $U_F$ is the pro-$p$ part of the local unit group $(\OO_F\otimes\ZZ_p)^\times$, $E_F$ is the closure of the projection of the global units $\OO_F^\times$ into $U_F$ and $C_F$ is the closure of the projection of the subgroup of elliptic units (as defined in \cite[Section~1]{rubin91}, see also Section~\ref{EU} below) into $U_F$. We then define
\[
\C=\lim_\leftarrow C_{F},\ \mathcal{E}=\lim_\leftarrow E_{F}\ \ \textrm{and}\ \ 
\U=\lim_\leftarrow U_{F}
\]
where the inverse limits are taken over finite extensions $F$ of $K$ inside $\K$ and the connecting map is the norm map.

Finally, let $M$ be the maximal abelian $p$-extension of $\K$ which is unramified outside $p$ and write $\X$ for the Galois group of $M$ over $\K$.


\renewcommand{\a}{\mathfrak{a}}
\renewcommand{\b}{\mathfrak{b}}

\subsubsection{Elliptic units}\label{EU}

We now briefly review the definition of elliptic units associated to $K$. Let $\a$ and $\b$ be non-zero ideals of $\OO_K$ such that $\a$ is prime to $6\b$ and the natural map $\OO_K^\times\rightarrow(\OO_K/\b)^\times$ is injective. There exists an elliptic function on $\CC/\b$ with zeros and poles given by 0 (with multiplicity $N(\a)$) and the $\a$-division points respectively. There exists a unique such function if we impose some norm compatibility condition on its values as $\a$ varies. We write $_\a\theta_\b$ for this unique function and let $_\a z_\b=_\a\theta_\b(1)^{-1}$. Then, $_\a z_\b\in K(\b)^\times$ for any $\a$ and $\b$ as above. For a fixed $\b$, the group of elliptic units in $K(\b)$ is defined to be the group generated by $_\a z_\b^{\sigma}$  where $\sigma\in\Gal(K(\b)/K)$ and the roots of unity in $K(\b)$.


\subsection{Properties of $\Sel_p'$}

In this section, we generalise \cite[Theorem 2.1]{pollackrubin04}. We do this by generalising three results of \cite{rubin85}.

\begin{lemma}\label{selpi}
There is an isomorphism $\Sel_p'(f/K_c)\cong\Sel_p(f/K_c)$.
\end{lemma}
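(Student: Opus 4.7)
The inclusion $\Sel_p(f/K_c)\subseteq\Sel_p'(f/K_c)$ is immediate from the definitions, since the only difference between the two Selmer groups is that $\Sel_p$ imposes the Bloch--Kato $H^1_f$-condition at places above $p$ while $\Sel_p'$ does not. To prove the converse inclusion, it suffices to show that any class $c\in\Sel_p'(f/K_c)$ localises into $H^1_f(K_{c,v},A)$ at each prime $v\mid p$ of $K_c$.

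The strategy is to adapt Rubin's argument from \cite{rubin85} (for CM elliptic curves) by exploiting the CM decomposition. Since $f$ is CM and supersingular at $p$, we have $a_p=0$, and hence $p$ is inert in $K$. Thus complex conjugation $\tau$ does not lie in the decomposition group $G_{K_p}$, so the splitting $V_f\cong V(\phi)\oplus\tau V(\phi)$ of $G_K$-modules (recalled at the start of this section) is in fact a splitting of $G_{K_p}$-modules, and therefore of $G_{K_{c,v}}$-modules for any $v\mid p$ of $K_c$. This induces a compatible direct-sum decomposition of $A$ and consequently of $H^1(K_{c,v},A)$ and of $H^1_f(K_{c,v},A)$, reducing the lemma to the corresponding statement for each one-dimensional CM summand separately.

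For each such one-dimensional crystalline summand of $A$, one computes the quotient $H^1(K_{c,v},\cdot)/H^1_f(K_{c,v},\cdot)$ using its Hodge--Tate weight (inherited from the weights $\{1,2-k\}$ of $V_f(1)$) together with the fact that $K_{c,v}/K_p$ is a totally ramified $\Zp$-extension of the unramified quadratic extension $K_p$ of $\Qp$. Local Tate duality identifies this obstruction group with the Pontryagin dual of $H^1_f$ of the Tate-twisted character, and a Poitou--Tate argument then forces the image of $c\in\Sel_p'(f/K_c)$ to vanish in it. The principal obstacle will be handling the two CM summands (which contribute asymmetrically, reflecting the two different Hodge--Tate weights) and verifying that Rubin's local computations in \cite{rubin85} carry through for arbitrary weight $k\ge 2$ rather than only $k=2$.
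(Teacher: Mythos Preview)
Your proposal is a strategy rather than a proof, and it diverges substantially from the paper's argument. The paper proves something much stronger than what you are aiming for: it shows that the entire local obstruction group
\[
H^1(K_{c,p},A)/H^1_f(K_{c,p},A)
\]
is zero, so the two Selmer conditions at $p$ coincide for \emph{every} local class, not just for restrictions of global ones. The paper does this by observing that the Pontryagin dual of this quotient is $\displaystyle\lim_{\leftarrow}H^1_f(K_p^{(n)},T_{\bar f}(k-1))$, then using inflation--restriction together with Lemma~\ref{inv} to pass from $K_p^{(n-1)}$ to the $\Delta$-invariants of the full cyclotomic level $K_{p,n}$, and finally invoking Perrin-Riou's result \cite[Theorem~0.6]{perrinriou00b} that this inverse limit of $H^1_f$ vanishes in the non-ordinary case. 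No CM decomposition is used.

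Your sketch has a genuine gap at the crucial step. After dualising to identify the obstruction with a Pontryagin dual of an $H^1_f$, you appeal to ``a Poitou--Tate argument'' to force the image of an arbitrary $c\in\Sel_p'(f/K_c)$ to vanish. But Poitou--Tate duality is a statement about global cohomology and its localisations; it does not by itself produce vanishing of a local class unless you already know the relevant local group is zero. In other words, you still need the input that the dual inverse limit of $H^1_f$ vanishes, which is precisely Perrin-Riou's theorem that the paper invokes and which you do not cite. Moreover, the CM splitting you propose complicates matters: over $K_p$ (the unramified quadratic extension of $\Qp$) each one-dimensional summand carries a pair of Hodge--Tate weights, and the required vanishing of $\varprojlim H^1_f$ for each summand is not a consequence of elementary local duality or weight considerations alone. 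Since the paper's route already shows the local obstruction is identically zero without any decomposition, that is the cleaner path; if you wish to salvage your approach, you must either supply the Perrin-Riou vanishing for each CM character separately or abandon the global Poitou--Tate step and prove the local vanishing directly.
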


\begin{proof}  
By definitions, we have the following exact sequence:
\[
0\rightarrow\Sel_p(f/K_c)\rightarrow\Sel_p'(f/K_c)\rightarrow\frac{H^1(K_{c,p},A)}{H^1_f(K_{c,p},A)}.
\]
Therefore, it suffices to show that $H^1(K_{c,p},A)=H^1_f(K_{c,p},A)$. By \cite[Proposition 3.8]{blochkato},
\[
\left(\frac{H^1(K_{c,p},A)}{H^1_f(K_{c,p},A)}\right)^\vee=\lim_{\leftarrow}H^1_f(K_{p}^{(n)},T_{\bar{f}}(k-1)).
\]
 Hence, it suffices to show that the said inverse limit is $0$.

Note that $\Gal\left(K_{p,n}/K_{p}^{(n-1)}\right)\cong\Delta$, we have the inflation-restriction exact sequence
\[\begin{split}
0\rightarrow H^1(\Delta,T_{\bar{f}}(k-1)^{G_{K_{p,n}}})\rightarrow H^1(K_{p}^{(n-1)},T_{\bar{f}}(k-1))\rightarrow H^1(K_{p,n},T_{\bar{f}}(k-1))^\Delta\\\rightarrow H^2(\Delta,T_{\bar{f}}(k-1)^{G_{K_{p,n}}}).
\end{split}\]

As $K_p/\QQ_p$ is unramified, the proof of Lemma~\ref{inv} implies $T_{\bar{f}}(k-1)^{G_{K_{p,n}}}=0$ for all $n$. Therefore, 
\[
H^1(K_{p}^{(n-1)},T_{\bar{f}}(k-1))\cong H^1(K_{p,n},T_{\bar{f}}(k-1))^\Delta.
\]
By \cite[Theorem 0.6]{perrinriou00b}, we have $\displaystyle\lim_{\leftarrow}H^1_f(K_{n,p},T_{\bar{f}}(k-1))=0$, hence we are done.
\end{proof}

This corresponds to \cite[Theorem 2.1]{rubin85}, which holds for any infinite extensions of $K$ contained in $\K$. Since we have used a result on the inverse limit of $H^1_f$ over $K_{p,n}$, the proof above would unfortunately not work in such generality. 

We now generalise \cite[Proposition 1.1]{rubin85}.

\begin{lemma}\label{selhom}
There is an isomorphism $\Sel_p'(f/\K)\cong\Hom(\X,A)$.
\end{lemma}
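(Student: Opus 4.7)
The plan is to generalise the argument of \cite[Proposition~1.1]{rubin85}. The crucial observation is that $G_\K$ acts trivially on $A$: by construction, the $G_\QQ$-action on $V_f$ factors through $\Gal(\K/\QQ)$, and since $k_\infty\subset\K$ the cyclotomic character also becomes trivial on $G_\K$. Hence $G_\K$ acts trivially on $V_f(1)$ and on $A=V_f/T_f(1)$, so that
\[
H^1(\K,A)=\Hom_{\mathrm{cts}}(G_\K,A).
\]

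Next I would analyse the local conditions. For a finite place $v\nmid p$ of $\K$, the same triviality gives $H^1_f(\K_v,V_f(1))=H^1_{\mathrm{ur}}(\K_v,V_f(1))=\Hom_{\mathrm{cts}}(G_{\K_v}/I_v,V_f(1))$ by the Bloch--Kato definition at primes above $\ell\ne p$. Pushing this forward through $V_f(1)\twoheadrightarrow A$, and observing that any continuous homomorphism from the profinite group $G_{\K_v}/I_v$ to the discrete $p$-primary group $A$ has finite image and so lifts to $V_f(1)$, one obtains $H^1_f(\K_v,A)=\Hom_{\mathrm{cts}}(G_{\K_v}/I_v,A)$. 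Consequently a class in $\Hom_{\mathrm{cts}}(G_\K,A)$ lies in $\Sel_p'(f/\K)$ if and only if its restriction to each inertia group $I_v$ with $v\nmid p$ finite is zero; archimedean places contribute nothing, as $K$ is imaginary quadratic and $p$ is odd.

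Combining the two steps, $\Sel_p'(f/\K)$ is identified with the continuous homomorphisms $G_\K\to A$ that are unramified at every finite $v\nmid p$. Since $A$ is abelian and $p$-primary, such a homomorphism factors through the maximal abelian pro-$p$ quotient of $G_\K$ unramified outside $p$, namely $\X=\Gal(M/\K)$, giving the desired isomorphism $\Sel_p'(f/\K)\cong\Hom(\X,A)$.

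The main obstacle I anticipate is the lifting step at bad finite places $v\mid N\ff$ with $v\nmid p$, where $\K_v/K_v$ can be ramified. One has to show that a continuous character of $G_{\K_v}/I_v$ valued in $A[p^n]$ lifts to $V_f(1)$; the obstruction lies in $H^2(G_{\K_v}/I_v,T_f(1))$, which vanishes because the residue field of $\K_v$ is an algebraic extension of $\FF_\ell$ with $\ell\ne p$, so its absolute Galois group is a closed subgroup of $\hat{\ZZ}$ and therefore of cohomological dimension at most $1$.
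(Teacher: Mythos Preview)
Your argument is correct and follows essentially the same route as the paper: trivialise the Galois action over $\K$ to identify $H^1(\K,A)$ with $\Hom(G_\K,A)$, then show the local conditions at $v\nmid p$ amount to unramifiedness so that $\Sel_p'(f/\K)=\Hom(\X,A)$. The only difference is that where you verify $H^1_f(\K_v,A)=H^1_{\mathrm{ur}}(\K_v,A)$ by an explicit lifting argument (using $\mathrm{cd}_p(G_{\K_v}/I_v)\le 1$), the paper simply invokes \cite[Lemma~3.5(iv)]{rubin00}, which gives the same conclusion from the fact that $A$ is unramified at $v$.
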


\begin{proof}
Since the action of $G_K$ on $A$ factors through $\Gal(\K/K)$, we have $H^1(\K,A)\cong\Hom(G_\K,A)$. We can therefore identify $\Sel_p'(f/\K)$ with a subgroup of $\Hom(G_\K,A)$. Also, the triviality of the action implies that $A$ is unramified at all places of $\K$. Therefore, $H^1_f(\K_v,A)=H^1_\textrm{ur}(\K_v,A)$ for all $v\nmid p$ by \cite[Lemma 3.5(iv)]{rubin00}. Hence, $\Sel_p'(f/\K)$ corresponds to the subgroup $\Hom(\X,A)\subset\Hom(G_\K,A)$.
\end{proof}

Before we continue, we state a result of Rubin.

\begin{lemma}\label{Rubin0}
For $i=1,2$, $H^i(\K/K_c,A)=0$. 
\end{lemma}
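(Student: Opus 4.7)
The plan is to combine class field theory, which identifies $\Gal(\K/K_c)$ explicitly as a profinite abelian group, with the Hochschild--Serre spectral sequence, which reduces the desired vanishing to cohomology of $\Zp$.

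First I would describe $\Gal(\K/K)$ via class field theory. The supersingular hypothesis $a_p=0$ together with the CM decomposition $L(f,s)=L(\phi,s)$ forces $p$ to be inert in $K$; combined with class number $1$ and $p\nmid\ff$ (a consequence of $p\nmid N$), class field theory gives
$$\Gal(\K/K)\cong\bigl(\OO_p^\times\times(\OO/\ff)^\times\bigr)/\OO^\times.$$
Since $\OO_p^\times=\mu_{p^2-1}\times(1+p\OO_p)$ with $1+p\OO_p$ free of $\Zp$-rank two, and since $\mu_{p^2-1}$ and $(\OO/\ff)^\times$ have orders prime to $p$ (with a small additional check when $p=3$ coming from $\OO^\times$), this takes the form $\Zp^2\times\Delta_0$ for a finite group $\Delta_0$ of order prime to $p$. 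The $\Zp^2$-free quotient corresponds to $K_m$, and $K_c$ is the cyclotomic $\Zp$-subextension of $K_m$. Hence
$$1\to\Gal(\K/K_m)\to\Gal(\K/K_c)\to\Gal(K_m/K_c)\to 1,$$
with kernel $\cong\Delta_0$ and quotient $\cong\Zp$ (the anticyclotomic direction).

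Next I would apply the Hochschild--Serre spectral sequence. Since $A$ is $p$-primary and $|\Delta_0|$ is prime to $p$, one has $H^q(\Delta_0,A)=0$ for $q\ge 1$, so the spectral sequence degenerates to
$$H^i\bigl(\Gal(\K/K_c),A\bigr)\cong H^i\bigl(\Zp,A^{\Delta_0}\bigr).$$
The case $i=2$ is immediate since $\Zp$ has $p$-cohomological dimension one. For $i=1$, set $M:=A^{\Delta_0}$ and let $\gamma$ be a topological generator of $\Gal(K_m/K_c)$; I need $(\gamma-1)M=M$. If $\rho|_{\Delta_0}$ is non-trivial, pick any $\delta\in\Delta_0$ with $\rho(\delta)\ne 1$: as $\rho(\delta)$ is a root of unity of order prime to $p$, $\rho(\delta)-1\in\OO_p^\times$, so it annihilates $A\cong K_p/\OO_p$ and $M=0$. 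Otherwise $M=A$ is divisible of $\OO_p$-rank one, and multiplication by $\rho(\gamma)-1$ is surjective as soon as $\rho(\gamma)\ne 1$.

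The hard part will be this last inequality $\rho(\gamma)\ne 1$, i.e.\ that $\rho$ is non-trivial on the anticyclotomic $\Zp$-tower. I would extract this from the decomposition $V_f|_{G_K}\cong V(\phi)\oplus\tau V(\phi)$ recalled above: the character $\rho$ describing the $G_K$-action on $A$ is essentially $\phi\chi$, and although $\chi$ is trivial on $\Gal(K_m/K_c)$, the Hecke character $\phi$ has non-trivial infinity type of weight $k-1\ge 1$, which via reciprocity forces $\phi$ to have infinite order on any $\Zp$-subextension of $\K/K$, in particular on $K_m/K_c$.
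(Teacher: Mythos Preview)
Your overall strategy---identify $\Gal(\K/K_c)$ via class field theory, kill a prime-to-$p$ normal subgroup by Hochschild--Serre, then handle the residual $\Zp$ using that $\rho$ is nontrivial on the anticyclotomic direction---is exactly the shape of Rubin's argument (which is all the paper invokes). Your justification of $\rho(\gamma)\ne 1$ via the infinity type of $\phi$ is correct and is the extra input needed beyond the weight-two case.

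There is, however, a genuine gap. You assert that $\Delta_0=\Gal(\K/K_m)$ has order prime to $p$ because ``$\mu_{p^2-1}$ and $(\OO/\ff)^\times$ have orders prime to $p$''. The second claim is not justified: $|(\OO/\ff)^\times|=\prod_{\mathfrak q^e\|\ff}N(\mathfrak q)^{e-1}(N(\mathfrak q)-1)$, and although $p\nmid N$ gives $p\nmid N(\mathfrak q)$, nothing prevents $p\mid N(\mathfrak q)-1$ for some $\mathfrak q\mid\ff$. When that happens $\Delta_0$ has a nontrivial $p$-part $P$; your spectral-sequence reduction then leaves you with $H^i(P,A[\rho(\gamma)-1])$, which is nonzero if $\rho|_P$ is trivial---and the conductor condition on $\phi$ does not force $\phi$ to be nontrivial on the $p$-Sylow of $(\OO/\ff)^\times$. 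Rubin's actual argument avoids this by working not with all of $\Delta_0$ but with the specific prime-to-$p$ subgroup coming from $\mu_{p^2-1}\subset\OO_p^\times$ via local class field theory at $p$: for elliptic curves the CM character gives an isomorphism $\Gal(K(E_{p^\infty})/K)\cong\OO_p^\times$, so $\rho$ is visibly faithful on this $\mu_{p^2-1}$, hence $A^{\mu_{p^2-1}}=0$ and one is done immediately, without ever meeting the $p$-part of $(\OO/\ff)^\times$. For higher weight you would need to check that $\rho$ (which is now essentially the $(k-1)$-th power of the CM character twisted by $\chi$) remains nontrivial on $\mu_{p^2-1}$.

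A small wording slip: in your Case~1 you say ``$\rho(\delta)-1\in\OO_p^\times$, so it annihilates $A$''. You mean that multiplication by the unit $\rho(\delta)-1$ is injective on $A\cong K_p/\OO_p$, hence $A^{\langle\delta\rangle}=0$; it certainly does not annihilate $A$.
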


\begin{proof}
See \cite[proof of Proposition 1.2]{rubin85}.
\end{proof}

This allows us to generalise \cite[Proposition 1.2]{rubin85}.

\begin{lemma}\label{seliso}
There is an isomorphism $\Sel_p'(f/K_c)\cong\Sel_p'(f/\K)^{{\rm Gal}(\K/K_c)}$.
\end{lemma}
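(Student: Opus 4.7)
The plan is to carry out a standard inflation--restriction comparison of Selmer groups for the extension $\K/K_c$, with Lemma~\ref{Rubin0} providing the main input. Since the action of $G_K$ on $A=V_f/T_f(1)$ factors through $\Gal(\K/K)$, the subgroup $G_\K$ acts trivially on $A$, so $A^{G_\K}=A$. The Hochschild--Serre inflation--restriction sequence for $\K/K_c$, combined with the vanishing of $H^i(\K/K_c,A)$ for $i=1,2$ in Lemma~\ref{Rubin0}, yields an isomorphism
\[
\mathrm{inf}\colon H^1(K_c,A) \xrightarrow{\sim} H^1(\K,A)^{\Gal(\K/K_c)}.
\]

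Next I would write down the commutative diagram with exact rows whose top row defines $\Sel_p'(f/K_c)$ and whose bottom row is the $\Gal(\K/K_c)$-invariants of the defining sequence for $\Sel_p'(f/\K)$, with middle vertical the isomorphism above. A diagram chase (or snake lemma) reduces the claim to injectivity of the product of local restriction maps
\[
\frac{H^1(K_{c,v},A)}{H^1_f(K_{c,v},A)} \longrightarrow \frac{H^1(\K_w,A)}{H^1_f(\K_w,A)}
\]
as $v$ ranges over places of $K_c$ not above $p$ and $w$ is any chosen place of $\K$ above $v$. Since $v\nmid p$, Bloch--Kato identifies $H^1_f$ with $H^1_{\rm ur}$ on both sides, so this becomes a question about the singular (ramified) quotients of local cohomology. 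As $\K/K$ is unramified outside $p\ff$, at almost all $v$ the extension $\K_w/K_{c,v}$ is unramified and the desired injectivity follows from inflation--restriction applied to the residue-field Galois groups together with the trivial $G_\K$-action on $A$.

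The main obstacle I expect is the local check at the finitely many primes $v\mid\ff$, where $\K_w/K_{c,v}$ can be tamely ramified; here one must verify that a cocycle $c\in H^1(K_{c,v},A)$ whose restriction to $H^1(\K_w,A)$ lies in $H^1_{\rm ur}(\K_w,A)$ was already in $H^1_{\rm ur}(K_{c,v},A)$. This comes down to a direct analysis of how tame inertia acts on the explicit description of $A$ as a $G_K$-module given by $V(\phi)\oplus\tau V(\phi)$, which forces the singular-quotient restriction to remain injective. Away from such primes the argument is a formal consequence of Lemma~\ref{Rubin0}.
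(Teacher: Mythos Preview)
Your global setup via Hochschild--Serre together with Lemma~\ref{Rubin0}, and your reduction to injectivity of the local singular-quotient maps
\[
\frac{H^1(K_{c,v},A)}{H^1_{\rm ur}(K_{c,v},A)}\longrightarrow\frac{H^1(\K_w,A)}{H^1_{\rm ur}(\K_w,A)}
\]
for $v\nmid p$, match the paper exactly. You also correctly isolate the only nontrivial point: the finitely many primes $v\mid\ff$.

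Where your proposal is incomplete is precisely there. You say the check ``comes down to a direct analysis of how tame inertia acts on $V(\phi)\oplus\tau V(\phi)$'', but you do not perform this analysis, and it is not obvious that it goes through without further input. The paper avoids any such module-theoretic computation by a clean two-step factorisation through the intermediate field $K_c(\ff)$. First, since $\K=\bigcup_n K(p^n\ff)$, the extension $\K/K_c(\ff)$ is unramified at every place not above $p$; hence for $w\mid v'\mid v$ the inertia subgroups of $G_{K_c(\ff)_{v'}}$ and $G_{\K_w}$ coincide, and the restriction map on $H^1(I,A)$, and therefore on singular quotients, is injective (indeed the identity). Second, $\Gal(K_c(\ff)/K_c)$ has trivial Sylow $p$-subgroup, and since $A$ is $p$-primary this forces the restriction from $K_{c,v}$ to $K_c(\ff)_{v'}$ to be injective on singular quotients as well. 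Composing gives the desired injectivity for $K_{c,v}\to\K_w$.

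So the paper's argument at $v\mid\ff$ is purely group-theoretic (unramifiedness above $K_c(\ff)$ plus prime-to-$p$ degree below), whereas your sketch leaves the actual verification open. I would recommend replacing your proposed tame-inertia analysis with this factorisation.
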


\begin{proof}
We have the inflation-restriction exact sequence
\[
0\rightarrow H^1(\K/K_c,A)\rightarrow H^1(K_c,A)\stackrel{r}{\rightarrow}H^1(\K,A)^{{\rm Gal}(\K/K_c)}\rightarrow H^2(\K/K_c,A)
\]
where $r$ is the restriction map. Consider the following commutative diagram:
\[
\xymatrix{
H^1(K_c,A)\ar[r]^r\ar[d]&H^1(\K,A)\ar[d]\\
H^1(K_{c,v},A)/H^1_f(K_{c,v},A)\ar[r]&H^1(\K_{v'},A)/H^1_f(\K_{v'},A)
}
\]
where $v\nmid p$ is a place of $K_c$ and $v'$ is a place of $\K$ above $v$. It clearly implies that 
\[r\left(\Sel_p'(f/K_c)\right)\subset\Sel_p'(f/\K).\]

Write $v'$ for the place of $K_c(\ff)$ below $v'$, then $v'$ is unramified in $\K/K_c(\ff)$. Therefore, the map
\[
r_{v'}:H^1(I_{K_c(\ff)_{v'}},A)\rightarrow H^1(I_{\K_{v'}},A)
\]
where $I$ denotes the inertia group is injective. This implies that
\[
H^1(K_c(\ff)_{v'},A)/H^1_f(K_c(\ff)_{v'},A)\rightarrow H^1(\K_{v'},A)/H^1_f(\K_{v'},A)
\]
is injective because the $H^1_f$ coincide with $H^1_{\rm ur}$. But $\Gal(K_c(\ff)/K_c)$ has trivial Sylow $p$-subgroup, hence the bottom row of the commutative diagram above is injective. Therefore, we have 
\[
r^{-1}(\Sel_p'(f/\K))\subset\Sel_p'(f/K_c).\]
Hence, we have an exact sequence:
\[
0\rightarrow H^1(\K/K_c,A)\rightarrow\Sel_p'(f/K_c)\stackrel{r}{\rightarrow}\Sel_p'(f/\K)^{{\Gal}(\K/K_c)}\rightarrow H^2(\K/K_c,A).
\]
Hence, we are done by Lemma~\ref{Rubin0}.
\end{proof}

We can now give a generalisation of \cite[Theorem 2.1]{pollackrubin04}:

\begin{corollary}
$\Sel_p(f/K_c)\cong\Hom_{\OO}(\X^{\rho}_{K_c},K_p/\OO_p)$.
\end{corollary}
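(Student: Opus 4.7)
The plan is to chain together Lemmas~\ref{selpi}, \ref{seliso}, and \ref{selhom}, and then unpack the $\Gal(\K/K_c)$-invariants on the right-hand side using the definition of $\X^\rho_{K_c}$.

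First I would combine the three lemmas: by Lemma~\ref{selpi} we have $\Sel_p(f/K_c)\cong\Sel_p'(f/K_c)$; by Lemma~\ref{seliso} this equals $\Sel_p'(f/\K)^{\Gal(\K/K_c)}$; and by Lemma~\ref{selhom} the latter is $\Hom(\X,A)^{\Gal(\K/K_c)}$. So it remains to identify
\[
\Hom(\X,A)^{\Gal(\K/K_c)}\ \cong\ \Hom_{\OO}\bigl(\X^{\rho}_{K_c},K_p/\OO_p\bigr).
\]

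For this, I would first check that the isomorphism in Lemma~\ref{selhom} is $\Gal(\K/K)$-equivariant, where $\Gal(\K/K)$ acts on $\X$ by conjugation and on $A$ via $\rho$. Then a homomorphism $\phi\colon\X\to A$ lies in $\Hom(\X,A)^{\Gal(\K/K_c)}$ precisely when $\phi(\sigma x\sigma^{-1})=\rho(\sigma)\phi(x)$ for all $\sigma\in\Gal(\K/K_c)$ and $x\in\X$. Recall that $A=V_f/T_f(1)\cong K_p/\OO_p$ as $\OO_p$-modules, with the $G_K$-action through $\rho$. The twisted equivariance condition above is, by construction, the condition that $\phi$ factors through the maximal quotient of $\X$ on which $\Gal(\K/K_c)$ acts by the character $\rho$, and this quotient is exactly $\X(\rho^{-1})_{\Gal(\K/K_c)}=\X^{\rho}_{K_c}$ in the notation fixed earlier in this section. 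Finally, any such $\phi$ automatically commutes with the $\OO_p$-action on $A\cong K_p/\OO_p$ (since $\X$ is an $\OO_p$-module through the Iwasawa-algebra structure that $\rho$ induces on $\X^\rho_{K_c}$), which gives the $\OO$-linearity.

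The only subtle step is verifying that taking $\Gal(\K/K_c)$-invariants in Lemma~\ref{seliso} matches, on the $\Hom(\X,A)$ side, the standard diagonal action used to define the twist-and-coinvariant module $\X^{\rho}_{K_c}$. This is a matter of tracing through the cocycle-to-homomorphism bijection $H^1(\K,A)\cong\Hom(G_\K^{\mathrm{ab}},A)$ (valid because $A$ is trivial as a $G_\K$-module, since the $G_K$-action on $A$ factors through $\Gal(\K/K)$) and checking that the inner automorphism action of $\Gal(\K/K_c)$ transports to conjugation on $\X$ together with multiplication by $\rho$ on $A$; this is the expected bookkeeping and is the one point where some care is needed, but there is no real obstacle.
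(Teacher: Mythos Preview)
Your proof is correct and follows exactly the same route as the paper: chain Lemmas~\ref{selpi}, \ref{seliso}, and \ref{selhom} to obtain $\Sel_p(f/K_c)\cong\Hom(\X,A)^{\Gal(\K/K_c)}$, then use $A|_{G_K}\cong K_p/\OO_p(\rho)$ to rewrite the invariants as $\Hom_\OO(\X^\rho_{K_c},K_p/\OO_p)$. The paper dispatches this last identification in a single clause, whereas you spell out the equivariance check and the passage to the $\rho$-twisted coinvariant quotient; that extra detail is fine and accurate, though note that your expression $\X(\rho^{-1})_{\Gal(\K/K_c)}$ relies on reading the paper's definition $Y_F^\rho=Y_F(\rho^{-1})$ in the intended (standard) order of twist-then-coinvariants.
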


\begin{proof}
On combining Lemmas~\ref{selpi}, \ref{selhom} and \ref{seliso}, we have 
\begin{eqnarray*}
\Sel_p(f/K_c)&\cong&\Sel_p'(f/K_c)\\
&\cong&\Sel_p'(f/\K)^{\textrm{Gal}(\K/K_c)}\\
&\cong&\Hom(\X,A)^{\textrm{Gal}(\K/K_c)}
\end{eqnarray*}
But $A|_{G_K}\cong K_p/\OO_p(\rho)$, hence the result.
\end{proof}


\subsection{Reciprocity law}
In this section, we generalise the reciprocity law given by \cite[Theorem 5.1]{pollackrubin04}. We first review a result of Rubin.

\begin{theorem}\label{rubin}
The $\mathfrak{L}$-module $\C^\rho_{K_m}$ is free of rank $1$. 
\end{theorem}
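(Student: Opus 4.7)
The plan is to realise $\C^\rho_{K_m}$ concretely as a principal ideal inside $\mathfrak{L}$, exploiting the fact that the ambient module of local units in the $\rho$-component is already known to be free of rank one. The starting point is the local Iwasawa-theoretic description of $\U^\rho_{K_m}$: Coleman's power series construction, applied to the Lubin--Tate formal group attached to the CM data, produces an $\mathfrak{L}$-module isomorphism $\U^\rho_{K_m} \cong \mathfrak{L}$. Since $\C \subseteq \U$ by construction, this embeds $\C^\rho_{K_m}$ as an ideal inside $\mathfrak{L}$, and in particular exhibits it as a torsion-free $\mathfrak{L}$-module of rank at most one.

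For the explicit generator, I would take the norm-compatible tower of elliptic units $\{{}_{\mathfrak{a}} z_{\ff p^{n}}\}_n$, for an auxiliary ideal $\mathfrak{a}$ coprime to $6p\ff$ chosen so that the $\rho$-projection is non-zero, and project to the $\rho$-isotypic component over $K_m$; call the resulting element $\xi \in \C^\rho_{K_m}$. The Coates--Wiles--Coleman reciprocity law applied to the theta functions ${}_{\mathfrak{a}}\theta_{\mathfrak{b}}$ identifies the image of $\xi$ under the above isomorphism with (an Euler-factor twist of) the Katz two-variable $p$-adic $L$-function $\mathcal{L}_p$ of $K$. Non-vanishing of $\mathcal{L}_p$ in $\mathfrak{L}$ (due to Gillard and independently Schneps) then yields that $\xi$ has trivial $\mathfrak{L}$-annihilator, so $\C^\rho_{K_m}$ is torsion-free of rank exactly one.

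To upgrade from ``$\C^\rho_{K_m}$ contains $\xi$'' to ``$\C^\rho_{K_m}$ is generated by $\xi$'', I would invoke the distribution relations among the Robert--Kubert theta functions ${}_{\mathfrak{a}}\theta_{\mathfrak{b}}$. These allow any norm-compatible system of elliptic units in $\C_{K_m}$ to be written as an $\mathfrak{L}$-linear combination of Galois translates of ${}_{\mathfrak{a}} z_{\ff p^{n}}$, after accounting for Euler factors at primes dividing $\ff$. Combined with a layer-by-layer application of Nakayama's lemma to reduce surjectivity to the residual level, this shows that the $\mathfrak{L}$-submodule generated by $\xi$ is all of $\C^\rho_{K_m}$. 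Consequently, the image of $\C^\rho_{K_m}$ under the Coleman isomorphism is the principal ideal of $\mathfrak{L}$ generated by the image of $\xi$, so $\C^\rho_{K_m}$ is $\mathfrak{L}$-free of rank one.

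The main obstacle is this final surjectivity step. Producing a non-zero element with trivial annihilator is relatively mechanical once Coleman reciprocity and Katz's $L$-function are in hand, but establishing that $\xi$ actually generates \emph{all} of $\C^\rho_{K_m}$ requires the subtle combinatorics of the distribution relations and careful handling of the auxiliary ideal $\mathfrak{a}$, in particular checking that the Euler factor at the primes of $\mathfrak{a}$ does not obstruct generation. This is precisely where Rubin's detailed analysis of elliptic units enters in an essential way.
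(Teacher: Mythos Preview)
The paper does not give a proof here: it cites \cite[Theorem~7.7]{rubin91} and stops. Your proposal is therefore not a competing argument but a sketch of the content behind that citation.

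As such a sketch, your outline is broadly accurate. The freeness of $\U^\rho_{K_m}$ over $\mathfrak{L}$ via Coleman power series, the embedding of $\C^\rho_{K_m}$ into it, the link to the Katz two-variable $p$-adic $L$-function, and the distribution-relation argument for generation are indeed the main ingredients of Rubin's approach. Your flagging of the surjectivity step as the delicate point is apt: that is exactly where Rubin's index computations and the handling of the auxiliary ideal $\mathfrak{a}$ come in. One minor correction: Gillard and Schneps proved vanishing of the $\mu$-invariant of $\mathcal{L}_p$, which is stronger than what you need here; mere non-vanishing of $\mathcal{L}_p$ as an element of $\mathfrak{L}$ already follows from its interpolation property together with non-vanishing of classical Hecke $L$-values.

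One caution specific to this paper's setting: here $p$ is \emph{inert} in $K$ (forced by supersingularity), so the local analysis takes place over the quadratic unramified extension $K_p/\Qp$ and the relevant formal group has height $2$ over $\Zp$. The Lubin--Tate/Coleman machinery still applies, but the details differ from the more commonly exposited split case; Rubin's paper covers both.
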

\begin{proof}
It follows from \cite[Theorem 7.7]{rubin91}.\end{proof}

We now generalise \cite[Proposition 4.1]{pollackrubin04}:
\begin{lemma}\label{U}
$H^1_f(K_{c,p},A)\cong\Hom_\OO(\U_{K_c}^\rho,K_p/\OO_p)$.
\end{lemma}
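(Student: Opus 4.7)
The plan is to follow the strategy of \cite[Proposition~4.1]{pollackrubin04}, adapting the argument from CM elliptic curves to general CM modular forms by using the character $\rho$ in place of the formal group of the curve.

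First I would use Lemma~\ref{selpi} to reduce: since $H^1_f(K_{c,p},A) = H^1(K_{c,p},A)$, it suffices to identify the full local cohomology with $\Hom_\OO(\U_{K_c}^\rho, K_p/\OO_p)$. Using the CM identification $A|_{G_K}\cong K_p/\OO_p(\rho)$ and writing
\[
H^1(K_{c,p},A) = \lim_{\rightarrow} H^1(K_p^{(n)}, K_p/\OO_p(\rho)),
\]
local Tate duality at each finite level exhibits $H^1(K_{c,p},A)$ as the Pontryagin dual of $\HIw(K_{c,p}, \OO_p(\rho^{-1}\chi))$. A direct calculation using $V_f^*\cong V_{\bar f}(k-1)$ confirms that $T_{\bar f}(k-1)|_{G_K}\cong \OO_p(\rho^{-1}\chi)$, consistent with this dualization.

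Next I would invoke the classical Iwasawa--Coleman identification
\[
\HIw(K_{c,p}, \OO_p(\chi)) \cong \U_{K_c},
\]
coming from Kummer theory $H^1(F,\mu_{p^m})\cong F^\times/(F^\times)^{p^m}$ passed to the limit along the tower. Twisting both sides by the character $\rho^{-1}$ of $G_K$ yields
\[
\HIw(K_{c,p}, \OO_p(\rho^{-1}\chi)) \cong \U_{K_c}(\rho^{-1}) = \U_{K_c}^\rho.
\]
Taking Pontryagin duals and using the identification $\Hom_{\ZZ_p}(-,\QQ_p/\ZZ_p)\cong\Hom_{\OO}(-,K_p/\OO_p)$ via the trace $K_p\to\QQ_p$ now gives the required isomorphism.

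The main obstacle I anticipate is the twisting step: while the untwisted Iwasawa--Coleman isomorphism is standard, one must verify compatibility of the twist by $\rho^{-1}$ with the inverse limit along the tower, given that $\rho$ is ramified at $p$ and does not factor through $\Gal(K_c/K)$. Concretely, the ramification of $\rho$ forces one to work at the level of $\K$ (where $\rho$ trivialises and becomes visible as a $\Gal(\K/K_c)$-action) and descend, carefully tracking how $\rho$ interacts with the unit groups $U_F$ along the tower. Once these compatibilities are established, the desired isomorphism follows directly.
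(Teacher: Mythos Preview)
Your approach is valid but takes a different route from the paper. The paper's proof goes \emph{up} first: using inflation--restriction together with Lemma~\ref{Rubin0}, it identifies $H^1(K_{c,p},A)$ with $H^1(\K_p,A)^{\Gal(\K_p/K_{c,p})}$; since $G_{\K_p}$ acts trivially on $A$, the latter is $\Hom(G_{\K_p},A)^{\Gal(\K_p/K_{c,p})}$, and then local class field theory (packaged as \cite[Proposition~5.2]{rubin87}) gives $\Hom_{\OO_p}(\U,A)$ directly. Unwinding $A\cong K_p/\OO_p(\rho)$ and the definition $\U_{K_c}^\rho=(\U\otimes\Zp[[\Gal(K_c/K)]])(\rho^{-1})$ yields the statement. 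The $H^1_f=H^1$ step via Lemma~\ref{selpi} is the same as yours.

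Your route is essentially the Pontryagin dual of this: rather than ascending by inflation--restriction and then applying class field theory, you dualize first via local Tate duality and then invoke the Kummer-theoretic description of $\HIw$ with $\Zp(1)$-coefficients. Both arguments must confront the fact that $\rho$ does not factor through $\Gal(K_c/K)$, and both resolve it by passing to $\K$. The paper's packaging is a bit cleaner: the descent is a one-line application of Lemma~\ref{Rubin0}, and the class field theory step is a black-box citation. Your approach is more explicit about the twisting mechanism, but in exchange you must check that the $\Gal(\K/K_c)$-coinvariants of $\U$ (which is how $\U_{K_c}$ is defined) really agree with the Iwasawa cohomology along the $K_c$-tower---this is the descent you flag, and it is exactly what Lemma~\ref{Rubin0} buys in the paper's argument.
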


\begin{proof}
As in the proof of Lemma~\ref{selhom}, we have $H^1(\K_p,A)\cong\Hom(G_{\K_p},A)$. But we also have an isomorphism $H^1(K_{c,p},A)\cong H^1(\K_{p},A)^{\textrm{Gal}(\K_p/K_{c,p})}$ by the inflation-restriction sequence and Lemma~\ref{Rubin0}. Hence, by local class field theory, we have
\begin{eqnarray*}
H^1(K_{c,p},A)&\cong&\Hom(G_{\K_p},A)^{\textrm{Gal}(\K_p/K_{c,p})}\\
&\cong&\Hom_{\OO_p}(\U,A)
\end{eqnarray*}
(see \cite[Proposition 5.2]{rubin87}). By the proof of Lemma \ref{selpi}, we have $H^1_f(K_{c,p},A)\cong H^1(K_{c,p},A)$, hence we are done.\end{proof}

In particular, we have a pairing $<,>:H^1_f(K_{c,p},A)\times\U_{K_c}^\rho\rightarrow K_p/\OO_p$. We now prove the explicit reciprocity law.

\begin{proposition}\label{law}
There exists a generator $\xi$ of $\C^\rho_{K_m}$ over $\mathfrak{L}$ such that for any finite extension $F$ of $K$ contained in $K_c$, $\theta$ a character on $G=\Gal(F/K)$, $x\in H^1_f(F_p,A)$ and $r$ a non-negative integer, we have
\begin{equation}\label{rec}
\sum_{\sigma\in G}\theta(\sigma)<x^\sigma\otimes p^{-r},\xi>=p^{-r}\frac{L(f_{\theta^{-1}},1)}{\Omega_f^\pm}\left[\sum_{\sigma\in G}\theta(\sigma)\exp^{-1}_{F_p,V_f(1)}(x^\sigma),\bar{\omega}_{-1}\right]
\end{equation}
where $\theta(-1)=\pm$ and $\exp^{-1}_{F_p,V_f(1)}$ is the inverse of the exponential map
\[
\exp_{F_p,V_f(1)}:F_p\otimes \DD(V_f(1))/\DD^0(V_f(1))\stackrel{\sim}{\rightarrow}H^1_f(F_p,V_f(1)).
\]
\end{proposition}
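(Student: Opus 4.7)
The plan is to reduce the statement to a combination of two known reciprocity laws: the explicit reciprocity law of Wiles/Kato/de Shalit/Colmez for the Lubin--Tate formal group attached to the character $\rho$, and Kato's identification of elliptic units (up to the Coleman map) with Katz's two-variable $p$-adic $L$-function. Since the factor $p^{-r}$ appears linearly on both sides, it suffices to establish the identity for $r=0$; the general case follows by $\OO_p$-linearity of the pairing in the first argument. We then fix $\xi$ once and for all as the element of $\C^\rho_{K_m}$ built from a standard elliptic unit $_{\a}z_{\ff p^\infty}$ (with $\a$ an auxiliary ideal prime to $6\ff p$), followed by the $\rho^{-1}$-twist and projection to $K_m$; Theorem~\ref{rubin} ensures this is a generator provided $\a$ is chosen suitably, and it is the generator for which Katz's $L$-function is normalized.

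The first reciprocity law enters as follows. Under the identifications $H^1_f(F_p,A)\cong\Hom_\OO(\U^\rho_F,K_p/\OO_p)$ from Lemma~\ref{U} and local class field theory, the pairing $<x,\xi>$ is a value of the Coleman power series of $\xi$ at the Kummer-type dual of $x$. The Kato/de Shalit/Colmez reciprocity law expresses this Coleman pairing in the shape
\[
<x,\xi>=[\log_{\mathcal{LT}}(\xi),\exp^{-1}_{F_p,V_f(1)}(x)],
\]
where $\log_{\mathcal{LT}}$ denotes the logarithm of the Lubin--Tate formal group attached to $\rho$ (so that its image naturally pairs against $\bar\omega_{-1}$ under $[\,,\,]$). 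After summing against $\theta$, the $G$-equivariance of all maps gives
\[
\sum_{\sigma\in G}\theta(\sigma)<x^\sigma,\xi>=\bigl[\Theta_\theta(\xi),\,\textstyle\sum_\sigma\theta(\sigma)\exp^{-1}_{F_p,V_f(1)}(x^\sigma)\bigr],
\]
where $\Theta_\theta(\xi)\in E\cdot\bar\omega_{-1}^{\,\vee}$ is a scalar depending only on $\xi$ and $\theta$.

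The second reciprocity law identifies $\Theta_\theta(\xi)$ with an $L$-value. Indeed, $\log_{\mathcal{LT}}(\xi)$, viewed through the Coleman isomorphism, is exactly the restriction of Katz's two-variable $p$-adic $L$-function to the line corresponding to the cyclotomic twist by $\theta$; its value at the critical point corresponding to weight $k$, twist $\theta$, $s=1$ computes $L(\phi\theta^{-1},1)$ divided by the CM period $\Omega_K^{k-1}$, up to explicit Euler and local factors. The CM identity $L(f,\theta^{-1},1)=L(\phi\theta^{-1},1)$ together with Shimura's comparison of periods $\Omega_f^\pm$ with powers of $\Omega_K$ (the sign $\pm=\theta(-1)$ arising from the Gauss sum / root number dichotomy) converts this into the $L(f_{\theta^{-1}},1)/\Omega_f^\pm$ appearing on the right-hand side of \eqref{rec}.

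The main obstacle is the bookkeeping in the final identification: one has to check that the elliptic unit $\xi$ chosen in step one is normalized so that the Katz $L$-value produced by Colmez's formula agrees on the nose with the modular $L$-value $L(f_{\theta^{-1}},1)$ divided by $\Omega_f^\pm$, and that the sign convention $\pm=\theta(-1)$ is the correct one (this is where assumption (3), namely $f$ defined over $\QQ$ and $K$ of class number $1$, is used to make Shimura's period comparison available in a clean form). Once this normalization is fixed, the two reciprocity laws combine to give exactly \eqref{rec}, and multiplication by $p^{-r}$ recovers the statement for general $r$.
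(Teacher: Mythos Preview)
Your approach is correct in outline but takes a genuinely different route from the paper's own proof. The paper avoids Lubin--Tate reciprocity and Katz's two-variable $L$-function entirely: instead, it observes that under the Kummer map (tensored with $\rho\chi^{k-2}$), the system of elliptic units $z_{p^\infty\ff}$ lands on Kato's zeta element $\kato$ itself (this is \cite[Proposition~15.9 and (15.16.1)]{kato04}). Once $\xi$ is identified with (the local image of) $\kato$, the pairing $\langle x,\xi\rangle$ is simply the local Tate pairing $[x,\kato]$, which by the $\exp/\exp^*$ duality equals $[\exp^{-1}(x),\exp^*(\kato)]$; and the interpolation property of the Kato zeta element (Theorem~\ref{katozetamc}(a)) gives $\sum_\tau\theta^{-1}(\tau)\exp^*(\kato^\tau)=L(f_{\theta^{-1}},1)\bar\omega_{-1}/\Omega_f^\pm$ directly. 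A short Galois-equivariance manipulation of $\sum_\sigma\theta(\sigma)[x^\sigma,y]$ then finishes.

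What each approach buys: the paper's route is shorter and normalization-free, because Kato has already absorbed the comparison between elliptic units, Hecke $L$-values, and modular periods into the single statement that $z_{p^\infty\ff}\mapsto\kato$. Your route via Coleman power series and Katz's measure is the classical CM picture and is self-contained in the imaginary-quadratic world, but the ``main obstacle'' you flag (matching Katz's CM periods $\Omega_K^{k-1}$ with Shimura's $\Omega_f^\pm$, and getting the Euler and Gauss-sum factors to cancel) is exactly the work that Kato's identification already encodes. In practice, to carry out your bookkeeping rigorously you would end up re-proving the CM special case of \cite[Section~15]{kato04}, so the paper's shortcut is the more efficient path here.
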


\begin{proof}
Let $z_{p^\infty\ff}=(z_{p^n\ff})_n$ be the system of norm-compatible elliptic units in
$\displaystyle
\lim_{\leftarrow}K(p^n\ff)
$
defined in \cite[Section 16.5]{kato04}, then $_\a z_{p^n\ff}$ is a multiple of $z_{p^n\ff}$ for all $\a$ and $p^n\ff$ satisfying the conditions in Section~\ref{EU}. Therefore, if we write $\xi$ as its image in $\C^\rho_{K_m}$, it must be a generator of $\C^\rho_{K_m}$ over $\mathfrak{L}$ by Theorem~\ref{rubin}.

Let $x\in H^1_f(F_p,T_f(1))$ and $y\in H^1(F_p,T_{\bar{f}}(k-1))$, we have
\begin{eqnarray*}
\sum_{\sigma\in G}\theta(\sigma)[x^\sigma,y]&=&\sum_{\sigma\in G}\theta(\sigma){\rm Tr}_{F/K}\left[\exp^{-1}_{F_p,V_f(1)}(x^\sigma),\exp^*_{F_p,V_{\bar{f}(k-1)}}(y)\right]\\
&=&\sum_{\sigma,\tau\in G}\theta(\sigma)\left[\exp^{-1}_{F_p,V_f(1)}(x^{\sigma\tau}),\exp^*_{F_p,V_{\bar{f}(k-1)}}(y^\tau)\right]\\
&=&\sum_{\sigma,\tau\in G}\theta(\sigma\tau)\theta^{-1}(\tau)\left[\exp^{-1}_{F_p,V_f(1)}(x^{\sigma\tau}),\exp^*_{F_p,V_{\bar{f}(k-1)}}(y^\tau)\right]\\
&=&\left[\sum_{\sigma\in G}\theta(\sigma)\exp^{-1}_{F_p,V_f(1)}(x^{\sigma}),\sum_{\tau\in G}\theta^{-1}(\tau)\exp^*_{F_p,V_{\bar{f}(k-1)}}(y^\tau)\right].
\end{eqnarray*}
Consider the Kummer exact sequences:
\[
\xymatrix{
\C\ar[r]\ar[d]&\U\ar[d]\\
\displaystyle\lim_{\leftarrow}H^1(\OO_{K'}[1/p],\OO_p(1))\ar[r]\ar[d]^{\otimes\rho\chi^{k-2}}&\displaystyle\lim_{\leftarrow}H^1(K'_p,\OO_p(1))\ar[d]^{\otimes\rho\chi^{k-2}}\\
\displaystyle\lim_{\leftarrow}H^1(\OO_{K'}[1/p],T_{\bar{f}}(k-1))\ar[r]&\displaystyle\lim_{\leftarrow}H^1(K'_p,T_{\bar{f}}(k-1)).
}
\]
By \cite[Proposition~15.9 and (15.16.1)]{kato04}, the image of $z_{p^\infty\ff}$ in $\displaystyle\lim_{\leftarrow}H^1(\OO_{K'}[1/p],T_{\bar{f}}(k-1))$ is $\kato$ (up to a twist) and so $\xi$ satisfies
\[
\sum_{\tau\in G}\theta^{-1}(\tau)\exp^*_{F_p,V_{\bar{f}(k-1)}}(\xi^\tau)=\displaystyle\frac{L(f_{\theta^{-1}},1)\bar{\omega}_{-1}}{\Omega_f^\pm}.
\]
Therefore, we have:
\[
\sum_{\sigma\in G}\theta(\sigma)<x^\sigma\otimes p^{-r},\xi>=p^{-r}\left[\sum_{\sigma\in G}\theta(\sigma)\exp^{-1}_{F,V_f(1)}(x^{\sigma}),\frac{L(f_{\theta^{-1}},1)\bar{\omega}_{-1}}{\Omega_f^\pm}\right]
\]
as required.
\end{proof}


\subsection{Proof of the main conjecture}

On replacing $\Qpn$ by $K_{p,n}$, we define $H^1_f(K_{p,n},W)^\pm$ and hence $\Sel_p^\pm(f/K_\infty)$ as in Section~\ref{selmer} where $W=A$ or $T_f(1)$. Let $\mathcal{G}=\Gal(K/\QQ)$. As in the proof of Lemma~\ref{selpi}, the inflation-restriction exact sequence implies that $H^1(\Qpn,W)\cong H^1(K_{p,n},W)^\mathcal{G}$ for $W=A$ or $T_f(1)$, so we recover $\Sel_p^{\pm}(f/k_\infty)$ on taking $\mathcal{G}$-invariant. Similarly, on replacing $\Qpn$ and $K_{p,n}$ by $\QQ_p^{(n-1)}$ and $K_p^{(n-1)}$ respectively, we define the $\pm$-Selmer groups $\Sel_p^\pm(f/\QQ_c)$ and $\Sel_p^\pm(f/K_c)$. Under our assumptions, they coincide with the $\Delta$-invariants of $\Sel_p^\pm(f/k_\infty)$ and $\Sel_p^\pm(f/K_\infty)$ respectively. Analogously, we have $H^1_\pm(F,T_{\bar{f}}(k-1))$ for $F=K_{p,n}$, $K_p^{(n-1)}$ or $\Qp^{(n-1)}$. Since $K_p/\Qp$ is unramified, all the results from the previous sections generalise directly on replacing $\Qp$ by $K$.

Via the isomorphism defined in Lemma \ref{U}, we define $\V^\pm\subset\U^\rho_{K_c}$ to be the subgroup corresponding to the elements of $\Hom_{\OO}\left(H^1_f(K_{c,p},A),K_p/\OO_p\right)$ which factor through $H^1_f(K_{c,p},A)^\pm$. Then, by \cite[Theorem~4.3]{pollackrubin04}, $\Sel_p^\pm(f/K_c)\cong\Hom_\OO\left(\X_{K_c}^\rho/\alpha(\V^\pm),K_p/\OO_p\right)$ where $\alpha$ is the Artin map on $\U$, which enables us to generalise \cite[Theorem~7.2]{pollackrubin04}:

\begin{theorem}
Let $s^\pm$ be as given by Corollary~\ref{inteim}, then
\[
\Char_{\Lambda_{\OO_p}(\Gamma)}\left(\Hom_\OO\left(\Sel_p^\pm(f/K_c),K_p/\OO_p\right)\right)=\left(p^{-s^\pm}L_p^\pm\right).
\]
\end{theorem}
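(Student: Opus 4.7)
Following \cite[\S 7]{pollackrubin04}, the plan is to combine Rubin's two-variable main conjecture for the imaginary quadratic field $K$ with the explicit reciprocity law Proposition~\ref{law} and the Coleman-image calculation Corollary~\ref{inteim}. The starting point is the isomorphism stated just above the theorem, which identifies $\Hom_\OO(\Sel_p^\pm(f/K_c), K_p/\OO_p)$ with $\X_{K_c}^\rho/\alpha(\V^\pm)$ up to pseudo-null modules, so that the claim becomes
\begin{equation*}
\Char_{\Lambda_{\OO_p}(\Gamma)}\bigl(\X_{K_c}^\rho/\alpha(\V^\pm)\bigr) = \bigl(p^{-s^\pm}L_p^\pm\bigr).
\end{equation*}

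I would filter this module via the inclusions $\alpha(\V^\pm) \subset \alpha(\U_{K_c}^\rho) \subset \X_{K_c}^\rho$, giving a short exact sequence
\begin{equation*}
0 \to \alpha(\U_{K_c}^\rho)/\alpha(\V^\pm) \to \X_{K_c}^\rho/\alpha(\V^\pm) \to \X_{K_c}^\rho/\alpha(\U_{K_c}^\rho) \to 0
\end{equation*}
whose outer and inner quotients can be computed separately using the multiplicativity of characteristic ideals. For the outer quotient $\X_{K_c}^\rho/\alpha(\U_{K_c}^\rho)$ (essentially a $\rho$-twist of a global-unit-free ideal-class piece, trivialised by assumption (3)), Rubin's two-variable main conjecture \cite{rubin91}, descended from $K_m$ to $K_c$ via the $\rho^{-1}$-twist, yields its characteristic ideal in terms of the two-variable $p$-adic $L$-function attached to elliptic units. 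For the inner quotient $\alpha(\U_{K_c}^\rho)/\alpha(\V^\pm)$, duality (Lemma~\ref{U}) gives $\U_{K_c}^\rho/\V^\pm \cong \Hom_\OO(H^1_f(K_{c,p},A)^\pm, K_p/\OO_p)$, and the $K_c$-analogue of Corollary~\ref{inteim} (together with Remark~\ref{generalchar} for the minus-character case) identifies this with the image of $\col^\pm$, namely $p^{s^\pm}\Lambda_{\OO_p}(\Gamma)$.

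The bridge between the two sides is the explicit reciprocity law Proposition~\ref{law}. Evaluated against the elliptic-unit generator $\xi$ of $\C_{K_m}^\rho$, formula~\eqref{rec} reproduces exactly the interpolation property of $L_p^\pm$ given by Theorem~\ref{padiclfunctions} combined with \eqref{pdef}--\eqref{mdef}. Using that the image of $\xi$ in the global Iwasawa cohomology $\Hh^1(T_{\bar{f}}(k-1))$ is a twist of $\kato$ by \cite[\S 15--16]{kato04}, and that $\col^\pm(\kato) = L_p^\pm$ by \eqref{colemankato}, this converts the elliptic-unit $L$-function on the Rubin side into $L_p^\pm$, while the factor $p^{-s^\pm}$ emerges from matching the image of $\col^\pm$ on $\Lambda_{\OO_p}(\Gamma)$.

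The main obstacle is tracking the integrality constant $s^\pm$ and the normalisation factors (the period $\Omega_\pm$, Gauss sums and the $(h+j+r-1)!$ factorials from the Perrin-Riou exponential) coherently across the three inputs: Rubin's conjecture is naturally stated in the $\mathfrak{L}$-module setting, whereas $L_p^\pm$ enters through a Coleman map whose image differs from $\Lambda_{\OO_p}(\Gamma)$ precisely by $p^{s^\pm}$. A secondary difficulty, already implicit in \cite{pollackrubin04} for the elliptic curve case but more delicate in higher weight, is to check that the descent from $K_m$ to $K_c$ and the passage from $\U/\V^\pm$ to $\alpha(\U)/\alpha(\V^\pm)$ contribute only pseudo-null discrepancies; here assumption (3) is essential to keep the elliptic-unit theory and the Artin map $\alpha$ in the form used by \cite{rubin91}.
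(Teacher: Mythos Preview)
Your proposal has the right ingredients but takes a different route from the paper, and your filtration step contains a genuine gap. The paper does not use the decomposition $\alpha(\V^\pm)\subset\alpha(\U^\rho_{K_c})\subset\X^\rho_{K_c}$. Instead it invokes \cite[Theorem~6.3]{pollackrubin04} in one stroke to get
\[
\Char_{\Lambda_{\OO_p}(\Gamma)}\bigl(\X^\rho_{K_c}/\alpha(\V^\pm)\bigr)=\Char_{\Lambda_{\OO_p}(\Gamma)}\bigl(\U^\rho_{K_c}/(\V^\pm+\C^\rho_{K_c})\bigr),
\]
with Rubin's main conjecture already built in. Your filtration is problematic for exactly the reason you flag at the end but do not resolve: the inner quotient $\alpha(\U)/\alpha(\V^\pm)$ equals $\U/(\V^\pm+\E)$, not $\U/\V^\pm$, and the global units $\E$ are not pseudo-null. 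Likewise $\X^\rho_{K_c}/\alpha(\U^\rho_{K_c})$ is the $\rho$-part of the Iwasawa ideal-class module, which is not ``trivialised'' by the class-number-$1$ hypothesis on $K$; Rubin's theorem identifies its characteristic ideal with $\Char(\E^\rho_{K_c}/\C^\rho_{K_c})$, not directly with a $p$-adic $L$-function. Reconciling your two pieces to recover $\U/(\V^\pm+\C)$ is possible but is precisely what the cited theorem from \cite{pollackrubin04} packages for you.

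More importantly, the paper's argument contains a concrete computation you only gesture at. From Corollary~\ref{inteim} one knows $\Hom(H^1_f(\QQ_{c,p},T_f(1))^\pm,\Zp)$ is free of rank one over $\Lambda(\Gamma)$, with an explicit generator $f_\pm$ characterised by
\[
\sum_{\sigma\in G_n}f_\pm\bigl(\exp_{n,1}(\gamma_{n,1}(\eta_1^\pm)^\sigma)\bigr)\sigma\equiv p^{s^\pm}\log_{p,k}^\pm\pmod{\gamma^{p^{n-1}}-1}.
\]
After base change to $K_c$, writing $\mu^\pm$ for the image of $f_\pm$ and $\vartheta^\pm$ for the image of the elliptic-unit generator $\xi$, one has $\vartheta^\pm=h^\pm\mu^\pm$ for some $h^\pm\in\Lambda_{\OO_p}(\Gamma)$, and $\U^\rho_{K_c}/(\V^\pm+\C^\rho_{K_c})\cong\Lambda_{\OO_p}(\Gamma)/(h^\pm)$. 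The element $h^\pm$ is then determined by evaluating both sides at the test vectors $x=\exp_{n,1}(\gamma_{n,1}(\eta_1^\pm))$ for characters $\theta$ of every conductor $p^n$: Proposition~\ref{law} computes the $\vartheta^\pm$-side as an explicit $L$-value times a Gauss sum and a power of $\vp$, the displayed congruence handles the $\mu^\pm$-side, and matching against the interpolation formula for $L_p^\pm$ forces $\theta(h^\pm)=p^{-s^\pm}\theta(L_p^\pm)$ for infinitely many $\theta$, hence $h^\pm=p^{-s^\pm}L_p^\pm$. This character-by-character identification is what actually produces the constant $p^{-s^\pm}$, and it is the step missing from your sketch.
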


\begin{proof}
By the above isomorphism and \cite[Theorem 6.3]{pollackrubin04}, we have:
\begin{eqnarray*}
&&\Char_{\Lambda_{\OO_p}(\Gamma)}\left(\Hom_\OO\left(\Sel_p^\pm(f/K_c),K_p/\OO_p\right)\right)\\
&=&\Char_{\Lambda_{\OO_p}(\Gamma)}\left(\X_{K_c}^\rho/\alpha(\V^\pm)\right)\\
&=&\Char_{\Lambda_{\OO_p}(\Gamma)}\left(\U^\rho_{K_c}/(\V^\pm+\C^\rho_{K_c})\right).
\end{eqnarray*}

By Corollary \ref{trivialchar}, the quotient $H^1(\QQ_{c,p},T_{\bar{f}}(k-1))/H^1_\pm(\QQ_{c,p},T_{\bar{f}}(k-1))$ is free of rank one over $\Lambda(\Gamma)$. Hence, by \eqref{defn} and the proofs of Lemma~\ref{inte} and Corollary~\ref{trivialchar}, the $\Lambda(\Gamma)$-module $\Hom\left(H^1_f(\QQ_{c,p},T_f(1))^\pm,\ZZ_p\right)$ is also free of rank one and it has a generator $f_\pm$ such that
\begin{equation}
\sum_{\sigma\in G_n}f_\pm(\exp_{n,1}(\gamma_{n,1}(\eta^\pm_1)^\sigma))\sigma\equiv p^{s^\pm}\log_{p,k}^\pm\mod(\gamma^{p^{n-1}}-1)\label{inter}
\end{equation}
Note that we have abused notation by writing $\exp_{n,1}(\gamma_{n,1}(\eta^\pm_1))$ for its image in $H^1(\Qp^{(n-1)},T_f(1))$ under the corestriction.

As in \cite[Theorems 7.1 and 7.2]{pollackrubin04}, we have
\begin{eqnarray*}
\Hom\left(H^1_f(\QQ_{c,p},A)^\pm,\QQ_p/\ZZ_p\right)&\cong&\Hom\left(H^1_f(\QQ_{c,p},T_f(1))^\pm,\ZZ_p\right),\\
\Hom_\OO\left(H^1_f(K_{c,p},A)^\pm,K_p/\OO_p\right)&\cong&\Hom\left(H^1_f(\QQ_{c,p},A)^\pm,\QQ_p/\ZZ_p\right)\otimes\OO_p.
\end{eqnarray*} 

Let $\mu^\pm$ and $\vartheta^\pm$ be the images of $f_\pm$ and $\xi$ from Proposition~\ref{law} in $\Hom_\OO\left(H^1_f(K_{c,p},A)^\pm,K_p/\OO_p\right)$ respectively. Then $\vartheta^\pm=h^\pm\mu^\pm$ for some $h^\pm\in\Lambda_{\OO_p}(\Gamma)$. As in \cite[proof of Theorem 7.2]{pollackrubin04}, there is an isomorphism $\U^\rho_{K_c}/(\V^\pm+\C^\rho_{K_c})\cong\Lambda_{\OO_p}(\Gamma)/h^\pm\Lambda_{\OO_p}(\Gamma)$. Hence we have:
\[
\Char_{\Lambda_{\OO_p}(\Gamma)}\left(\Hom_\OO\left(\Sel_p^\pm(f/K_c),K_p/\OO_p\right)\right)=h^\pm\Lambda_{\OO_p}(\Gamma).
\]

Let $F$ be a finite extension of $K$ contained in $K_c$, $\theta$ a character of $G$, the Galois group of $F$ over $K$, $x\in H^1_f(F_p,A)$, $r$ and integer, then $\vartheta^\pm=h^\pm\mu^\pm$ implies
\begin{equation}
\sum_{\sigma\in G}\theta(\sigma)\vartheta^\pm(x^\sigma\otimes p^{-r})=\theta(h^\pm)\sum_{\sigma\in G}\theta(\sigma)\mu^\pm(x^\sigma\otimes p^{-r})\label{compare}
\end{equation}

We now take $x=\exp_{n,1}(\gamma_{n,1}(\eta^\pm_1))$. By (\ref{inter}), the RHS of (\ref{compare}) is just $p^{-r+s^\pm}\theta(h^\pm)\theta(\log_{p,k}^\pm)$.  Then, \eqref{rec} implies that the LHS of (\ref{compare}) equals to the following:
\[
p^{-r}\frac{L(f_{\theta^{-1}},1)}{\Omega_f^\delta}\left[\sum_{\sigma\in G}\theta(\sigma)\gamma_{n,1}(\eta^\pm_1)^\sigma,\bar{\omega}_{-1}\right]
\]
 where $\delta=\theta(-1)$. We now compute $\sum_{\sigma\in G}\theta(\sigma)\gamma_{n,1}(\eta^\pm_1)^\sigma$.

Take $F$ to be $K_p^{(n-1)}$ and $\theta$ a character of conductor $p^{n}$. Then
\begin{eqnarray*}
\sum_{\sigma\in G}\theta(\sigma)\gamma_{n,1}(\eta^\pm)^\sigma&=&\sum_{\sigma\in G}\frac{\theta(\sigma)}{p^{n}}\left(\sum_{i=0}^{n-1}\zeta_{p^{n-i}}^\sigma\otimes\vp^{i-n}(\eta^\pm_1)+(1-\vp)^{-1}(\eta^\pm_1))\right)\\
&=&p^{-n}\sum_{\sigma\in G}\theta(\sigma)\zeta_{p^n}^\sigma\otimes\vp^{-n}(\eta_1^\pm)\\
&=&p^{-n}\tau(\theta)\vp^{-n}(\eta^\pm_1)
\end{eqnarray*} 
where $\tau(\theta)$ denotes the Gauss sum of $\theta$. Since $\vp^2+\epsilon(p)p^{k-3}=0$ on $\DD(V_f(1))$, we have
\begin{eqnarray*}
\vp^{-n}(\eta_1^-)&=&(-\epsilon(p)p^{k-3})^{\frac{-n-1}{2}}p^{-1}\vp(\omega)_1/[\vp(\omega),\bar{\omega}]\ \ (\text{for } n\ \textrm{odd}),\\
\vp^{-n}(\eta_1^+)&=&(-\epsilon(p)p^{k-3})^{\frac{-n}{2}}\vp(\omega)_1/[\vp(\omega),\bar{\omega}]\ \ (\text{for }n\ \textrm{even}).
\end{eqnarray*}
Hence, (\ref{compare}) implies:
\begin{eqnarray*}
p^{s^-}\theta(h^-)\theta(\log_{p,k}^-)&=&(-\epsilon(p)p^{k-1})^{\frac{-n-1}{2}}\tau(\theta)\frac{L(f_{\theta^{-1}},1)}{\Omega_f^\delta}\ \ (\text{for }n\ \textrm{odd}),\\
p^{s^+}\theta(h^+)\theta(\log_{p,k}^+)&=&(-\epsilon(p)p^{k-1})^{\frac{-n}{2}}\tau(\theta)\frac{L(f_{\theta^{-1}},1)}{\Omega_f^\delta}\ \ (\text{for }n\ \textrm{even}).
\end{eqnarray*}Therefore, by the interpolating properties of $L_p^\pm$ at these characters, we have:
\begin{eqnarray*}
p^{s^-}\theta(h^-)&=&\theta(L_p^-)\ \ (\text{for }n\ \textrm{odd}),\\
p^{s^+}\theta(h^+)&=&\theta(L_p^+)\ \ (\text{for }n\ \textrm{even}).
\end{eqnarray*}
But $h^\pm$ and $L_p^\pm$ are both $O(1)$ and the above holds for infinitely many $n$, so $h^\pm=p^{-s^\pm}L_p^\pm$. Hence we are done.
\end{proof}

By taking $\mathcal{G}$-invariants, we have the following.
\begin{corollary}
$\Char_{\Lambda(\Gamma)}\left(\Sel_p^\pm(f/\QQ_c)^\vee\right)=\left(p^{-s^\pm}L_p^\pm\right)$.
\end{corollary}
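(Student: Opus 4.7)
My plan is to deduce the corollary by taking $\mathcal{G} = \Gal(K/\QQ)$-invariants of the equality in the preceding theorem, combined with a descent argument. The first step is to identify $\Sel_p^\pm(f/\QQ_c) \cong \Sel_p^\pm(f/K_c)^{\mathcal{G}}$. This is essentially the observation in the paragraph preceding the theorem: inflation-restriction for $K_c/\QQ_c$ with Galois group $\mathcal{G}$ of order $2$ (prime to $p$, since $p$ is odd) gives $H^1(\QQ_c,A) \cong H^1(K_c,A)^{\mathcal{G}}$ and similarly at the local level, and since the $\pm$-local conditions come from Coleman maps defined over $\QQ_p$ they are $\mathcal{G}$-equivariant, so the isomorphism respects the Selmer subgroups.

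Next, I would pass to Pontryagin duals. In general $(M^{\mathcal{G}})^\vee \cong (M^\vee)_{\mathcal{G}}$, but since $|\mathcal{G}|$ is invertible in $\ZZ_p$ the natural map $(M^\vee)_{\mathcal{G}} \to (M^\vee)^{\mathcal{G}}$ is an isomorphism, giving
\[
\Sel_p^\pm(f/\QQ_c)^\vee \cong (\Sel_p^\pm(f/K_c)^\vee)^{\mathcal{G}}
\]
as $\Lambda(\Gamma)$-modules. To translate the theorem's statement (which uses $\Hom_\OO(-,K_p/\OO_p)$) to the $\ZZ_p$-Pontryagin dual, I would use the fact that $p$ is inert in $K$ in the supersingular CM case, so $K_p/\QQ_p$ is unramified; the trace pairing then identifies $\Hom_{\ZZ_p}(\OO_p,\QQ_p/\ZZ_p) \cong K_p/\OO_p$ as $\OO_p[\mathcal{G}]$-modules. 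For any $\OO_p$-module $M$ this yields $M^\vee \cong \Hom_\OO(M,K_p/\OO_p)$ compatibly with the $\Lambda_{\OO_p}(\Gamma)$-structure and $\mathcal{G}$-action, so the theorem rewrites as
\[
\Char_{\Lambda_{\OO_p}(\Gamma)}\bigl(\Sel_p^\pm(f/K_c)^\vee\bigr) = \bigl(p^{-s^\pm}L_p^\pm\bigr).
\]

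The final step is Galois descent. The $\mathcal{G}$-action on $\Lambda_{\OO_p}(\Gamma)$ (conjugation on $\OO_p$, trivial on $\Gamma$) has fixed ring $\Lambda(\Gamma)$, and since $|\mathcal{G}|$ is invertible, for any $\Lambda_{\OO_p}(\Gamma)$-module $M$ with compatible $\mathcal{G}$-action the natural map $M^{\mathcal{G}}\otimes_{\Lambda(\Gamma)}\Lambda_{\OO_p}(\Gamma) \to M$ is an isomorphism, so
\[
\Char_{\Lambda_{\OO_p}(\Gamma)}(M) = \Char_{\Lambda(\Gamma)}\bigl(M^{\mathcal{G}}\bigr)\cdot \Lambda_{\OO_p}(\Gamma).
\]
Applying this to $M = \Sel_p^\pm(f/K_c)^\vee$ and combining with the previous identifications, and noting that $p^{-s^\pm}L_p^\pm$ already lies in $\Lambda(\Gamma)$ (since $f$ is defined over $\QQ$ by assumption (3), so $L_p^\pm \in \Lambda(\Gamma)$), faithfully flat descent along $\Lambda(\Gamma) \hookrightarrow \Lambda_{\OO_p}(\Gamma)$ gives the desired equality. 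The most delicate point is verifying that all these identifications are $\mathcal{G}$-equivariant, which reduces to the fact that the $\OO_p$-structure on $A \cong K_p/\OO_p$ is compatible with the conjugation action of $\mathcal{G}$ on $\OO_p$; this is essentially automatic once one observes that the CM character $\rho$ and its conjugate $\tau\rho\tau^{-1}$ are the two components of the $G_\QQ$-representation on $V_f$.
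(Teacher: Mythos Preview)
Your proposal is correct and follows the same approach as the paper: the paper's entire proof is the single phrase ``By taking $\mathcal{G}$-invariants,'' relying on the identification $\Sel_p^\pm(f/\QQ_c)\cong\Sel_p^\pm(f/K_c)^{\mathcal{G}}$ already noted in the paragraph preceding the theorem. You have simply spelled out in detail what that one line entails (the inflation--restriction isomorphism, the compatibility of Pontryagin duals with $\mathcal{G}$-invariants since $|\mathcal{G}|=2$ is prime to $p$, the passage between $\Hom_\OO(-,K_p/\OO_p)$ and the $\Zp$-dual, and the descent of characteristic ideals along $\Lambda(\Gamma)\hookrightarrow\Lambda_{\OO_p}(\Gamma)$).
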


\bibliographystyle{amsalpha}  
\bibliography{references}
\end{document}